\definecolor{darkblue}{rgb}{0.0,0.0,0.3}
\theoremstyle{plain}
\newtheorem{thm}{Theorem}[section]
\newtheorem{cor}[thm]{Corollary}
\newtheorem{prop}[thm]{Proposition}
\newtheorem{lem}[thm]{Lemma}
\theoremstyle{definition}
\newtheorem{defn}[thm]{Definition}
\newtheorem{example}[thm]{Example}
\newtheorem{rem}[thm]{Remark}
\numberwithin{equation}{section}
\newcommand{\B}{{\mathcal{B}}}
\newcommand{\K}{{\mathcal{K}}}
\renewcommand{\P}{{\mathcal{P}}}
\newcommand{\bC}{{\mathbb{C}}}
\newcommand{\bK}{{\mathbf{K}}}
\newcommand{\bL}{{\mathbf{L}}}
\newcommand{\bN}{{\mathbf{N}}}
\newcommand{\bR}{{\mathbf{R}}}
\newcommand{\fb}{\partial_F G}
\newcommand{\Ad}{\operatorname{Ad}}
\newcommand{\Aut}{\operatorname{Aut}}
\newcommand{\id}{\operatorname{id}}
\newcommand{\Id}{\operatorname{Id}}
\newcommand{\pr}{\operatorname{pr}}
\newcommand{\Glimm}{\operatorname{Glimm}}
\newcommand{\Prim}{\operatorname{Prim}}
\newcommand{\Sub}{\operatorname{Sub}}
\newcommand{\hull}{\operatorname{hull}}
\newcommand{\supp}{\operatorname{supp}}
\begin{document}
\title[The ideal structure of reduced crossed products]{Noncommutative boundaries and the ideal structure of reduced crossed products}

\author[M. Kennedy]{Matthew Kennedy}
\address{Department of Pure Mathematics\\ University of Waterloo\\
Waterloo, Ontario \; N2L 3G1 \\Canada}
\email{matt.kennedy@uwaterloo.ca}

\author[C. Schafhauser]{Christopher Schafhauser}
\address{Department of Pure Mathematics\\ University of Waterloo\\
Waterloo, Ontario \; N2L 3G1 \\Canada}
\email{cschafhauser@uwaterloo.ca}

\begin{abstract}
A C*-dynamical system is said to have the ideal separation property if every ideal in the corresponding crossed product arises from an invariant ideal in the C*-algebra. In this paper we characterize this property for unital C*-dynamical systems over discrete groups. To every C*-dynamical system we associate a ``twisted'' partial C*-dynamical system that encodes much of the structure of the action. This system can often be ``untwisted,'' for example when the algebra is commutative, or when the algebra is prime and a certain specific subgroup has vanishing Mackey obstruction. In this case, we obtain relatively simple necessary and sufficient conditions for the ideal separation property. A key idea is a notion of noncommutative boundary for a C*-dynamical system that generalizes Furstenberg's notion of topological boundary for a group.
\end{abstract}

\subjclass[2010]{Primary 46L35; Secondary 45L55, 47L65, 43A65}
\keywords{C*-dynamical system, reduced crossed product, ideal structure, noncommutative boundary}
\thanks{First author supported by NSERC Grant Number 418585.}

\maketitle

\tableofcontents
\addtocontents{toc}{\setcounter{tocdepth}{2}}

\section{Introduction}

A C*-dynamical system is a triple $(A,G,\alpha)$ consisting of a unital C*-algebra $A$, a countable discrete group $G$ and an action of $G$ on $A$ in the form of a group homomorphism $\alpha: G \to \Aut(A)$. In this paper, we consider the ideal structure of the corresponding reduced crossed product $A \times_r G$.

The ideal structure of $A \times_r G$ is as nice as possible when every ideal in $A \times_r G$ arises from an invariant ideal in $A$, in the sense that the intersection map $J \to A \cap J$ is injective on the ideals of $A \times_r G$. In this case, we say that $(A,G,\alpha)$ has the {\em ideal separation property}. The main result in this paper is a characterization of this property for a large class of noncommutative C*-dynamical systems in terms of the dynamics of $(A,G,\alpha)$.

It is convenient to characterize the ideal separation property in terms of a weaker property. We say that $(A,G,\alpha)$ has the {\em intersection property} if every nonzero ideal in $A \times_r G$ has nonzero intersection with $A$. Sierakowski \cite{S2010} showed that if $(A,G,\alpha)$ is exact (which is almost always the case) and every quotient has the intersection property, then $(A,G,\alpha)$ has the ideal separation property if and only if it has the intersection property.

Kawabe \cite{K2017} recently obtained a characterization of the ideal separation property for exact commutative C*-dynamical systems in terms of the intersection property. His results generalize results obtained by the first author and his collaborators \cites{KK2017,BKKO2017,K2015} relating to the simplicity of reduced C*-algebras of groups.

Significant difficulties arise in the study of noncommutative dynamics that have no analogue in commutative dynamics, in particular because of the existence of non-trivial inner automorphisms. A key point in this paper is a new method for handling these difficulties by considering a partial C*-dynamical system arising from the injective envelope of the C*-algebra. The theory of partial C*-dynamical systems has been extensively developed by Exel and his collaborators, and we point out some connections with their work.

Using the partial C*-dynamical system arising from the injective envelope, we identify a specific obstruction to understanding the ideal structure of the reduced crossed product. And fortunately, it turns out that it is often possible to remove this obstruction by ``untwisting'' it. In this case, we say that the corresponding C*-dynamical system has {\em vanishing obstruction}.

If $A$ is prime, then $(A,G,\alpha)$ has vanishing obstruction precisely when the familiar Mackey obstruction $H^2(G_A,\mathbb{T})$ vanishes for the subgroup $G_A = \{s \in G : \alpha_s \text{ is quasi-inner} \}$. This is the subgroup of elements in $G$ that are ``almost inner'' in a certain precise sense. This happens in many cases, for example, if either $G_A$ is free or $G_A$ is finite and every Sylow subgroup is cyclic. In particular, it vanishes if $G_A$ is cyclic.

The starting point in our paper is the observation that for C*-dynamical systems with vanishing obstruction, the intersection property can be completely characterized in terms of the dynamics of the action. The precise statement of the characterization splits into two cases, depending on whether the group is amenable or not.

In the amenable case, since $(A,G,\alpha)$ has vanishing obstruction if $A$ is abelian, we obtain a significant generalization of a result of Kawamura and Tomiyama \cite{KT1990}*{Theorem 4.1} (see also \cite{AS1994}*{Theorem 2}).

\begin{thm}
Let $(A,G,\alpha)$ be a C*-dynamical system such that $G$ is amenable. If $(A,G,\alpha)$ is properly outer, then $(A,G,\alpha)$ has the intersection property. If $(A,G,\alpha)$ has vanishing obstruction, then the converse is also true.
\end{thm}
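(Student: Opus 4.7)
The plan is to prove the two implications separately, using the injective-envelope and partial-action framework that the introduction has developed.

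\textbf{Forward direction.} Since $G$ is amenable, the reduced and full crossed products agree and the canonical conditional expectation $E \colon A \times_r G \to A$ is faithful. Given a nonzero ideal $J \subseteq A \times_r G$ and a nonzero positive $x \in J$ with Fourier coefficients $a_s = E(x\lambda_s^*)$ and $a_e \neq 0$, I would exploit proper outerness --- transferred to the injective envelope, where the $G$-action is better behaved --- to produce, for each prescribed finite set $F \subseteq \supp(x) \setminus \{e\}$ and each $\varepsilon > 0$, a positive contraction $b \in A$ with $\|b a_e b\| \geq \|a_e\| - \varepsilon$ and $\|b a_s \alpha_s(b)\| < \varepsilon$ for every $s \in F$. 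This is a Kishimoto / Olesen--Pedersen style spectral-subspace argument. A diagonalization then yields a sequence $b_n x b_n \in J$ converging to a nonzero element of $A$, giving the intersection property.

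\textbf{Reverse direction.} Arguing by contraposition, suppose $(A,G,\alpha)$ is not properly outer. Then there is a nonzero $\alpha$-invariant ideal $I \subseteq A$ and an element $s \in G \setminus \{e\}$ such that the automorphism induced by $\alpha_s$ on an appropriate prime quotient of $A/I$ is ``inner-like'' --- concretely, implemented by a unitary in the injective envelope of the quotient. Under the vanishing-obstruction hypothesis the associated twisted partial system untwists on that quotient, yielding a subgroup $H \leq G$ of such elements together with a genuine group homomorphism $s \mapsto u_s$ into the unitary group of some $G$-invariant intermediate algebra, so that $s \mapsto u_s^* \lambda_s$ is a representation of $H$ commuting with a copy of $A/I$ in the crossed product. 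The ideal generated by $u_s^* \lambda_s - 1$ for a single $s \in H \setminus \{e\}$ is nonzero; since the elements $u_s^* \lambda_s$ span a copy of $C^*_r(H)$ disjoint from $A/I$, this ideal meets $A/I$ trivially. Pulling back yields an ideal of $A \times_r G$ whose intersection with $A$ equals $I$, contradicting the intersection property for the quotient $(A/I, G, \alpha)$, which inherits it from $(A,G,\alpha)$.

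\textbf{Main obstacle.} The bulk of the work is in the reverse direction. The delicate point is that failure of proper outerness only supplies implementing unitaries in the injective envelope or in a local multiplier algebra, not in $A$ itself, so assembling them into an honest group representation --- rather than merely a projective one --- requires precisely the vanishing of the Mackey obstruction $H^2(H,\mathbb{T})$. Tracking how the resulting ideal of the injective-envelope crossed product (or of an intermediate untwisted partial crossed product) descends to an ideal of $A \times_r G$ that really misses $A$ is the technical heart of the argument; the rest is a careful unpacking of what ``properly outer'' and ``vanishing obstruction'' mean on each invariant piece of $A$.
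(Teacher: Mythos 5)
Your forward direction is a legitimate alternative to the paper's argument: the paper deduces it from the fact that proper outerness of $(A,G,\alpha)$ forces proper outerness of $(I_G(A),G)$, hence uniqueness (and faithfulness) of the pseudo-expectation, hence the intersection property via Theorem \ref{thm:intersection-property-iff-every-equivariant-pseudo-expectation-faithful}; your Kishimoto/Olesen--Pedersen orthogonalization route is the classical proof of the same implication (it is Sierakowski's observation, reproved in the paper as Corollary \ref{cor:properly-outer-implies-intersection-property}, and needs no amenability). The only caveat is that the norm estimates you invoke are really tied to Elliott's notion of proper outerness; since Kishimoto's notion (the one adopted in the paper) implies Elliott's, this is fine, but it is a nontrivial input you should cite rather than wave at.

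The reverse direction, however, has a genuine gap, and it sits exactly at your last step. You build an ideal of $A\times_r G$ whose intersection with $A$ equals the nonzero invariant ideal $I$; such an ideal does \emph{not} contradict the intersection property of $(A,G,\alpha)$, and your attempt to repair this by saying that $(A/I,G,\alpha)$ ``inherits'' the intersection property from $(A,G,\alpha)$ is false: the intersection property does not pass to quotients by invariant ideals (that passage is precisely the strictly stronger \emph{residual} intersection property; e.g.\ $\mathbb{Z}$ acting on the one-point compactification of $\mathbb{Z}$ is topologically free, hence has the intersection property, yet the quotient action on the fixed point does not). There is a second, related problem: the assertion that the ideal generated by $u_s^*\lambda_s-1$ meets the coefficient algebra trivially does not follow from the elements $u_s^*\lambda_s$ spanning a copy of $C^*_r(H)$; one must actually exhibit an equivariant conditional expectation that is the identity on the coefficient algebra and kills $u_s^*\lambda_s-p_s$, so that its left kernel is an ideal missing the algebra. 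This is what the paper does (Lemma \ref{lem:canonical-non-canonical-conditional-expectation}, run with $I(A)$ in place of $I_G(A)$): vanishing obstruction gives a covariant pair $(\sigma_z,\sigma_z\circ u)$ over each Glimm fiber, amenability of $G$ (hence of the quasi-stabilizers) lets one integrate it against the reduced crossed product, and the resulting non-faithful expectation shows $(I(A),G)$ fails the intersection property; one then descends to $(A,G)$ not through quotients but through Theorem \ref{thm:intersection-property-iff-extensions-have-it}, which transfers the intersection property between $A$ and intermediate algebras inside $I_G(A)$. Note also that failure of proper outerness gives quasi-innerness of some $\alpha_s$ on an \emph{ideal} (equivalently $p_s\neq 0$ in $I(A)$), not on a quotient of $A$, so the whole construction has to live over the central projections $p_s$ of the injective envelope rather than over a quotient $A/I$.
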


In the non-amenable case, it is necessary to consider the injective hull of a C*-dynamical system, and we obtain a significant generalization of the characterization of C*-simplicity established by Kalantar and the first author \cite{KK2017}.

\begin{thm}
Let $(A,G,\alpha)$ be a C*-dynamical system such that $G$ is non-amenable. If the injective hull $(I_G(A),G,I_G(\alpha))$ of $(A,G,\alpha)$ is properly outer, and in particular if $(A,G,\alpha)$ is properly outer, then $(A,G,\alpha)$ has the intersection property. If $(A,G,\alpha)$ has vanishing obstruction, then the converse is also true. 
\end{thm}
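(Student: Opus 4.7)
The plan is to generalize the Kalantar--Kennedy approach to C*-simplicity \cite{KK2017}, with $I_G(A)$ playing the role of a noncommutative Furstenberg boundary, and to handle the noncommutative subtleties via the twisted partial C*-dynamical system developed earlier in the paper.

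For the forward implication, assume $(I_G(A), G, I_G(\alpha))$ is properly outer. The $G$-equivariant inclusion $A \hookrightarrow I_G(A)$ induces an inclusion $A \times_r G \hookrightarrow I_G(A) \times_r G$ intertwining the canonical conditional expectations $E_A$ and $E$. I would establish a Kishimoto-type averaging principle in $I_G(A) \times_r G$: proper outerness of $I_G(\alpha)$ forces $E(x)$ to lie in the norm-closed convex hull of the unitary conjugates of $x$ by elements of $U(I_G(A))$. Given a nonzero ideal $J$ in $A \times_r G$, choose a positive $x \in J$ with $E_A(x) \neq 0$ and average inside the ideal of $I_G(A) \times_r G$ generated by $J$ to push $x$ arbitrarily close to $E_A(x) \in A$. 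A final step, invoking $G$-essentiality of the inclusion $A \subset I_G(A)$ (the defining property of the $G$-injective hull), produces an honest nonzero element of $J \cap A$.

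For the converse, I assume vanishing obstruction together with the intersection property, and argue contrapositively. If $I_G(\alpha)$ is not properly outer, then some $s \in G \setminus \{e\}$ and some nonzero $G$-invariant ideal $L \subset I_G(A)$ witness an inner automorphism modulo $L$. The twisted partial C*-dynamical system on $I_G(A)$ records this failure through a Mackey-type $2$-cocycle, and vanishing obstruction is precisely the hypothesis that allows this cocycle to be trivialized, yielding coherent honest implementing unitaries $w_s \in I_G(A)$. The elements $w_s^* u_s$ are then unitaries in $I_G(A) \times_r G$ commuting with $I_G(A)$ on the support of $L$; choosing a scalar in their spectrum produces a proper $G$-invariant ideal of $I_G(A) \times_r G$ which, pulled back through the inclusion and using essentiality once more, yields a nonzero ideal of $A \times_r G$ whose intersection with $A$ is zero, contradicting the intersection property.

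The main obstacle is the converse: the untwisting provided by vanishing obstruction must be carried out compatibly with the descent from $I_G(A) \times_r G$ back to $A \times_r G$, so that the ideal constructed from the implementing unitaries is simultaneously nonzero in $A \times_r G$ and disjoint from $A$. The forward implication, though technically demanding, should largely follow the Kishimoto--Kalantar--Kennedy averaging template once the canonical expectations on the two crossed products are matched up and the $G$-essentiality of the inclusion is exploited.
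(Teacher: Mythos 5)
Your forward direction is workable but is not the paper's route, and it glosses over the one step that actually requires care. A Kishimoto-type averaging argument on $I_G(A) \times_r G$ does show that $(I_G(A),G)$ has the intersection property when it is properly outer; the paper instead gets this from the pseudo-expectation machinery (Theorems \ref{thm:properly-outer-implies-unique-pseudo-expectation} and \ref{thm:intersection-property-iff-every-equivariant-pseudo-expectation-faithful}), which avoids averaging and any separability issues. But note that ``pushing $x$ arbitrarily close to $E_A(x)$'' only yields the quotient-norm estimate $\|x+J\| \geq \|E(x)\|$, not an element of $J \cap A$; and the descent from the intersection property for $I_G(A) \times_r G$ to that for $A \times_r G$ is precisely the content of the cited Kawabe--Bryder result (Theorem \ref{thm:intersection-property-iff-extensions-have-it}), not a soft consequence of essentiality.

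The converse is where the proposal has a genuine gap. Failure of proper outerness of $I_G(\alpha)_s$ produces only a nonzero central projection $q_s$ (varying with $s$, not $G$-invariant) on which $I_G(\alpha)_s$ is inner, and vanishing obstruction untwists the implementing unitaries only on the corners $I(A)p_s$ of $I(A)$ --- there is no global untwisting and no tensor-product picture. Consequently ``choosing a scalar in the spectrum'' of the elements $w_s^*\lambda_s$ does not produce an ideal: the C*-algebra they generate can be simple, and even when it is not, an ideal of a commuting subalgebra does not give an ideal of the crossed product meeting $I_G(A)$ trivially outside a genuine tensor decomposition. More fundamentally, you never explain why the non-faithful object you want exists on the \emph{reduced} crossed product. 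The paper's key input here is Proposition \ref{prop:quasi-stabilizer-for-injective-is-amenable}: for each $z \in \Glimm(I_G(A))$ the quasi-stabilizer $G_z = \{s : z \in \supp(q_s)\}$ is \emph{amenable}. Amenability is what lets the covariant pair $(\sigma_z, \sigma_z \circ u)$ --- a priori only a representation of the full crossed product $I(A) \times G_z$ --- be composed with the conditional expectation onto $I(A) \times_r G_z$ and glued over $\Glimm(I_G(A))$ into an equivariant conditional expectation $\phi$ with $\phi(b\lambda_s) = b u_s q_s$ (Lemma \ref{lem:canonical-non-canonical-conditional-expectation}); non-faithfulness is then witnessed by $\phi\bigl(q_s(u_s-\lambda_s)^*(u_s-\lambda_s)\bigr)=0$ and Theorem \ref{thm:intersection-property-iff-every-equivariant-pseudo-expectation-faithful} finishes. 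Without the amenability of the quasi-stabilizers and the Glimm decomposition, the map $\lambda_s \mapsto u_s q_s$ need not be completely positive on the reduced crossed product, and the construction collapses --- indeed, the globally inner picture your argument implicitly relies on cannot occur on $I_G(A)$ for non-amenable $G$ precisely because of this proposition. Finally, vanishing obstruction is a hypothesis about $I(A)$, not $I_G(A)$; transporting the partial representation $u: G \to I(A)$ into the construction on $I_G(A) \times_r G$ uses Theorem \ref{thm:prop-outer-implies-ess-prop-outer} ($q_s \leq p_s$ and conjugation-equivariance of the $p_s$), another step absent from the proposal.
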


The notion of proper outerness for a C*-dynamical system is a noncommutative generalization of the notion of topological freeness from topological dynamics. Proper outerness was defined for von Neumann algebras by Kallmann \cite{K1969}, and for general C*-algebras by Elliott \cite{E1980}. We adopt Kishimoto's definition \cite{K1982} of proper outerness in terms of the Borchers spectrum, which is equivalent to Elliott's definition in the separable setting. We are grateful to the anonymous referees for bringing the recent paper \cite{KM2016} to our attention, which considers generalizations of these notions for Fell bundles.

A key point in our paper is a more dynamical characterization of proper outerness in terms of what we call {\em quasi-stabilizer subgroups}. Let $\Glimm(A)$ denote the space of Glimm ideals in $A$, which can be identified with the spectrum of the center of $A$. We identify a class of ideals in $A$ that we call {\em pseudo-Glimm ideals} since they behave in many ways like Glimm ideals. Let $\Glimm_p(A)$ denote the space of pseudo-Glimm ideals in $A$. It turns out that $\Glimm_p(A)$ is compact and Hausdorff in the strong topology (or Fell topology) on the set of ideals of $A$. In many cases $\Glimm_p(A)$ and $\Glimm(A)$ coincide, for example if $A$ has Hausdorff primitive ideal space. But it is possible for these spaces to differ.

For a pseudo-Glimm ideal $I \in \Glimm_p(A)$, the corresponding quasi-stabilizer subgroup $G_I$ consists of the elements in $s \in G$ such that the automorphism $\alpha_s$ is almost inner on the quotient $A/I$ in a certain precise sense. Quasi-stabilizer subgroups are a kind of noncommutative generalization of neighborhood stabilizer subgroups from topological dynamics. We show that they completely charaterize proper outerness, in the sense that $(A,G,\alpha)$ is properly outer if and only if for every pseudo-Glimm ideal $I \in \Glimm_p(A)$, the corresponding quasi-stabilizer $G_I$ is trivial.

With a great deal more effort, we obtain a more intrinsic characterization of the intersection property for C*-dynamical systems with vanishing obstruction, i.e. a characterization that does not refer to the injective hull. Before stating this result, we require some terminology.

We equip the space $\Glimm_p(A)$ of pseudo-Glimm ideals of $A$ with the strong topology and let $\operatorname{Sub}(G)$ denote the compact Hausdorff space of subgroups of $G$ equipped with the Chabauty topology. We then consider the action of $G$ on the space $\Glimm_p(A) \times \Sub(G)$  equipped with the product topology. An invariant closed subset $X \subseteq \Glimm_p(A) \times \operatorname{Sub}(G)$ is said to be {\em covering for $(A,G,\alpha)$} if
\begin{enumerate}
\item The ideals in $\pr_1(X)$ cover $X$ in a certain specific sense, where $\pr_1 : \Glimm(A) \times \Sub(G) \to \Glimm(A)$ denotes the projection onto the first coordinate, and
\item For each $(I,H) \in X$, $H \leq G_I$, where $G_I$ denotes the quasi-stabilizer subgroup corresponding to $I$.
\end{enumerate}
We say that an $(A,G,\alpha)$-covering subset $X \subseteq \Glimm_p(A) \times \Sub(G)$ is {\em amenable} if $\pr_2(X) \subseteq \Sub_a(G)$, where  $\pr_2 : \Glimm(A) \times \operatorname{Sub}(G) \to \Sub(G)$ denotes the projection onto the second coordinate and $\operatorname{Sub}_a(G)$ denotes the set of amenable subgroups of $G$. If $\pr_2(X) = \{\{e\}\}$, then we say that $X$ is trivial.

\begin{thm} \label{thm:intro-main}
Let $(A,G,\alpha)$ be a C*-dynamical system. If every amenable $(A,G,\alpha)$-covering subset of $\Glimm_p(A) \times \Sub(G)$ contains a trivial $(A,G,\alpha)$-covering subset, then $(A,G,\alpha)$ has the intersection property. If $(A,G,\alpha)$ has vanishing obstruction, then the converse is also true.
\end{thm}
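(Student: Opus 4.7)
The plan is to reduce the theorem to the injective hull characterization of the intersection property stated earlier by establishing an unconditional equivalence: every amenable $(A,G,\alpha)$-covering subset of $\Glimm_p(A) \times \Sub(G)$ contains a trivial covering subset if and only if the injective hull $(I_G(A), G, I_G(\alpha))$ is properly outer. Granted this equivalence, the sufficiency of the covering condition for the intersection property follows from the unconditional implication ``proper outerness of the injective hull implies intersection property,'' and the converse under vanishing obstruction follows from the corresponding converse in the injective hull theorem.

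To prove that the covering condition implies proper outerness of the injective hull, I would argue contrapositively. Suppose $X \subseteq \Glimm_p(A) \times \Sub(G)$ is an amenable covering subset containing no trivial covering subset, and extract $(I, H) \in X$ with $H \neq \{e\}$ amenable and $H \leq G_I$. Via the canonical $G$-equivariant unital embedding $A \hookrightarrow I_G(A)$, one lifts $I$ to a pseudo-Glimm ideal $J$ of $I_G(A)$ for which $G_J$ still contains $H$, producing a pseudo-Glimm ideal of $I_G(A)$ with nontrivial quasi-stabilizer. By the quasi-stabilizer characterization of proper outerness established earlier in the paper, the injective hull fails to be properly outer.

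Conversely, suppose the injective hull is not properly outer, and fix a pseudo-Glimm ideal $J$ of $I_G(A)$ with nontrivial quasi-stabilizer $G_J$. Push $J$ down to a pseudo-Glimm ideal $I$ of $A$, so that $G_J \leq G_I$. The critical step is to extract from $G_J$ a nontrivial \emph{amenable} subgroup $H$: this is done by a fixed-point or Day-type averaging argument on $\Sub(G)$, exploiting Chabauty compactness to produce an amenable subgroup inside the failure locus. The closure of the $G$-orbit of $(I, H)$ in $\Glimm_p(A) \times \Sub(G)$, after suitable enlargement to satisfy the specific covering axiom for $\pr_1$, yields an amenable covering subset with no trivial covering subset.

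The main obstacle is the back-and-forth between pseudo-Glimm ideals of $A$ and of $I_G(A)$; the enlargement from $\Glimm(A)$ to $\Glimm_p(A)$ is made precisely to render this correspondence topologically well-behaved, and Chabauty compactness of $\Sub(G)$ is essential for the amenable extraction in the converse direction. A subsidiary technical point is verifying the covering axiom for the explicit subsets constructed, which demands careful use of the semicontinuity properties of the quasi-stabilizer map $I \mapsto G_I$ on $\Glimm_p(A)$.
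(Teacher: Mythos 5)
Your reduction rests on the claim that ``every amenable $(A,G)$-covering subset contains a trivial one'' is \emph{unconditionally} equivalent to proper outerness of $(I_G(A),G)$. That equivalence is not available: the direction ``covering fails $\Rightarrow$ $(I_G(A),G)$ not properly outer'' is precisely where the vanishing obstruction hypothesis does its work in the paper, and your proposed proof of it is broken. You extract a single $(I,H)\in X$ with $H\neq\{e\}$, $H\leq G_I$, and then ``lift'' $I$ to a pseudo-Glimm ideal $J$ of $I_G(A)$ with $H\leq G_J$. But quasi-stabilizers \emph{shrink} when passing from $A$ to $I_G(A)$: by Theorem \ref{thm:prop-outer-implies-ess-prop-outer} the relevant projections satisfy $q_s\leq p_s$, and Proposition \ref{prop:continuous-dense-g-delta} gives $G_z\leq G_{A\cap z}$, not the reverse. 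The trivial action of a free group on $\bC$ is a concrete counterexample to your lifting step: there $G_0=G$ for the unique pseudo-Glimm ideal $0$, yet $I_G(\bC)=C(\fb)$ carries a free action and is properly outer. This also shows that the mere existence of one pair $(I,H)$ with $H\neq\{e\}$ (all your argument actually uses) is far weaker than the hypothesis that $X$ contains no trivial covering subset. The paper's route through this direction is entirely different: it uses vanishing obstruction to build, via the partial $*$-representation $u$ and Lemma \ref{lem:intermediate-map}, covariant maps $\phi_{(I,H)}$ for each $(I,H)\in X$, assembles them into a non-canonical matrix-affine boundary of $(MS(A),G)$, hence a non-canonical pseudo-expectation (Theorem \ref{thm:nc-boundary-correspondence-pseudo-expectation}), and only then concludes that $(I_G(A),G)$ is not properly outer via Theorem \ref{thm:properly-outer-implies-unique-pseudo-expectation}.

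Your other direction (not properly outer $\Rightarrow$ an amenable covering subset with no trivial covering subset) is the right idea and is the unconditional half, but the sketch elides the genuinely hard point: the orbit closure of a single $(I,H)$, ``after suitable enlargement,'' may well contain a trivial covering subset, in which case nothing is proved. The paper avoids this by working upstairs in $\Glimm(I_G(A))$, using the dense $G_\delta$ set $L$ on which the stabilizer map $z\mapsto G_z$ is continuous (Proposition \ref{prop:continuous-dense-g-delta}), taking $X$ to be the push-down of the closure of $\{(z,G_z):z\in L\}$, and then invoking Proposition \ref{prop:quasi-glimm-ideals-covering} to show that any trivial covering subset inside $X$ would force $G_z=\{e\}$ for all $z\in L$, contradicting the failure of proper outerness. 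Also note that the amenability of $H$ comes for free from Proposition \ref{prop:quasi-stabilizer-for-injective-is-amenable}; no Day-type averaging is needed. As it stands, both halves of your argument have gaps, and the first half cannot be repaired without bringing in the vanishing obstruction and the pseudo-expectation machinery.
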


If $A = \bC$, then $(A,G,\alpha)$ has vanishing obstruction and the minimal amenable $(A,G,\alpha)$-covering subsets of $\Glimm_p(A) \times \Sub(G)$ are precisely the uniformly recurrent subgroups of $G$, so the above theorem generalizes the characterization of C*-simplicity obtained by the first author \cite{K2015}*{Theorem 4.1}. More generally, if $A$ is commutative, then $(A,G,\alpha)$ has vanishing obstruction and the above theorem reduces to the characterization of the intersection property for commutative C*-dynamical systems obtained by Kawabe \cite{K2017}*{Theorem 5.2}.

If $A$ is prime, then the hypotheses of Theorem \ref{thm:intro-main} are easier to verify.

\begin{cor}
Let $(A,G,\alpha)$ be a C*-dynamical system such that $A$ is prime and let $G_A = \{s \in G : \alpha_s \text{ is quasi-inner} \}$. If every amenable $(A,G)$-covering subset of $\Glimm_p(A) \times \Sub(G_A)$ contains a trivial $(A,G)$-covering subset, then $(A,G,\alpha)$ has the intersection property. If $(A,G,\alpha)$ has vanishing Mackey obstruction $H^2(G_A,\mathbb{T})$, then the converse is also true.
\end{cor}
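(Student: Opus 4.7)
The strategy is to deduce the corollary from Theorem \ref{thm:intro-main} by leveraging two structural facts specific to prime C*-algebras: the equivalence of vanishing obstruction with vanishing Mackey obstruction, and a reduction from covering subsets in $\Sub(G)$ to covering subsets in $\Sub(G_A)$.

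\textbf{The converse direction.} Assume $(A,G,\alpha)$ has the intersection property and $H^2(G_A,\mathbb{T}) = 0$. The introduction records that for prime $A$, $(A,G,\alpha)$ has vanishing obstruction if and only if $H^2(G_A,\mathbb{T}) = 0$; this equivalence is established earlier via the analysis of the twisted partial C*-dynamical system on the injective envelope. By the converse direction of Theorem \ref{thm:intro-main}, every amenable $(A,G,\alpha)$-covering subset of $\Glimm_p(A) \times \Sub(G)$ contains a trivial covering subset. Because $G_A$ is a normal subgroup of $G$, the inclusion $\Sub(G_A) \hookrightarrow \Sub(G)$ is $G$-equivariant and closed, so any amenable $(A,G,\alpha)$-covering subset of $\Glimm_p(A) \times \Sub(G_A)$ is also one of $\Glimm_p(A) \times \Sub(G)$; since triviality is intrinsic, the conclusion follows at once.

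\textbf{The forward direction.} This is the more substantive direction, because the corollary's hypothesis about $\Sub(G_A)$ is a priori weaker than the hypothesis of Theorem \ref{thm:intro-main} about $\Sub(G)$. Given an amenable $(A,G,\alpha)$-covering subset $X \subseteq \Glimm_p(A) \times \Sub(G)$, the plan is to construct from it an amenable $(A,G,\alpha)$-covering subset $X' \subseteq \Glimm_p(A) \times \Sub(G_A)$ — say by setting $X' = \overline{\{(I, H \cap G_A) : (I,H) \in X\}}$, or via a refinement/orbit-closure argument — and then argue that a trivial covering subset inside $X'$ produces a trivial covering subset inside $X$. The replacement $(I,H) \mapsto (I, H\cap G_A)$ respects condition (2) of the covering definition, since quasi-innerness is preserved by quotients, forcing $G_A \leq G_I$ for every pseudo-Glimm ideal $I$ when $A$ is prime; hence $H \cap G_A \leq G_A \leq G_I$. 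Amenability of $\pr_2(X')$ is inherited from amenability of $\pr_2(X)$. For elements $s \in G \setminus G_A$, the automorphism $\alpha_s$ is properly outer by definition of $G_A$, which is what should prevent such elements from being essential to the covering property and so should allow their removal without destroying the covering condition~(1).

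\textbf{The main obstacle} is precisely this reduction in the forward direction: verifying that the proposed $X'$ remains covering in the sense of condition~(1) (that the ideals in $\pr_1(X')$ cover $X'$ in the specified topological sense), and that finding a trivial covering subset within $X'$ can be pulled back to a trivial covering subset within $X$. This is where the specific structure of pseudo-Glimm ideals in a prime C*-algebra, together with the normality of $G_A$ in $G$ and the properness of outer elements, is essential. Once this reduction is in hand, both implications of the corollary follow immediately from the corresponding implications of Theorem \ref{thm:intro-main}.
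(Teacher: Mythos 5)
Your identification of the equivalence between vanishing obstruction and the vanishing of the Mackey obstruction $H^2(G_A,\mathbb{T})$ for prime $A$ is correct and matches the paper (Example \ref{ex:partial-cohomology-2}), and your converse direction goes through. But the forward direction, which you yourself flag as containing ``the main obstacle,'' is left genuinely incomplete: the reduction you propose ($H \mapsto H \cap G_A$, followed by an argument that properly outer elements can be ``removed'' from the covering set) is never carried out. The missing observation that closes the gap --- and which makes the entire corollary a one-line consequence of Theorem \ref{thm:main} --- is that for prime $A$ one has $G_I = G_A$ for \emph{every} pseudo-Glimm ideal $I \in \Glimm_p(A)$, not merely $G_A \leq G_I$ as you note. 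Indeed, since $A$ is prime, $Z(I(A)) = \bC 1$ (by \cite{H1981}*{Theorem 7.1} and \cite{H1982b}), so each central projection $p_s$ is $0$ or $1$: if $s \notin G_A$ then $p_s = 0$, hence $1 - p_s = 1$ lies in no proper primal ideal $J$ of $I(A)$, and therefore $s \notin G_I$ for every pseudo-Glimm ideal $I$.

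Once $G_I = G_A$ is in hand, condition (2) of Definition \ref{defn:uniformly-recurrent-subset} forces $H \leq G_A$ for every $(I,H)$ in any $(A,G)$-covering subset $X$ of $\Glimm_p(A) \times \Sub(G)$; that is, $X$ is automatically contained in $\Glimm_p(A) \times \Sub(G_A)$, and your proposed map $(I,H) \mapsto (I, H \cap G_A)$ is the identity on $X$. The amenable covering subsets of $\Glimm_p(A) \times \Sub(G_A)$ and of $\Glimm_p(A) \times \Sub(G)$ are therefore literally the same sets, the hypothesis of the corollary coincides verbatim with that of Theorem \ref{thm:main}, and both implications follow with no reduction argument at all; this is exactly the paper's proof. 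Your plan would eventually arrive at the same place, but as written it stops short of a proof: the assertion that elements of $G \setminus G_A$ ``should'' be removable without destroying the covering condition is precisely the step that needs --- and, via $G_I = G_A$, has --- a justification.
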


If $(A,G,\alpha)$ is minimal, meaning that $A$ has no non-trivial invariant ideals, then $(A,G,\alpha)$ has the intersection property precisely when the reduced crossed product $A \times_r G$ is simple. In this case, the analysis required to apply Theorem \ref{thm:intro-main} is greatly simplified.

\begin{thm} \label{thm:intro-main-minimal}
Let $(A,G,\alpha)$ be a minimal C*-dynamical system. Suppose that for every pseudo-Glimm ideal $I \in \Glimm_p(A)$ and amenable subgroup $H \leq G_I$, there is a net $(s_i)$ in $G$ such that $\lim s_i H s_i^{-1} = \{e\}$ in the Chabauty topology. Then the reduced crossed product $A \times_r G$ is simple. If $(A,G,\alpha)$ has vanishing obstruction, then the converse is also true.
\end{thm}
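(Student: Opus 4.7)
The plan is to derive Theorem \ref{thm:intro-main-minimal} from Theorem \ref{thm:intro-main} by exploiting minimality twice. First, in the minimal setting the intersection property is equivalent to simplicity: if a nonzero ideal $J$ of $A \times_r G$ has $J \cap A \neq 0$, then $J \cap A$ is a nonzero $G$-invariant ideal of $A$, hence equals $A$ by minimality, forcing $J = A \times_r G$; the converse is trivial. Second, the absence of nontrivial invariant ideals in $A$ forces the induced action on $\Glimm_p(A)$ to be minimal, since any nonempty proper closed $G$-invariant subset $Y \subseteq \Glimm_p(A)$ would produce the nontrivial invariant ideal $\bigcap_{I \in Y} I$ of $A$. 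It therefore suffices to show, for minimal systems, that the Chabauty hypothesis of Theorem \ref{thm:intro-main-minimal} is equivalent to the covering hypothesis of Theorem \ref{thm:intro-main}.

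For the forward implication, I would assume the Chabauty condition and let $X \subseteq \Glimm_p(A) \times \Sub(G)$ be any amenable $(A,G,\alpha)$-covering subset. Choose $(I,H) \in X$. Then $H \leq G_I$ is amenable, so by hypothesis there is a net $(s_i)$ with $s_i H s_i^{-1} \to \{e\}$. By compactness of $\Glimm_p(A)$, passing to a subnet we may assume $s_i \cdot I \to J$ for some $J$, and then $(J, \{e\}) \in X$ by closedness and $G$-invariance. Let $Y$ be the closure of the $G$-orbit of $(J, \{e\})$; it lies inside $X \cap (\Glimm_p(A) \times \{\{e\}\})$, is closed and $G$-invariant, and has $\pr_1(Y)$ equal to all of $\Glimm_p(A)$ by the induced minimality. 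Together with the trivial second coordinate, this should realize $Y$ as a trivial $(A,G,\alpha)$-covering subset of $X$.

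For the converse, assume vanishing obstruction together with the covering hypothesis of Theorem \ref{thm:intro-main}. Given $I \in \Glimm_p(A)$ and amenable $H \leq G_I$, let $X$ be the closure of the $G$-orbit of $(I,H)$ in $\Glimm_p(A) \times \Sub(G)$. The set of amenable subgroups is closed and conjugation-invariant in $\Sub(G)$, so $\pr_2(X) \subseteq \Sub_a(G)$; the equivariance $G_{sI} = s G_I s^{-1}$ combined with appropriate upper semicontinuity of the map $I \mapsto G_I$ with respect to Chabauty limits yields $H' \leq G_{I'}$ for all $(I', H') \in X$; and the covering condition on the first coordinate follows as above from minimality. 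By hypothesis, $X$ contains a trivial covering subset $Y$, and any $(J,\{e\}) \in Y \subseteq X$ is a limit of a net $(s_i \cdot I, s_i H s_i^{-1})$, yielding the desired convergence $s_i H s_i^{-1} \to \{e\}$.

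The principal obstacle will be verifying the two technical compatibilities invoked above: (a) the precise form of the covering condition must behave well enough under minimality that any nonempty closed $G$-invariant subset of $\Glimm_p(A) \times \Sub(G)$ with full first projection automatically covers, and (b) the quasi-stabilizer assignment $I \mapsto G_I$ must interact with Chabauty limits finely enough that the relation $H' \leq G_{I'}$ propagates through orbit closures. Both rely on structural properties of pseudo-Glimm ideals and quasi-stabilizers established in the body of the paper, and once these two points are settled the reduction to Theorem \ref{thm:intro-main} becomes essentially formal.
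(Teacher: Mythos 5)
Your forward direction is essentially the paper's own argument: pick $(I,H)$ in a given amenable covering set $X$, use the hypothesis and compactness to produce a limit point $(J,\{e\}) \in X$, and invoke minimality to see that the orbit closure of $(J,\{e\})$ is a trivial covering subset; the reduction of simplicity to the intersection property is also as in the paper. One caveat: minimality of $(A,G,\alpha)$ does \emph{not} make the induced action on $\Glimm_p(A)$ minimal. What it gives is only that $\bigcap_{I' \in Y} I' = 0$ for every nonempty closed invariant $Y \subseteq \Glimm_p(A)$ (a proper invariant ideal must vanish), and it is this weaker statement, not $\pr_1(Y) = \Glimm_p(A)$, that you should feed into the covering condition.

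The genuine gap is in the converse, at exactly the point you flag as obstacle (b), and it is not a technicality that the body of the paper settles for you. Taking the orbit closure $X$ of $(I,H)$ inside $\Glimm_p(A) \times \Sub(G)$, you need $H' \leq G_{I'}$ for every limit point $(I',H')$ of the net $(s_i I,\, s_i H s_i^{-1})$. But $G_{I'}$ is defined by a condition quantified over \emph{all} proper primal ideals $J'$ of $I(A)$ with $A \cap J' = I'$, and nothing controls how this fiber of primal ideals varies as $I'$ moves in the strong topology; Proposition \ref{prop:continuous-dense-g-delta} gives continuity of $z \mapsto G_z$ only on a dense $G_\delta$ subset of $\Glimm(I_G(A))$, not any semicontinuity of $I \mapsto G_I$ on $\Glimm_p(A)$. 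The paper's proof of Theorem \ref{thm:main-minimal} avoids this by \emph{not} working downstairs: it chooses a proper primal ideal $J$ of $I(A)$ with $A \cap J = I$, notes $H \leq G_I \leq G_J$, and takes the orbit closure $Y$ of $(J,H)$ in $\Glimm_p(I(A)) \times \Sub(G)$. There the quasi-stabilizers are described by the clopen sets $\{J' : 1-p_t \in J'\}$ (cf. Remark \ref{ref:quasi-stabilizer-injective} and Lemma \ref{lem:only-one-inclusion-must-hold}), so if $t \in H'$ then $t$ eventually lies in $s_i H s_i^{-1} \leq G_{s_i J}$, and clopenness propagates $1 - p_t \in J'$ to the limit, giving $H' \leq G_{J'}$. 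Since $(I(A),G)$ is again minimal, has vanishing obstruction, and inherits the intersection property via Theorem \ref{thm:intersection-property-iff-extensions-have-it}, Theorem \ref{thm:main} applied to $(I(A),G)$ extracts the trivial covering subset and hence the net $(s_i)$. Without this lift to $I(A)$, or a substitute proof of the required upper semicontinuity of $I \mapsto G_I$ on $\Glimm_p(A)$, your converse does not close.
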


Once again, if $A$ is prime, then the hypotheses of Theorem \ref{thm:intro-main-minimal} are easier to verify. 

\begin{cor}
Let $(A,G,\alpha)$ be a minimal C*-dynamical system such that $A$ is prime and let $G_A = \{s \in G : \alpha_s \text{ is quasi-inner} \}$. If for every amenable subgroup $H \leq G_A$, there is a net $(s_i)$ in $G$ such that $\lim s_i H s_i^{-1} = \{e\}$ in the Chabauty topology, then the reduced crossed product $A \times_r G$ is simple. If $G_A$ has vanishing Mackey obstruction $H^2(G_A,\mathbb{T})$, then the converse is also true.
\end{cor}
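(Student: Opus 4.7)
The plan is to deduce the corollary directly from Theorem \ref{thm:intro-main-minimal} by specializing to prime $A$. Three reductions need to be checked in order to identify the hypothesis of that theorem with the simpler hypothesis of the corollary: (i) $\Glimm_p(A) = \{0\}$; (ii) the quasi-stabilizer subgroup $G_0$ attached to the zero ideal equals $G_A$; and (iii) when $A$ is prime, $(A,G,\alpha)$ has vanishing obstruction if and only if the Mackey obstruction $H^2(G_A,\mathbb{T})$ vanishes.

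For (i), since $A$ is prime, the center of $A$ is $\bC$, so $\Glimm(A) = \{0\}$. I would verify from the definition of pseudo-Glimm ideal (given in the body of the paper, not in the excerpt) that in the prime case $\Glimm_p(A)$ likewise reduces to $\{0\}$; the heuristic is that pseudo-Glimm ideals should be primal in a suitable sense, so primeness of $A$ leaves zero as the only candidate. For (ii), by construction $G_I$ is the set of $s \in G$ whose induced automorphism of $A/I$ is almost inner in the specified sense, so taking $I = 0$ gives exactly the set of $s$ for which $\alpha_s$ is quasi-inner on $A$, i.e.\ $G_A$. Statement (iii) is asserted explicitly in the introduction, just after the definition of vanishing obstruction.

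Granting these three reductions, the hypothesis of Theorem \ref{thm:intro-main-minimal} — that for every $I \in \Glimm_p(A)$ and every amenable $H \leq G_I$ there is a net $(s_i)$ in $G$ with $\lim s_i H s_i^{-1} = \{e\}$ in the Chabauty topology — collapses to the statement that for every amenable $H \leq G_A$ such a net exists, which is precisely the hypothesis of the corollary. The forward implication is then immediate from the first half of Theorem \ref{thm:intro-main-minimal}, and the converse under the assumption that $H^2(G_A,\mathbb{T})$ vanishes follows by combining (iii) with the second half of that theorem.

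The main obstacle I anticipate is the verification of (i), since pseudo-Glimm ideals are a new notion introduced in the paper and the excerpt does not give their precise definition; one must confirm from that definition that primeness of $A$ genuinely forces $\Glimm_p(A)$ to be a singleton. The other two reductions are essentially bookkeeping: (ii) is definitional and (iii) is cited directly from the introduction.
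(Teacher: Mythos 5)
Your overall strategy---specializing Theorem \ref{thm:main-minimal} to the prime case---is exactly the paper's, and reductions (ii) and (iii) are fine (for (iii) the relevant justification is Example \ref{ex:partial-cohomology-2} together with the fact that $Z(I(A)) = \bC$ for prime $A$, so the partial cohomology group reduces to the ordinary Mackey obstruction). But reduction (i), which you correctly single out as the weak point, is in fact false. A pseudo-Glimm ideal is by definition $A \cap J$ for a \emph{proper primal ideal $J$ of the injective envelope $I(A)$}, and while primeness of $A$ forces $Z(I(A)) = \bC$ and hence $\Glimm(I(A)) = \{0\}$, the injective envelope can still have many proper primal ideals with distinct traces on $A$. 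Concretely, $A = \bC 1 + \K(H)$ is prime with $I(A) = \B(H)$; since $\B(H)$ is prime, every ideal of $\B(H)$ is primal, so $\K(H)$ is a proper primal ideal and $A \cap \K(H) = \K(H)$ is a nonzero pseudo-Glimm ideal. Thus $\Glimm_p(A) \supseteq \{0, \K(H)\}$. Your heuristic also misfires for the same reason: in a prime algebra $0$ is primal, hence \emph{every} ideal is primal, so primality does not single out the zero ideal.

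The gap is repairable, because the collapse you need is weaker than (i): it suffices that $G_I = G_A$ for \emph{every} $I \in \Glimm_p(A)$, not that there is only one such $I$. Since $A$ is prime, each $p_s$ is a central projection of $I(A)$ and $Z(I(A)) = \bC$, so $p_s \in \{0,1\}$. If $\alpha_s$ is quasi-inner then $p_s = 1$, so $1-p_s = 0$ lies in every proper primal ideal $J$ and $s \in G_I$ for all $I$; if not, then $p_s = 0$, so $1-p_s = 1$ lies in no proper ideal and $s \notin G_I$ for any $I$. Hence $G_I = G_A$ independently of $I$, the hypothesis of Theorem \ref{thm:main-minimal} collapses to that of the corollary, and the rest of your argument goes through; this is how the paper argues (compare the proof of the corollary following Theorem \ref{thm:main}).
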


The theory of essential and injective extensions of C*-dynamical systems as introduced by Hamana \cite{H1985} plays an important role in the work of Kawabe and in the previous work of the first author. A key point is the duality between topological boundaries of $G$ in the sense of Furstenberg and essential extensions of the trivial C*-dynamical system $(\bC,G)$. In particular, the injective hull of $(\bC,G)$ is the C*-dynamical system $(C(\fb),G,\alpha)$, where $\fb$ denotes the universal topological boundary of $G$ and $\alpha$ denotes the action on $C(\fb)$ induced by the unique action of $G$ on $\fb$.

Similar ideas play an important role in our work. We work within the category of noncommutative dynamical systems, where a noncommutative dynamical system is a triple $(S,G,\alpha)$ consisting of an operator system $S$,  i.e. a unital self-adjoint subspace of a C*-algebra, a discrete group $G$ and a group homomorphism $\alpha : G \to \Aut(S)$, where $\Aut(S)$ denotes the group of order isomorphisms of $S$. Using ideas from the theory of matrix convexity, we associate a ``matrix affine'' dynamical system to every noncommutative dynamical system. We introduce a generalized notion of topological boundary for matrix affine dynamical systems and establish a duality with essential extensions of the corresponding noncommutative dynamical system.

In addition to this introduction, this paper has eight other sections. In Section 2, we introduce various definitions and results needed from the theory noncommutative dynamics, including the notion of the injective hull of a C*-dynamical system. In Section 3, we introduce the notion of a pseudo-Glimm ideal. In Section 4, we introduce the notion of a quasi-stabilizer subgroup and establish a new characterization of proper outerness for an automorphism of a C*-algebra. In Section 5, we briefly review the ideal separation property and the intersection property. In Section 6, we introduce a definition of pseudo-expectation for a C*-dynamical system and prove that pseudo-expectations can be used to detect the intersection property. In Section 7, we introduce the definition of a boundary for a matrix affine dynamical system associated to a noncommutative dynamical system. In Section 8, we associate a partial dynamical system to every C*-dynamical system and consider an associated cohomological obstruction. In Section 9 we establish our main results characterizing the the intersection property. 

\addtocontents{toc}{\setcounter{tocdepth}{-10}}
\section*{Acknowledgements}
\addtocontents{toc}{\setcounter{tocdepth}{2}}

The authors are grateful to Rob Archbold, Rasmus Bryder, Shirly Geffen, Douglas Somerset, Yuhei Suzuki and the anonymous referees for a number of helpful comments, corrections and suggestions.

\section{Noncommutative dynamics}

\subsection{Topological dynamical systems} \label{sec:topological-dynamical-systems}

A {\em topological dynamical system} is a triple $(X,G,h)$ consisting of a topological space $X$, a countable discrete group $G$ and an action of $G$ on $X$ in the form of a group homomorphism $h : G \to \operatorname{Homeo}(X)$. If $X$ is compact and Hausdorff, we will say that $(X,G,h)$ is a {\em compact dynamical system}. If we do not need to explicitly refer to the action $h$, then we will suppress $h$ and write $(X,G)$ for $(X,G,h)$.

A compact topological dynamical system $(X,G,h)$ is said to be {\em affine} if $X$ is a convex subset of a locally convex topological vector space and each homeomorphism $h_s$ is affine for $s \in G$.

Let $\P(X)$ denote the space of regular Borel probability measures on $X$ equipped with the weak* topology. A compact dynamical system $(X,G,h)$ gives rise to an affine dynamical system $(\P(X),G,k)$, where the action $k : G \to \operatorname{Homeo}(\P(X))$ is defined by $k_s(\mu)(E) = \mu(s^{-1}E)$ for $s \in G$ and $E \subseteq X$ Borel.

Consider the topological dynamical system $(X,G,h)$. If the orbit $Gx = \{sx : s \in G\}$ is dense in $X$ for every $x \in X$, then $(X,G,h)$ is said to be {\em minimal}.

For $x \in X$, the {\em neighborhood stabilizer subgroup} $G_x^\circ$ consists of the elements in $G$ that fix a neighborhood of $x$. If $G_x^\circ$ is trivial for all $x \in X$, then $(X,G,h)$ is said to be {\em topologically free}.

\subsection{C*-dynamical systems}

For a reference on C*-algebras and C*-dynamical systems, we refer the reader to the book of Pedersen \cite{P1979}. For a reference on crossed products, we refer the reader to the book of Brown and Ozawa \cite{BO2008} or the book of Williams \cite{W2007}.

A {\em C*-dynamical system} is a triple $(A,G,\alpha)$ consisting of a unital C*-algebra $A$, a countable discrete group $G$ an action of $G$ on $A$ in the form of a group homomorphism $\alpha : G \to \operatorname{Aut}(A)$.

We will say that $(A,G,\alpha)$ is {\em commutative} if $A$ is commutative. In this case, $A = C(X)$ for a compact Hausdorff space $X$, and by Gelfand duality we obtain a corresponding compact dynamical system $(X,G,h)$. Here the action $h : G \to \operatorname{Homeo}(X)$ is defined by $h_s = \alpha_s^*|_{X}$, for $s \in G$, where $\alpha_s^*$ denotes the adjoint of $\alpha_s$, and where we have identified points in $X$ with the corresponding pure states on $C(X)$. Note that by the Riesz-Markov-Kakutani representation theorem, the state space of $C(X)$ can be identified with the space $\P(X)$ of regular Borel probability measures on $X$.

We will say that a commutative C*-dynamical system $(C(X),G,\alpha)$ is {\em minimal} (resp. {\em topologically free}) if the corresponding compact dynamical system $(X,G,h)$ is minimal (resp. topologically free).

\subsection{Essentiality and injectivity}

Although we are primarily interested in C*-dynamical systems, we will need to work in the larger category of noncommutative dynamical systems. For a general reference on operator systems and operator spaces, we refer the reader to the book of Paulsen \cite{P2002} or the book of Pisier \cite{P2003}. The material in this section is due to Hamana \cite{H1985}, although the presentation given here is slightly different than Hamana's presentation.

A {\em noncommutative dynamical system} is a triple $(S,G,\alpha)$ consisting of an operator system $S$, i.e. a unital self-adjoint subspace of a C*-algebra, a countable discrete group $G$ and a group homomorphism $\alpha : G \to \Aut(S)$, where $\Aut(S)$ denotes the group of complete order isomorphisms (i.e. unital complete isometries) of $S$. If we do not need to explicitly refer to the action $\alpha$, then we will suppress $\alpha$ and write $(S,G)$ for $(S,G,\alpha)$.

We consider the {\em category of noncommutative dynamical systems}. If $(T,G,\beta)$ is a noncommutative dynamical system, then a morphism is an equivariant unital complete order preserving map (i.e. a unital completely positive map) $\phi : (S,G,\alpha) \to (T,G,\beta)$. It will be convenient to write this as $\phi : S \to T$ when the group and the actions are clear from the context. If, in addition, $\phi$ is a complete order injection, then we say that it is an {\em embedding} of $(S,G,\alpha)$ into $(T,G,\beta)$.

An {\em extension} of $(S,G,\alpha)$ is a pair $((T,G,\beta),\iota)$ consisting of a noncommutative C*-dynamical system $(T,G,\beta)$ and an embedding $\iota : S \to T$. We will write $(S,G,\alpha) \subseteq (T,G,\beta)$ when $S \subseteq T$, $\iota$ is the inclusion map and $\beta|_S = \alpha$. The extension is {\em essential} if whenever $(R,G,\gamma)$ is a noncommutative C*-dynamical system and $\phi : T \to R$ is a morphism such that $\phi|_S$ is an embedding, then $\phi$ is an embedding.

The noncommutative C*-dynamical system $(R,G,\gamma)$ is injective if whenever $((T,G,\beta),\iota)$ is an extension of $(S,G,\alpha)$ and $\phi : S \to R$ is a morphism, then there is a morphism $\psi : T \to R$ such that $\psi \circ \iota = \phi$.

Hamana \cite{H1985}*{Theorem 2.5} showed that every noncommutative dynamical system $(S,G,\alpha)$ has a minimal injective extension $((I_G(S),G,I_G(\alpha),\iota)$ that is unique up to isomorphism called the {\em injective hull} of $(S,G,\alpha)$. This extension is a C*-dynamical system, meaning in particular that $I_G(S)$ is a C*-algebra. If $S$ is an operator subsystem of a commutative C*-algebra, then $I_G(S)$ is commutative. We will identify $S$ with its canonical image in $I_G(S)$.

It follows from above that the action $I_G(\alpha)$ extends the action $\alpha$ and is uniquely determined by it. When there is no chance of confusion, it will be convenient to simply write $\alpha$ instead of $I_G(\alpha)$.

We may view an operator system $S$ as a noncommutative dynamical system with respect to the trivial group and the trivial action. In this case, the C*-algebra of the injective hull constructed as above coincides with Hamana's \cite{H1979} injective hull $I(S)$ of $S$ in the category of operator systems. In this category, the morphisms are unital completely positive maps. Hamana \cite{H1985}*{Remark 2.3} showed that $I_G(S)$ is an injective object in the category of operator systems for any discrete group $G$. This implies the inclusions $S \subseteq I(S) \subseteq I_G(S)$. However, typically $I(S) \ne I_G(S)$. 

Every automorphism of $S$ extends uniquely to an automorphism of $I(S)$, so we obtain a noncommutative dynamical system $(I(S),G,I(\alpha))$, along with the inclusion of noncommutative dynamical systems
\[
(S,G,\alpha) \subseteq (I(S),G,I(\alpha)) \subseteq (I_G(S),G,I_G(\alpha)).
\]
In particular, $(I(S),G,I(\alpha))$ is an essential extension of $(S,G,\alpha)$.

\begin{example}
Let $(M,G,\alpha)$ be a C*-dynamical system such that $M$ is an injective von Neumann algebra and $G$ is amenable. Then it follows from a result of Hamana \cite{H1985}*{Remark 3.8} that $(M,G,\alpha)$ is injective.
\end{example}

\subsection{Boundaries for affine dynamical systems}

A compact dynamical system $(X,G)$ is said to be a {\em boundary} for $G$ if for any probability measure $\mu \in \P(X)$, $X \subseteq \overline{G\mu}$. Here we have identified points in $X$ with the corresponding point masses in $\P(X)$. It is not difficult to see (cf. \cite{G1976}*{III.2}) that $(X,G)$ is a boundary for $G$ if and only if the corresponding affine dynamical system $(\P(X),G)$ is irreducible, meaning that there is no proper invariant compact convex subset of $\P(X)$.

A boundary $(X,G)$ for $G$ can be viewed as a boundary for the singleton dynamical system $(\{\epsilon\}, G)$, and from above, we can equivalently view the compact affine dynamical system $(\P(X),G)$ as a boundary for the singleton affine dynamical system $(\P(\{\epsilon\}),G)$. In this section we will define a notion of affine boundary for an arbitrary compact affine dynamical system. This will suggest an appropriate definition of a boundary for an arbitrary compact dynamical system.

\begin{defn} \label{defn:commutative-affine-boundary}
Let $(K,G)$ be a compact affine dynamical system. We say that a pair $((L,G),f)$ consisting of an affine dynamical system $(L,G)$ and a continuous surjective affine equivariant map $f : L \to K$ is a {\em boundary} for $(K,G)$ if whenever $L' \subseteq L$ is an invariant closed convex subset satisfying $f(L') = K$, then $L' = L$.
\end{defn}

\begin{rem}
For compact dynamical systems $(X,G)$ and $(Y,G)$ along with an equivariant continuous surjective map $f : Y \to X$, it would be natural to say that $((Y,G),f)$ is a boundary for $(X,G)$ if $(\P(Y),G),g)$ is a boundary for $(\P(X),G)$ in the sense of Definition \ref{defn:commutative-affine-boundary}, where $g : \P(Y) \to \P(X)$ denotes the map induced by $f$.
\end{rem}

The following C*-algebraic characterization of topological boundaries was a key result in \cite{KK2017}.

\begin{thm}
Let $(X,G)$ be a compact dynamical system. Then $(X,G)$ is a boundary for $G$ if and only if the inclusion of C*-dynamical systems $(\bC,G) \subseteq (C(X),G)$ is essential. In particular, the injective hull of $(\bC, G)$ is $(C(\fb), G)$, where $\fb$ denotes Furstenberg's universal boundary for $G$.
\end{thm}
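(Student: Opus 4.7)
The plan is to unpack the biconditional using the remark that $(X,G)$ is a boundary precisely when the affine $G$-system $\P(X)$ is irreducible, together with the observation that since the restriction to $\bC$ of any morphism out of $C(X)$ is the unit map (automatically an embedding), essentiality of $(\bC,G) \subseteq (C(X),G)$ amounts to showing that every equivariant unital completely positive map $\phi : C(X) \to R$ into a $G$-$\ca$-algebra is a complete order injection.

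For the direction \emph{essentiality implies boundary}, I would argue by contrapositive. Suppose $\P(X)$ is reducible and fix a proper closed convex $G$-invariant $K \subsetneq \P(X)$. By Krein-Milman in the weak-$*$ topology, $\P(X)$ is the closed convex hull of its point masses, so some $\delta_{x_0}$ lies outside $K$. A Hahn-Banach separation in $C(X)^*$ produces a real-valued $f \in C(X)$, which after adding a constant may be taken nonnegative, satisfying $f(x_0) > \sup_{\nu \in K} \int f \, d\nu$. Form the equivariant ucp map $\phi : C(X) \to C(K)$ by $\phi(f)(\mu) = \int f \, d\mu$; rescaling $f$ to sup-norm $1$ yields $g$ with $\|\phi(g)\|_{C(K)} = \sup_{\nu \in K} \int g \, d\nu < 1 = \|g\|_\infty$, so $\phi$ is not isometric and thus not an embedding, contradicting essentiality.

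For \emph{boundary implies essentiality}, let $\phi : C(X) \to R$ be equivariant ucp. The dual map $\phi^* : S(R) \to \P(X)$ sending $\psi \mapsto \psi \circ \phi$ is continuous, affine, and equivariant, so its image is a nonempty closed convex $G$-invariant subset of $\P(X)$ and hence equals all of $\P(X)$ by irreducibility. In particular, for each $x \in X$ there is a state $\psi_x$ on $R$ with $\psi_x \circ \phi = \delta_x$. To upgrade to complete isometry, fix $n \geq 1$ and a unit vector $v \in \bC^n$, and consider the tensor-product state $\omega_v \otimes \psi_x$ on $M_n(R) \cong M_n \otimes R$, where $\omega_v(A) = \langle Av, v\rangle$ is the vector state on $M_n$. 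A direct computation gives $(\omega_v \otimes \psi_x)(\phi_n(F)) = \langle F(x)v, v\rangle$ for $F = [f_{ij}] \in M_n(C(X))$, so for self-adjoint $F$ we obtain
\[
\|\phi_n(F)\| \;\geq\; \sup_{x,\,\|v\|=1} |\langle F(x)v, v\rangle| \;=\; \sup_x \|F(x)\| \;=\; \|F\|,
\]
which combined with ucp contractivity yields equality. The standard $2\times 2$ self-adjoint dilation trick then upgrades isometry on self-adjoints of $M_n(C(X))$ to isometry of $\phi_n$ everywhere, so $\phi$ is a complete isometry and hence an embedding.

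The ``in particular'' clause follows from Hamana's result that $I_G$ of a commutative system is commutative: $I_G(\bC) = C(Y)$ for some compact $G$-space $Y$, the equivalence just proved makes $(Y,G)$ a boundary, and the injective hull's universal property together with the same equivalence applied to an arbitrary boundary $(X,G)$ forces $Y$ to sit above every boundary, identifying it with Furstenberg's universal boundary $\fb$. The main obstacle throughout is precisely the promotion from isometry to \emph{complete} isometry in the forward direction — surjectivity of $\phi^*$ via states on $R$ alone yields only the scalar norm, and the tensor trick with vector states on $M_n$ is what exploits the commutativity of $C(X)$ so that point-evaluations act as characters pairing with matrix vectors to reconstruct matricial norms, which is where the boundary hypothesis does its real work.
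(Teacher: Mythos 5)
Your proof of the main biconditional is correct, but it takes a different route from the paper. The paper does not prove this statement directly: it cites \cite{KK2017} and instead establishes the more general affine duality (Theorem \ref{thm:affine-boundary-iff-essential-extension} and Corollary \ref{cor:characterization-affine-boundaries}) via Kadison's function representation and Lemma \ref{lem:adjoint-order-isomorphism-iff-homeomorphism}, from which the present statement is the special case $X=\{\mathrm{pt}\}$, $L=\P(X)$. You argue directly: Hahn--Banach separation of a point mass from a proper invariant closed convex $K\subsetneq\P(X)$ produces a non-isometric equivariant morphism $C(X)\to C(K)$ (killing essentiality), and conversely irreducibility of $\P(X)$ makes $\phi^*\colon S(R)\to\P(X)$ surjective, after which the vector-state/tensor computation $(\omega_v\otimes\psi_x)(\phi_n(F))=\langle F(x)v,v\rangle$ plus the $2\times 2$ self-adjoint dilation upgrades isometry to complete isometry. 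This is sound, and it is the right place to worry: surjectivity of $\phi^*$ alone only controls the scalar norm, and your tensor argument is what exploits commutativity of the domain. The paper's route buys the generalization to arbitrary affine (and later matrix affine) boundaries, which is what the rest of the paper actually uses; yours is more elementary and self-contained for this particular statement.

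The ``in particular'' clause, however, is genuinely under-proved as written. Knowing that $I_G(\bC)=C(Y)$ with $(Y,G)$ a boundary does not immediately identify $Y$ with $\fb$: the universal property of the injective hull gives you a unital equivariant \emph{complete order embedding} $C(X)\hookrightarrow C(Y)$ for every boundary $X$, but to say that ``$Y$ sits above every boundary'' in Furstenberg's sense you must convert this into a continuous equivariant surjection $Y\to X$ of spaces. That requires showing the associated continuous equivariant map $Y\to\P(X)$, $y\mapsto\delta_y\circ\phi$, lands in the point masses --- which uses that $X$ is a boundary (so $\delta(X)\subseteq\overline{G\mu_{y_0}}$ for any $y_0$) together with minimality of the boundary $Y$ to conclude that the closed invariant set $\{y:\mu_y\in\delta(X)\}$ is all of $Y$ --- and then you must invoke the uniqueness of the maximal boundary from Furstenberg--Glasner theory (or run the rigidity argument: embeddings $C(Y)\hookrightarrow C(\fb)\hookrightarrow C(Y)$ compose to the identity by rigidity of the injective hull). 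None of this is automatic from ``the injective hull's universal property,'' so you should either supply these steps or cite them explicitly.
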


We will see that an analogue of the above result holds for affine boundaries as defined in Definition \ref{defn:commutative-affine-boundary}.

For an affine dynamical system $(K,G,h)$, let $A(K)$ denote the space of continuous affine functions on $K$. Then $A(K)$ is an operator system and $(A(K),G,\alpha)$ is a noncommutative dynamical system, where $\alpha : G \to \Aut(A(K))$ is defined by $\alpha_s(a) = a \circ h_{s^{-1}}$ for $s \in G$ and $a \in A(K)$. By a result of Kadison \cite{K1951}, every operator subsystem of a commutative C*-algebra is isomorphic to the space of continuous affine functions on its state space. We require the following result  characterizing isomorphisms of operator systems of this form (see e.g. \cite{AS2001}*{Corollary 2.122}).

\begin{lem} \label{lem:adjoint-order-isomorphism-iff-homeomorphism}
Let $K$ and $L$ be compact convex sets, let $\phi : A(K) \to A(L)$ be a unital order preserving map and let $f : L \to K$ denote the restriction of the adjoint of $\phi$. Then $\phi$ is an order isomorphism if and only if $f$ is an affine homeomorphism. 
\end{lem}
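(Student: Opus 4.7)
The plan is to reduce everything to the pullback formula $\phi(a) = a \circ f$ for $a \in A(K)$, which is essentially the definition of the adjoint once points of $K$ and $L$ are identified with point-evaluation states on $A(K)$ and $A(L)$ respectively. First I would record the standing facts: because $\phi$ is unital and positive, $\phi^*$ carries states to states, and because $\phi^*$ is weak*-continuous and linear, its restriction to $L$ lands in the state space of $A(K)$, which by Kadison's theorem is affinely homeomorphic to $K$ via point evaluation. Thus $f$ is always a continuous affine map $L \to K$. Unpacking the definition of the adjoint, for $a \in A(K)$ and $y \in L$ one has
\[
\phi(a)(y) \;=\; y(\phi(a)) \;=\; (\phi^* y)(a) \;=\; f(y)(a) \;=\; a(f(y)),
\]
so $\phi$ is precisely the pullback $a \mapsto a \circ f$. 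The whole lemma will follow from this identity.

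For the forward direction, suppose $\phi$ is an order isomorphism. Then $\phi^{-1}$ is also unital and order preserving, so the same argument gives a continuous affine map $g : K \to L$ with $\phi^{-1}(b) = b \circ g$ for $b \in A(L)$. Composing, for every $a \in A(K)$ and $x \in K$,
\[
a(f(g(x))) \;=\; \phi(a)(g(x)) \;=\; \phi^{-1}(\phi(a))(x) \;=\; a(x),
\]
and since $A(K)$ separates points of $K$ this forces $f \circ g = \id_K$; symmetrically $g \circ f = \id_L$. Hence $f$ is a bijective continuous affine map between compact Hausdorff spaces and therefore an affine homeomorphism.

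For the converse, suppose $f : L \to K$ is an affine homeomorphism. Define $\psi : A(L) \to A(K)$ by $\psi(b) = b \circ f^{-1}$; this is manifestly unital and order preserving because $f^{-1}$ is an affine continuous map and pullback of a nonnegative affine function is nonnegative affine. The pullback identity shows $\psi \circ \phi = \id_{A(K)}$ and $\phi \circ \psi = \id_{A(L)}$, so $\phi$ is a bijective unital order isomorphism with inverse $\psi$.

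The only delicate point is the identification of $L$ and $K$ with the state spaces of $A(L)$ and $A(K)$, which legitimizes reading the adjoint $\phi^*$ as a map $L \to K$ and justifies ``$A(K)$ separates points of $K$'' in the forward direction; this is a standard consequence of Kadison's representation theorem together with the Hahn--Banach theorem, and no further obstacle arises.
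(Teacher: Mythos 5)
Your argument is correct. The paper does not actually prove this lemma --- it only cites \cite{AS2001}*{Corollary 2.122} --- and your proof is the standard duality argument underlying that reference: the identification of $K$ and $L$ with the state spaces of $A(K)$ and $A(L)$ via Kadison's theorem, the pullback identity $\phi(a)=a\circ f$, and point-separation by $A(K)$ to invert $f$. The only cosmetic imprecision is the clause attributing ``$f$ lands in the state space'' to weak*-continuity; that is really a consequence of $\phi$ being unital and positive (weak*-continuity of $\phi^*$ is what gives continuity of $f$), but the substance of the proof is unaffected.
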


\begin{thm} \label{thm:affine-boundary-iff-essential-extension}
Let $(K,G)$ and $(L,G)$ be affine dynamical systems and let $f : L \to K$ be an equivariant continuous surjective affine map. Then $((L,G),f)$ is a boundary for $(K,G)$ if and only if the corresponding extension $((A(L),G),\iota)$ of $(A(K),G)$ is essential, where $\iota : A(K) \to A(L)$ denotes the equivariant unital order preserving map defined by $\iota(a)(\mu) = a(f(\mu))$ for $a \in A(K)$ and $\mu \in L$.
\end{thm}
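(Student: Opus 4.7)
The plan is to exploit the classical duality between compact convex sets and their function systems: by Kadison's theorem $S(A(M)) = M$ for any compact convex $M$, and under this identification the restriction of $\iota^*$ to $L = S(A(L))$ is precisely $f$. Both implications then become statements about adjoint maps on state spaces, with the key observation that matrix order in the function system $A(L) \subseteq C(L)$ is detected pointwise on $L$.

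For the converse direction, I would assume the extension is essential, take a $G$-invariant closed convex subset $L' \subseteq L$ with $f(L') = K$, and consider the restriction morphism $\pi : A(L) \to A(L')$, $\pi(a) = a|_{L'}$, which is an equivariant ucp map. The composition $\pi \circ \iota : A(K) \to A(L')$ sends $a$ to $a \circ f|_{L'}$, and at every matrix level the positivity condition $x(f(\mu)) \ge 0$ for all $\mu \in L'$ is equivalent, by $f(L') = K$, to $x \ge 0$ on $K$; hence $\pi \circ \iota$ is a complete order injection. Essentiality then promotes $\pi$ itself to a complete order injection, whose adjoint $\pi^* : S(A(L')) \to S(A(L))$ is surjective by Hahn-Banach for positive linear functionals. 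Under Kadison's identifications this adjoint is the inclusion $L' \hookrightarrow L$, forcing $L' = L$.

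For the forward direction, I would suppose $(L,G,f)$ is a boundary and take an equivariant ucp map $\phi : A(L) \to R$ with $\phi|_{A(K)}$ a complete order injection. The candidate invariant subset is $L' := \phi^*(S(R)) \subseteq L$, which is $G$-invariant, compact, and convex. A short computation gives
\[
f(L') = \iota^*(\phi^*(S(R))) = (\phi|_{A(K)})^*(S(R)) = K,
\]
the last equality because the adjoint of any order injection of operator systems is surjective on state spaces. The boundary hypothesis forces $L' = L$, so $\phi^* : S(R) \to L$ is surjective. The main obstacle I anticipate is the final step: upgrading this surjectivity to the assertion that $\phi$ itself is a complete order injection, not merely an order injection. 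Given $x \in M_n(A(L))$ with $\phi_n(x) \ge 0$ and a point $\mu \in L$, I would lift $\mu$ to some $\rho \in S(R)$ and apply the ucp amplification $\rho \otimes \id_{M_n} : M_n(R) \to M_n(\bC)$ to $\phi_n(x)$, producing the positive matrix $(\mu(x_{ij}))_{i,j}$. Because $A(L)$ is a function system inside $C(L)$, matrix positivity in $M_n(A(L))$ is determined pointwise on $L$, and the preceding argument runs at every $\mu \in L$, so $x \ge 0$. The commutativity of $C(L)$ is essential here; for a general operator system in place of $A(L)$ the surjectivity of $\phi^*$ on single states would not be enough to recover matrix positivity, and a finer matrix-state analysis would be required.
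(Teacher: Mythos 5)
Your proof is correct and follows essentially the same duality argument as the paper: both directions hinge on identifying $L$ and $L'$ with the state spaces of $A(L)$ and $A(L')$ via Kadison's theorem, using the restriction map to an invariant convex subset $L'$ for one implication and the set $L' = \phi^*(S(R))$ for the other, exactly as in the paper's proof via Lemma~\ref{lem:adjoint-order-isomorphism-iff-homeomorphism}. The one point where you go beyond the paper is the final step of the forward direction, where you test essentiality against an arbitrary operator-system target $R$ and recover complete order injectivity from surjectivity of $\phi^*$ onto $L$ using the fact that matrix positivity in $A(L)\subseteq C(L)$ is detected pointwise; the paper instead only tests against affine targets $A(N)$ and cites the lemma, so your version is, if anything, slightly more careful.
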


\begin{proof}
($\Rightarrow$)
Suppose $((A(L),G),\iota)$ is an essential extension of $(A(K),G)$ and let $L' \subseteq L$ be a closed convex invariant subset satisfying $f(L') = K$. Let $\rho : A(L) \to A(L')$ denote the restriction map. Then $\rho \circ \iota$ is an embedding, so by essentiality, $\rho$ is an order isomorphism. The restriction of the adjoint of $\rho$ to $L'$ is the inclusion map from $L'$ into $L$. Hence by Lemma \ref{lem:adjoint-order-isomorphism-iff-homeomorphism}, $L' = L$.

($\Leftarrow$)
Suppose $((L,G),f)$ is a boundary for $(K,G)$. Let $(N,G)$ be an affine dynamical system and let $\phi : A(L) \to A(N)$ be an equivariant unital order preserving map such that $\phi \circ \iota$ is an embedding. We can assume $\phi$ is surjective. Let $g : N \to L$ denote the restriction of the adjoint of $\phi$ and let $L' = g(N)$. Since $\phi \circ \iota$ is an embedding, it follows from Lemma \ref{lem:adjoint-order-isomorphism-iff-homeomorphism} that $f \circ g$ is an affine homeomorphism. In particular, $f(L') = K$. Since $((L,G),f)$ is a boundary for $(K,G)$, it follows that $L' = L$. Hence by Lemma \ref{lem:adjoint-order-isomorphism-iff-homeomorphism}, $\phi$ is an order isomorphism.
\end{proof}

The next result is an easy consequence of Theorem \ref{thm:affine-boundary-iff-essential-extension}.

\begin{cor} \label{cor:characterization-affine-boundaries}
Let $(C(X),G)$ and $(C(Y),G)$ be commutative C*-dynamical systems. Let $\iota : C(X) \to C(Y)$ be an equivariant embedding and let $f : \P(Y) \to \P(X)$ denote the restriction of the adjoint of $\iota$. Then $((C(Y),G),\iota)$ is an essential extension of $(C(X),G)$ if and only if whenever $K \subseteq \P(Y)$ is an invariant closed convex subset satisfying $f(K) = \P(X)$, then $K = \P(Y)$.
\end{cor}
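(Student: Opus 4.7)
The plan is to derive Corollary~\ref{cor:characterization-affine-boundaries} as a direct specialization of Theorem~\ref{thm:affine-boundary-iff-essential-extension}, using Kadison's functional representation to identify each commutative C*-algebra in sight with the space of continuous affine functions on its state space.

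First I would set up the relevant affine data. The state spaces $\P(X)$ and $\P(Y)$, viewed as weak*-compact convex subsets of $C(X)^*$ and $C(Y)^*$, carry natural continuous affine $G$-actions induced by the given actions on $X$ and $Y$, so $(\P(X),G)$ and $(\P(Y),G)$ are compact affine dynamical systems. The map $f : \P(Y) \to \P(X)$ is affine and continuous because it is the restriction of the weak*-continuous linear map $\iota^*$ to $\P(Y)$, and it is equivariant because $\iota$ is. Surjectivity of $f$ follows from the fact that $\iota$ is a unital order embedding: every state on $C(X)$ extends by Hahn-Banach to a state on $C(Y)$, and any such extension is a preimage under $f$.

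Next I would invoke Kadison's representation theorem to identify $C(X) \cong A(\P(X))$ and $C(Y) \cong A(\P(Y))$ as operator systems via the Gelfand transforms $a \mapsto \hat{a}$ with $\hat{a}(\mu) = \mu(a)$. These identifications are automatically $G$-equivariant, and under them the equivariant embedding $\iota : C(X) \to C(Y)$ corresponds exactly to the map $A(\P(X)) \to A(\P(Y))$ sending $a$ to $a \circ f$, which is precisely the embedding described in Theorem~\ref{thm:affine-boundary-iff-essential-extension} when $K = \P(X)$ and $L = \P(Y)$. Consequently, essentialness of the extension $((C(Y),G),\iota)$ of $(C(X),G)$ is equivalent to essentialness of the corresponding extension of $(A(\P(X)),G)$ by $(A(\P(Y)),G)$.

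Finally, the condition on invariant closed convex subsets $K \subseteq \P(Y)$ appearing in the corollary is precisely the boundary condition of Definition~\ref{defn:commutative-affine-boundary} for the pair $((\P(Y),G),f)$ over $(\P(X),G)$. Theorem~\ref{thm:affine-boundary-iff-essential-extension} then immediately delivers the stated equivalence. There is no genuine obstacle beyond carefully matching the two dictionaries, commutative C*-algebra versus continuous affine functions on states, and the embedding $\iota$ versus its dual map $f$; once these identifications are made, the corollary is an immediate specialization of the preceding theorem.
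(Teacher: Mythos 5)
Your proposal is correct and matches the paper's intent: the paper simply remarks that the corollary is an easy consequence of Theorem \ref{thm:affine-boundary-iff-essential-extension}, and your argument supplies exactly the expected dictionary (Kadison's identification $C(X)\cong A(\P(X))$, the check that $f$ is an equivariant continuous affine surjection, and the observation that the invariant-closed-convex-subset condition is precisely the boundary condition of Definition \ref{defn:commutative-affine-boundary}). Nothing further is needed.
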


\subsection{Properly outer and quasi-inner automorphisms} \label{sec:properly-outer-quasi-inner}

To study noncommutative C*-dynamical systems, it will be necessary to identify two special types of automorphisms. First, we need to identify the appropriate noncommutative analogue of a topologically free automorphism. Second, we need to identify automorphisms that are non-inner but ``close'' to being inner, for which there is no commutative analogue. Initially, we will characterize both types of automorphisms in terms of their Borchers spectrum. However, we will soon establish characterizations more useful for our purposes. For a definition of the Borchers spectrum, we direct the reader to Pedersen's book \cite{P1979}*{Section 8.8}.

\begin{defn} \label{defn:inner-outer}
Let $A$ be a unital C*-algebra and let $\alpha$ be an automorphism of $A$. We say that $\alpha$ is {\em properly outer} if for every $\alpha$-invariant ideal of $A$, the Borchers spectrum of the restriction $\alpha|_I$ satisfies $\Gamma_B(\alpha|_I) \ne \{1\}$.We say that $\alpha$ is {\em inner} if there is a unitary $u \in A$ such that $\alpha = \Ad(u)$, i.e. such that $\alpha(a) = uau^*$ for all $a \in A$. We say that $\alpha$ is {\em quasi-inner} if the Borchers spectrum of $\alpha$ satisfies $\Gamma_B(\alpha) = \{1\}$.
\end{defn}

\begin{rem}
The above definitions of proper outerness and quasi-innerness are due to Kishimoto \cite{K1982}. However, there are several different definitions of proper outerness in the literature.

The first definition of proper outerness was introduced by Kallmann \cite{K1969} under a different name. Kallmann says that $\alpha$ is properly outer if whenever $a_\alpha \in A$ satisfies $a a_\alpha = a_\alpha \alpha(a)$ for all $a \in A$, then $a_\alpha = 0$. If $A$ is a monotone complete C*-algebra, and in particular if $A$ is injective, then $\alpha$ is properly outer in the sense of Kishimoto if and only if it is properly outer in the sense of Kallmann \cite{H1982a}.

A definition of proper outerness appropriate for general C*-algebras was introduced by Elliott \cite{E1980}. Elliott says that $\alpha$ is properly outer if for every invariant ideal $I$ in $A$ and every unitary $u \in M(I)$, $\| \alpha|_I - \Ad(u)|_I \| = 2$. If $\alpha$ is properly outer in the sense of Kishimoto, then it is properly outer in the sense of Elliott, and if $A$ is separable, then the converse is also true \cite{OP1982}*{Theorem 6.6}.
\end{rem}

Hamana \cite{H1985}*{Theorem 7.4} characterized quasi-innerness in terms of the injective hull. Since the injective hull is a good approximation to $A$, in the sense that the inclusion $A \subseteq I(A)$ is essential, the next result says that quasi-inner automorphisms are close to being inner.

\begin{thm} \label{thm:inj-env-characterization-quasi-inner-properly-outer}
Let $A$ be a unital C*-algebra and let $\alpha : A \to A$ be an automorphism. Then $\alpha$ is quasi-inner (resp. properly outer) if and only if the unique extension of $\alpha$ to $I(A)$ is inner (resp. properly outer).
\end{thm}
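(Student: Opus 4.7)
The plan is to exploit two structural features of the injective hull: (a) $I(A)$ is monotone complete, being injective; and (b) the inclusion $A \subseteq I(A)$ is essential. From (a), together with Hamana's earlier result \cite{H1982a}, Kallmann's and Kishimoto's definitions of proper outerness agree on $I(A)$; in particular, on $I(A)$ the failure of proper outerness of $\tilde\alpha$ produces a non-zero element $a$ with $xa = a \tilde\alpha(x)$ for all $x$. I would use this Kallmann-type condition as the main bridge between innerness of $\tilde\alpha$ and the Borchers-spectrum conditions on $\alpha$.

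For the quasi-inner equivalence, the easy direction is the reverse: if $\tilde\alpha = \Ad(u)$ on $I(A)$, then $\Gamma_B(\tilde\alpha) = \{1\}$ tautologically, and using (b) together with Pedersen's description \cite{P1979}*{Section 8.8} of $\Gamma_B$ via Arveson spectra on invariant hereditary subalgebras, any $\alpha$-invariant hereditary subalgebra $B \subseteq A$ generates an $\tilde\alpha$-invariant hereditary subalgebra of $I(A)$ with compatible spectrum, giving $\Gamma_B(\alpha) \subseteq \Gamma_B(\tilde\alpha) = \{1\}$. For the forward direction, I would start from $\Gamma_B(\alpha) = \{1\}$, extract ``almost $1$-eigenvectors'' in every invariant hereditary subalgebra via the Pedersen characterization, polar-decompose inside $I(A)$ to obtain non-zero partial isometries $v$ intertwining $\tilde\alpha$ with the identity in the sense $xv = v\tilde\alpha(x)$, and then use Zorn's lemma together with monotone completeness to assemble a maximal mutually orthogonal family of such $v$'s into a unitary $u \in I(A)$ with $\tilde\alpha = \Ad(u)$. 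Essentiality is used at the end to rule out a non-zero orthogonal complement that would force the assembled element to be only a proper isometry rather than a unitary.

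The properly outer equivalence then reduces, by negation, to applying the quasi-inner equivalence on invariant ideals: $\alpha$ fails to be properly outer iff $\alpha|_J$ is quasi-inner for some non-zero $\alpha$-invariant ideal $J \subseteq A$. Invariant ideals of $A$ generate invariant ideals of $I(A)$, and essentiality of $A \subseteq I(A)$ restricts appropriately to such ideal pairs (a standard feature of the injective hull construction), so quasi-innerness on $J$ translates to quasi-innerness on the corresponding invariant ideal of $I(A)$, which, by the quasi-inner equivalence already established, matches innerness of the extension restricted there. The main obstacle throughout is the forward direction of the quasi-inner statement: the delicate assembly of an implementing unitary inside $I(A)$ via a transfinite polar-decomposition argument, where one must simultaneously control the intertwining identity on each partial isometry and the exhaustion of the identity by their final projections.
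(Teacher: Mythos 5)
The paper does not actually prove this statement: it is quoted verbatim as Hamana's result (\cite{H1985}*{Theorem 7.4}, resting on \cite{H1982a}), so there is no in-paper argument to compare yours against line by line. Judged on its own, your sketch has the right architecture, and it does match the shape of Hamana's development: the properly outer equivalence reduces to the quasi-inner one via the largest invariant ideal on which $\alpha$ is quasi-inner and the largest central projection $p_\alpha\in I(A)$ on which the extension is inner (the paper's Propositions \ref{prop:largest-projection-inner-outer} and \ref{prop:largest-ideal-inner-outer}), with essentiality entering exactly as you say, through the fact that every nonzero ideal of $I(A)$ meets $A$.

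There are, however, two concrete gaps. First, in the ``easy'' direction your justification of $\Gamma_B(\alpha)\subseteq\Gamma_B(\tilde\alpha)$ does not deliver that containment: sending an $\alpha$-invariant hereditary subalgebra $B\subseteq A$ to the hereditary subalgebra of $I(A)$ it generates only bounds $\Gamma_B(\alpha)$ by an intersection of spectra over \emph{some} hereditary subalgebras of $I(A)$, which is a priori larger than $\Gamma_B(\tilde\alpha)$; the comparison you need runs the other way, from hereditary subalgebras of $I(A)$ (e.g.\ corners by spectral projections of the implementing unitary $u$) down to hereditary subalgebras of $A$ generating essential ideals, and a nonzero hereditary subalgebra of $I(A)$ need not meet $A$ at all --- only ideals are guaranteed to. Second, and more seriously, in the forward direction the hypothesis $\Gamma_B(\alpha)=\{1\}$ only provides \emph{approximate} spectral concentration near $1$ inside hereditary subalgebras of $A$; polar-decomposing an almost-eigenvector yields only an almost-intertwiner, and your plan gives no mechanism for producing an exact nonzero $v\in I(A)$ with $xv=v\tilde\alpha(x)$. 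The limit that upgrades approximate intertwiners to exact ones must be shown to exist \emph{in} $I(A)$ rather than in $A''$, which is precisely where Hamana's regular monotone completion $\bar A$ and the normality of the inclusions $A\subseteq\bar A\subseteq I(A)$ (used elsewhere in the paper, e.g.\ in Lemma \ref{lem:monotone-projections}) are indispensable; this step is the real content of the theorem and is missing from the sketch. The subsequent Zorn/orthogonal-family assembly of a unitary is fine once such exact partial isometries exist.
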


We will make frequent use of the following two results (see \cite{H1982a}*{Proposition 5.1} and \cite{H1985}*{Remark 7.5}).

\begin{prop} \label{prop:largest-projection-inner-outer}
Let $A$ be a unital C*-algebra with injective hull $I(A)$. Let $\alpha$ be an automorphism of $A$ with unique extension $I(\alpha)$ to $I(A)$. There is a largest $I(\alpha)$-invariant projection $p_\alpha$ in $I(A)$ with the property that $I(\alpha)|_{p_\alpha I(A) p_\alpha}$ is inner. The projection $p_\alpha$ is central and $I(\alpha)|_{I(A)(1-p_\alpha)}$ is properly outer.
\end{prop}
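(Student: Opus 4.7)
The plan is to build $p_\alpha$ as the supremum of \emph{central} $I(\alpha)$-invariant projections on which $I(\alpha)$ restricts to an inner automorphism, and then to verify that this $p_\alpha$ dominates every invariant projection (central or not) with inner restriction. Throughout I use that $I(A)$ is monotone complete (as an injective C*-algebra), so bounded increasing nets of self-adjoints admit suprema, and any $*$-automorphism of $I(A)$, in particular $I(\alpha)$, preserves such suprema.

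For the construction, let
\[
\mathcal{F}_c = \{\,z \in \mathcal{Z}(I(A)) : z^2 = z,\ I(\alpha)(z) = z,\ I(\alpha)|_{zI(A)}\text{ is inner}\,\}.
\]
Given $z_1, z_2 \in \mathcal{F}_c$ with implementing unitaries $u_i \in z_i I(A)$, a direct calculation shows that $u_1 + (1-z_1)u_2$ is a unitary in $(z_1 \vee z_2)I(A)$ implementing $I(\alpha)$, via the orthogonal decomposition $z_1 \vee z_2 = z_1 + (1-z_1)z_2$ of central invariant projections. Hence $\mathcal{F}_c$ is upward directed, and $p_\alpha := \sup \mathcal{F}_c$ exists. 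Centrality and invariance of $p_\alpha$ are then automatic, and to produce an implementing unitary I choose by Zorn a maximal orthogonal subfamily $\{e_j\} \subseteq \mathcal{F}_c$ with $e_j \leq p_\alpha$; the directedness argument forces $\sum_j e_j = p_\alpha$, and the orthogonal sum of chosen implementing unitaries $u_j \in e_j I(A)$ converges in the monotone complete sense to a unitary $u \in p_\alpha I(A)$ with $I(\alpha)|_{p_\alpha I(A)} = \Ad(u)$.

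To promote maximality from $\mathcal{F}_c$ to all invariant projections, let $q$ be any $I(\alpha)$-invariant projection with $I(\alpha)|_{qI(A)q}$ inner, implemented by $v \in qI(A)q$. The central cover $z(q)$ is invariant since $I(\alpha)(q) = q$. Using comparability in $I(A)$, one decomposes $z(q) = \sum_i f_i$ into pairwise orthogonal projections with partial isometries $w_i$ satisfying $w_i w_i^* \leq q$ and $w_i^* w_i = f_i$; by a Zorn refinement that tracks the $I(\alpha)$-action on the family, one arranges the partition $\{f_i\}$ to be $I(\alpha)$-invariant. Then $V = \sum_i w_i^* v w_i$ is a unitary in $z(q)I(A)$ implementing $I(\alpha)|_{z(q)I(A)}$, so $z(q) \in \mathcal{F}_c$ and $q \leq z(q) \leq p_\alpha$. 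I expect this equivariant refinement of the Murray--von Neumann decomposition of $z(q)$ to be the main technical obstacle, since it requires a Zorn argument simultaneously compatible with orthogonality, subequivalence to $q$, and the $I(\alpha)$-action.

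For proper outerness of $I(\alpha)|_{(1-p_\alpha)I(A)}$, suppose the contrary: then there is a nonzero $I(\alpha)$-invariant ideal $J$ in $(1-p_\alpha)I(A)$ with $\Gamma_B(I(\alpha)|_J) = \{1\}$, i.e.\ $I(\alpha)|_J$ is quasi-inner. In the monotone complete algebra $(1-p_\alpha)I(A)$, a norm-closed invariant ideal has a central invariant support projection $e \leq 1-p_\alpha$. The corner $eI(A)$ is itself injective, so it is its own injective hull; quasi-innerness of $I(\alpha)|_J$ transfers to $I(\alpha)|_{eI(A)}$, and Theorem \ref{thm:inj-env-characterization-quasi-inner-properly-outer} then upgrades this to honest innerness. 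Hence $e \in \mathcal{F}_c$ is nonzero and orthogonal to $p_\alpha$, contradicting $p_\alpha = \sup \mathcal{F}_c$.
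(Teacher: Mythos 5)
The paper offers no proof of this proposition --- it is quoted from Hamana (\cite{H1982a}*{Proposition 5.1} and \cite{H1985}*{Remark 7.5}) --- so there is nothing in-text to compare against; I am judging your argument on its own. Your first step (realizing $p_\alpha$ as the supremum of the upward-directed family $\mathcal{F}_c$ of \emph{central} invariant projections with inner restriction, and gluing the implementing unitaries over a maximal orthogonal subfamily) is sound, as is your last step: taking the central support $e$ of a quasi-inner invariant ideal in $I(A)(1-p_\alpha)$ and using injectivity of $eI(A)$ together with Theorem \ref{thm:inj-env-characterization-quasi-inner-properly-outer} to upgrade quasi-innerness to innerness is a clean way to get proper outerness of the complement. (Do justify the passage from $\Gamma_B(I(\alpha)|_J)=\{1\}$ to $\Gamma_B(I(\alpha)|_{eI(A)})=\{1\}$ directly from the definition of the Borchers spectrum; in the paper's logical order the analogous Remark \ref{rem:quasi-inner-on-essential-ideal} is downstream of the proposition you are proving.)

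The genuine gap is exactly the step you flag, and your proposed formula there is wrong. With $w_i^*w_i=f_i$ pairwise orthogonal and $w_iw_i^*=g_i\le q$, the cross terms of $V=\sum_i w_i^*vw_i$ do vanish (since $w_iw_j^*=w_if_if_jw_j^*=0$), but the diagonal terms give $V^*V=\sum_i w_i^*v^*g_i v w_i$, and $v^*g_iv$ need not dominate $g_i$, so $V$ is not a unitary and does not implement $I(\alpha)$. The correct intertwiner is
\[
V \;=\; \sum_i I(\alpha)(w_i)^*\, v\, w_i ,
\]
for which one computes, using $I(\alpha)(y)=vyv^*$ for $y\in qI(A)q$, that $V^*V=\sum_i f_i=z(q)$, $VV^*=\sum_i I(\alpha)(f_i)=I(\alpha)(z(q))=z(q)$, and $VxV^*=\sum_{i,j}I(\alpha)(f_ixf_j)=I(\alpha)(x)$ for $x\in z(q)I(A)$. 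Note that this requires no equivariance of the partition $\{f_i\}$ whatsoever: the ``equivariant refinement of the Murray--von Neumann decomposition'' you identify as the main technical obstacle is a red herring, and I see no way to arrange it in general anyway. What must still be supplied is (i) the comparison-theoretic fact that in the AW*-algebra $I(A)$ one can write $z(q)=\sum_i f_i$ with $f_i$ pairwise orthogonal and $f_i\precsim q$ (a maximality argument using generalized comparability), and (ii) the addability in $I(A)$ of the family $\{I(\alpha)(w_i)^*vw_i\}$, whose initial projections are pairwise orthogonal and whose final projections are pairwise orthogonal; addability of such families is not automatic in a general AW*-algebra and genuinely uses monotone completeness. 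As written, your second step does not go through.
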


\begin{prop} \label{prop:largest-ideal-inner-outer}
Let $A$ be a C*-algebra with injective hull $I(A)$. Let $\alpha$ be an automorphism of $A$ with unique extension $I(\alpha)$ to $I(A)$. There is a largest $\alpha$-invariant ideal $I_\alpha$ of $A$ such that $\alpha|_{I_\alpha}$ is quasi-inner and a largest $\alpha$-invariant ideal $J_\alpha$ of $A$ such that $\alpha|_{J_\alpha}$ is properly outer. These ideals satisfy $I_\alpha \cap J_\alpha = 0$, $I_\alpha^\perp = J_\alpha$ and $J_\alpha^\perp = I_\alpha$. Moreover, $I_\alpha + J_\alpha$ is essential. Let $p_\alpha \in I(A)$ denote the largest $\alpha$-invariant projection with the property that $I(\alpha)|_{I(A)p_\alpha}$ is inner as in Proposition \ref{prop:largest-projection-inner-outer}. Then $I_\alpha = A \cap I(A)p_\alpha$ and $J_\alpha = A \cap I(A)(1-p_\alpha)$.
\end{prop}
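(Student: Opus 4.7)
The strategy is to let Proposition \ref{prop:largest-projection-inner-outer} do the heavy lifting and then translate everything from $I(A)$ back down to $A$ through the central projection $p_\alpha$. So I would begin by taking the $I(\alpha)$-invariant central projection $p_\alpha \in I(A)$ produced by that proposition and defining
\[
I_\alpha := A \cap I(A)p_\alpha, \qquad J_\alpha := A \cap I(A)(1-p_\alpha).
\]
Because $p_\alpha$ is central, $I(A)p_\alpha$ and $I(A)(1-p_\alpha)$ are closed two-sided ideals of $I(A)$, so $I_\alpha$ and $J_\alpha$ are closed two-sided ideals of $A$; because $p_\alpha$ is $I(\alpha)$-invariant, both are $\alpha$-invariant. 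The relation $I_\alpha \cap J_\alpha = 0$ is then immediate from $I(A)p_\alpha \cap I(A)(1-p_\alpha) = 0$.

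Next I would verify the inner/outer behavior and maximality. For any $\alpha$-invariant ideal $K \subseteq A$, the injective hull $I(K)$ embeds canonically as a corner $I(A)q_K$ for a central $I(\alpha)$-invariant projection $q_K \in I(A)$ (this is the standard central-cover picture available since $I(A)$ is an AW*-algebra and $K \subseteq I(A)$ is essential in the hereditary C*-subalgebra it generates). Using Theorem \ref{thm:inj-env-characterization-quasi-inner-properly-outer}, $\alpha|_K$ is quasi-inner (resp.\ properly outer) iff $I(\alpha)|_{I(A)q_K}$ is inner (resp.\ properly outer). Applied to $K = I_\alpha$ one has $q_{I_\alpha} \leq p_\alpha$, so $I(\alpha)|_{I(A)q_{I_\alpha}}$ inherits innerness from $I(\alpha)|_{I(A)p_\alpha}$; applied to $K = J_\alpha$ one gets $q_{J_\alpha} \leq 1-p_\alpha$ and proper outerness is inherited from $I(\alpha)|_{I(A)(1-p_\alpha)}$. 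Maximality of $I_\alpha$ (resp.\ $J_\alpha$) then follows from the maximality of $p_\alpha$ (resp.\ $1-p_\alpha$) in Proposition \ref{prop:largest-projection-inner-outer}: any quasi-inner $\alpha$-invariant ideal $K$ has $q_K \leq p_\alpha$, whence $K \subseteq I(A)q_K \cap A \subseteq I(A)p_\alpha \cap A = I_\alpha$, and symmetrically for $J_\alpha$.

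For the annihilator identities, the inclusions $J_\alpha \subseteq I_\alpha^\perp$ and $I_\alpha \subseteq J_\alpha^\perp$ follow at once from $I(A)p_\alpha \cdot I(A)(1-p_\alpha) = 0$. For the reverse inclusion $I_\alpha^\perp \subseteq J_\alpha$, take $a \in I_\alpha^\perp$ and decompose $a = ap_\alpha + a(1-p_\alpha)$ inside $I(A)$; it suffices to show $ap_\alpha = 0$. The ideal of $I(A)$ generated by $I_\alpha$ has central support $p_\alpha$ (this is the place where one needs the essentiality of $A \subseteq I(A)$ together with the minimality of $p_\alpha$ guaranteed in Proposition \ref{prop:largest-projection-inner-outer}), so $a I_\alpha = 0$ forces $ap_\alpha = 0$ and hence $a = a(1-p_\alpha) \in A \cap I(A)(1-p_\alpha) = J_\alpha$. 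Symmetrically $J_\alpha^\perp \subseteq I_\alpha$. Finally, essentiality of $I_\alpha + J_\alpha$ is the identity $(I_\alpha + J_\alpha)^\perp = I_\alpha^\perp \cap J_\alpha^\perp = J_\alpha \cap I_\alpha = 0$.

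The main obstacle I anticipate is the "central cover" step: showing that the ideal generated in $I(A)$ by an $\alpha$-invariant ideal $K \subseteq A$ really is $I(A)p$ for a central $I(\alpha)$-invariant projection, and that this projection coincides with the one from the identification $I(K) \subseteq I(A)p$. This relies on AW*-algebra structure of $I(A)$ together with essentiality of $A \subseteq I(A)$, and it is also what underwrites the nontrivial half of $I_\alpha^\perp = J_\alpha$. Everything else is essentially bookkeeping once this correspondence between invariant ideals of $A$ and central invariant projections of $I(A)$ has been pinned down.
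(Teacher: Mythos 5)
Your proof is correct in substance. Note first that the paper does not actually prove this proposition --- it is quoted from Hamana (\cite{H1982a}*{Proposition 5.1} and \cite{H1985}*{Remark 7.5}) --- and your reconstruction via central support projections is essentially Hamana's route. The ``central cover'' step you flag as the main obstacle is indeed the one genuine external input: the identification $I(K)\cong I(A)q_K$ for a closed ideal $K\subseteq A$, with $q_K\in Z(I(A))$ the open central support projection of $K$, comes from \cite{H1981} and \cite{H1982b} (the same machinery the paper invokes in Lemma \ref{lem:monotone-projections}). Granting it, your deductions via Theorem \ref{thm:inj-env-characterization-quasi-inner-properly-outer} and the maximality of $p_\alpha$ are fine, with two small caveats. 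First, the maximality of $J_\alpha$ is not literally ``symmetric'': Proposition \ref{prop:largest-projection-inner-outer} asserts maximality only of the inner part, so for a properly outer invariant ideal $K$ you should argue that $q_K p_\alpha\neq 0$ would yield a nonzero central corner on which $I(\alpha)$ is simultaneously inner and properly outer, which is impossible; hence $q_K\leq 1-p_\alpha$. Second, your justification of the annihilator step is slightly garbled: the claim that the central support of $I_\alpha$ equals $p_\alpha$ uses no ``minimality of $p_\alpha$'' (no such statement exists in Proposition \ref{prop:largest-projection-inner-outer}), only essentiality of $A\subseteq I(A)$. Concretely, if $q_{I_\alpha}<p_\alpha$, then $r=p_\alpha-q_{I_\alpha}$ is a nonzero central projection, and any $x\in A\cap I(A)r\subseteq I_\alpha$ satisfies $x=xq_{I_\alpha}=x\,r\,q_{I_\alpha}=0$, so the nonzero ideal $I(A)r$ of $I(A)$ meets $A$ trivially, contradicting essentiality. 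With these two repairs the remaining computations ($aI_\alpha=0\Rightarrow aq_{I_\alpha}=0\Rightarrow ap_\alpha=0$, the annihilator identities, and $(I_\alpha+J_\alpha)^\perp=J_\alpha\cap I_\alpha=0$) go through as you wrote them.
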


\begin{rem} \label{rem:quasi-inner-on-essential-ideal}
The above proposition implies that if $\alpha$ is inner, then it is quasi-inner. Furthermore, if the restriction $\alpha|_I$ is properly outer (resp. quasi-inner) for an invariant essential ideal $I$ in $A$, then  $\alpha$ is properly outer (resp. quasi-inner).
\end{rem}

For a commutative C*-dynamical system $(C(X),G,\alpha)$ with injective hull $(I_G(C(X)),G,I_G(\alpha))$, it is not hard to see that for $s \in G$, $I_G(\alpha)_s$ is topologically free if $\alpha_s$ is topologically free. The next result is a noncommutative generalization of this fact.

\begin{thm} \label{thm:prop-outer-implies-ess-prop-outer}
Let $(A,G,\alpha)$ be a C*-dynamical system with injective hull $(I_G(A),G,I_G(\alpha))$. For $s \in G$, let $p_s \in I(A)$ denote the largest $I(\alpha)_s$-invariant projection such that $I(\alpha)_s|_{I(A)p_s}$ is inner and let $u_s \in I(A)p_s$ be a unitary such that $I(\alpha)_s(bp_s) = u_s b u_s^*$ for $b \in I(A)$. Similarly, let $q_s \in I_G(A)$ denote the largest $I_G(\alpha)_s$-invariant projection such that $I_G(\alpha)_s|_{I_G(A)q_s}$ is inner and let $v_s \in I_G(A)q_s$ be a unitary such that $I_G(\alpha)_s(c) = v_s c v_s^*$ for $c \in I_G(A)$. Then for $t \in G$, $I(\alpha)_t(u_s)$ is a unitary in $I(A)p_{tst^{-1}}$ such that $I(\alpha)_{tst^{-1}}(b) = I(\alpha)_t(u_s) b  I(\alpha)_t(u_s)^*$ for $b \in I(A)$. Similarly, $I_G(\alpha)_t(v_s)$ is a unitary in $I_G(A)q_{tst^{-1}}$ such that $I_G(\alpha)_{tst^{-1}}(c) = I_G(\alpha)_t(v_s) c  I_G(\alpha)_t(u_s)^*$ for $c \in I_G(A)$. Hence $I(\alpha)_t(p_s) = p_{tst^{-1}}$ and $I_G(\alpha)_t(q_s) = q_{tst^{-1}}$. Furthermore, $p_s \geq q_s$, so if $I(\alpha)_s$ is properly outer, then so is $I_G(\alpha)_s$.
\end{thm}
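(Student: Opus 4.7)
The plan is to prove the four assertions in order, with the third being the substantive one.

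For the intertwining formulas and the equivariance of $p_s$, I would apply $I(\alpha)_t$ to both sides of the defining relation $I(\alpha)_s(bp_s)=u_s b u_s^*$ for $b\in I(A)$, use the homomorphism property $I(\alpha)_t\circ I(\alpha)_s = I(\alpha)_{tst^{-1}}\circ I(\alpha)_t$, and substitute $b=I(\alpha)_{t^{-1}}(c)$ for arbitrary $c\in I(A)$ to obtain
\[
I(\alpha)_{tst^{-1}}\bigl(c\cdot I(\alpha)_t(p_s)\bigr) = I(\alpha)_t(u_s)\,c\,I(\alpha)_t(u_s)^*.
\]
This exhibits $I(\alpha)_t(u_s)$ as a unitary in the corner $I(A)\cdot I(\alpha)_t(p_s)$ that implements $I(\alpha)_{tst^{-1}}$. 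Since $I(\alpha)_t(p_s)$ is a projection in $I(A)$ and is $I(\alpha)_{tst^{-1}}$-invariant (a direct check using $I(\alpha)_s(p_s)=p_s$ and the homomorphism property), the maximality of $p_{tst^{-1}}$ from Proposition~\ref{prop:largest-projection-inner-outer} forces $I(\alpha)_t(p_s)\leq p_{tst^{-1}}$; applying this inequality with $(s,t)$ replaced by $(tst^{-1},t^{-1})$ gives the reverse inequality, yielding equality. The identical argument carried out inside $I_G(A)$, with $v_s,q_s$ in place of $u_s,p_s$, gives the corresponding statements for $I_G(\alpha)$.

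The main obstacle is the inequality $p_s\geq q_s$. The strategy is to show that $I_G(\alpha)_s$ is properly outer on the corner $(1-p_s)\,I_G(A)\,(1-p_s)$; once this is established, Proposition~\ref{prop:largest-projection-inner-outer} applied inside $I_G(A)$ forces $1-p_s\leq 1-q_s$, hence $q_s\leq p_s$. The key tool is that the inclusion $I(A)\subseteq I_G(A)$ is essential as an extension of G-systems (inherited from the essentiality of $A\subseteq I_G(A)$, since any morphism out of $I_G(A)$ that embeds $I(A)$ automatically embeds $A$). Any nontrivial $I_G(\alpha)_s$-invariant projection $r\leq 1-p_s$ in $I_G(A)$ supporting an inner implementation by some $w\in r\,I_G(A)\,r$ would, in combination with this essentiality, transfer to an analogous structure inside $I(A)(1-p_s)$, contradicting the proper outerness of $I(\alpha)_s|_{I(A)(1-p_s)}$ guaranteed by Proposition~\ref{prop:largest-projection-inner-outer}. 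In effect, the execution mirrors Hamana's proof of Theorem~\ref{thm:inj-env-characterization-quasi-inner-properly-outer} applied to the cyclic action generated by $I(\alpha)_s|_{I(A)(1-p_s)}$ together with its essential extension to $(1-p_s)\,I_G(A)\,(1-p_s)$.

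The final implication is then immediate: if $I(\alpha)_s$ is properly outer, Proposition~\ref{prop:largest-projection-inner-outer} gives $p_s=0$, so $q_s\leq p_s=0$, and Proposition~\ref{prop:largest-projection-inner-outer} applied in $I_G(A)$ yields proper outerness of $I_G(\alpha)_s$.
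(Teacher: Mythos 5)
Your treatment of the intertwining relations, of the identity $I(\alpha)_t(p_s)=p_{tst^{-1}}$ (and its analogue for $q_s$), and of the final deduction from $q_s\leq p_s$ is correct and is essentially the paper's argument. The gap is at the one substantive step, $p_s\geq q_s$. You reduce it, correctly, to showing that $I_G(\alpha)_s$ is properly outer on $I_G(A)(1-p_s)$, but your proposed mechanism --- that a unitary $w$ in a corner $r\,I_G(A)\,r$ with $r\leq 1-p_s$ implementing $I_G(\alpha)_s$ would ``transfer to an analogous structure inside $I(A)(1-p_s)$ by essentiality'' --- is not an argument, and the essentiality you invoke does not do this job. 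The inclusion $I(A)\subseteq I_G(A)$ is essential only for \emph{$G$-equivariant} morphisms, whereas the data $(r,w)$ is attached to a single group element: the corner $1-p_s$ is not $G$-invariant (it is carried to $1-p_{tst^{-1}}$ by $I(\alpha)_t$), and $(1-p_s)I_G(A)(1-p_s)$ is not known to be an essential (let alone $\langle s\rangle$-essential) extension of $I(A)(1-p_s)$. Likewise, Theorem \ref{thm:inj-env-characterization-quasi-inner-properly-outer} is a statement about $A\subseteq I(A)$, where rigidity holds for \emph{all} u.c.p.\ maps fixing $A$; no such rigidity is available for the inclusion $I(A)\subseteq I_G(A)$, so ``mirroring Hamana's proof'' does not go through. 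That proper outerness of a single automorphism can genuinely fail to pass to a larger algebra containing a copy of the smaller one shows this step cannot be waved through.

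What is actually needed is the following device, which is the heart of the paper's proof. Choose a (non-equivariant) conditional expectation $\phi:I_G(A)\to I(A)$, which exists by injectivity of $I(A)$. A multiplicative-domain computation shows that for every $t\in G$ the element $\phi\bigl(I_G(\alpha)_t(v_s)\bigr)$ satisfies the Kallmann relation $b\,\phi(I_G(\alpha)_t(v_s))=\phi(I_G(\alpha)_t(v_s))\,I(\alpha)_{tst^{-1}}(b)$ for all $b\in I(A)$, hence lies in $I(A)p_{tst^{-1}}=I(A)\,I(\alpha)_t(p_s)$; consequently $\phi\bigl(I_G(\alpha)_t(v_s(1-p_s))\bigr)=0$ for every $t$. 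One then assembles the \emph{equivariant} u.c.p.\ map $\psi:I_G(A)\to\ell^\infty(G,I(A))$, $\psi(c)=\sum_t\delta_t\otimes\phi(I_G(\alpha)_{t^{-1}}(c))$, which restricts to a complete isometry on $I(A)$ and is therefore completely isometric by the $G$-essentiality of $I(A)\subseteq I_G(A)$. Faithfulness of $\psi$ together with the vanishing of all coordinates of $\psi(v_s(1-p_s))$ forces $v_s(1-p_s)=0$, i.e.\ $q_s\leq p_s$. This aggregation over all translates is exactly what replaces the unavailable non-equivariant rigidity, and it is the step your sketch omits.
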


\begin{proof}
Since $I(A)$ is injective, there is a conditional expectation $\phi : I_G(A) \to I(A)$. Define a $G$-equivariant unital completely positive map $\psi : I_G(A) \to \ell^\infty(G,I(A))$ by $\psi(b) = \sum_{t \in G}  \delta_t \otimes \phi(I_G(\alpha)_{t^{-1}}(c))$ for $c \in I_G(A)$. It is clear that the restriction $\psi|_{I(A)}$ is completely isometric. Hence by the $G$-injectivity of $I_G(A)$, $\psi$ is completely isometric.

Note that for $t \in G$, $I(\alpha)_t(p_s) = p_{tst^{-1}}$. Indeed, $b u_s = u_s I(\alpha)_s(b)$ for all $b \in I(A)$, so 
$I(\alpha)_{t^{-1}}(b) u_s = u_s I(\alpha)_{st^{-1}}(b)$ for all $b \in I(A)$, and applying $I(\alpha)_t$ to both sides gives $b I(\alpha)_t(u_s) = I(\alpha)_t(u_s) I(\alpha)_{tst^{-1}}(b)$. So $I(\alpha)_t(p_s) \leq p_{tst^{-1}}$. But then applying $I(\alpha)_{t^{-1}}$ to both sides and repeating the same argument implies $p_s \leq I(\alpha)_{t^{-1}}(p_{tst^{-1}}) \leq p_s$. Hence $I(\alpha)_t^{-1}(p_{tst^{-1}}) = p_s$, giving $p_{tst^{-1}} = I(\alpha)_t(p_s)$. Similarly, for $t \in G$, $q_{tst^{-1}} = I_G(\alpha)_t(q_s)$.

Since $I(A)$ is in the multiplicative domain of $\phi$,
\begin{align*}
b\phi(I_G(\alpha)_t(v_s)) &= \phi(b I_G(\alpha)_t(v_s)) \\
&= \phi(I_G(\alpha)_t(v_s) I(\alpha)_{tst^{-1}}(b)) \\
&= \phi(I_G(\alpha)_t(v_s)) I(\alpha)_{tst^{-1}}(b)
\end{align*}
for all $t \in G$ and $b \in I(A)$.

Therefore, $\phi(I_G(\alpha)_t(v_s)) \in I(A)(\alpha)_t(p_s)$. It follows that
\[
0 = \phi(I_G(\alpha)_t(v_s)) (1-I(\alpha)_t(p_s))  = \phi(I_G(\alpha)_t(v_s (1-p_s)))
\]
for all $t \in G$. Hence $\psi(v_s (1-p_s)) = 0$, and since $\psi$ is completely isometric, it follows that $v_s (1-p_s) = 0$.
\end{proof}

\begin{defn} \label{defn:properly-outer-quasi-inner-action}
Let $(A,G,\alpha)$ be a C*-dynamical system. We say that $(A,G,\alpha)$ is {\em properly outer} if for every $s \in G \setminus \{e\}$, $\alpha_s$ is properly outer. We say that $(A,G,\alpha)$ is {\em inner} if there is a group homomorphism $u : G \to U(A)$ such that for every $s \in G$, $\alpha_s = \Ad(u_s)$. Here $U(A)$ denotes the unitary group of $A$.
\end{defn}

The next result follows immediately from Theorem \ref{thm:prop-outer-implies-ess-prop-outer} and Definition \ref{defn:properly-outer-quasi-inner-action}.

\begin{cor}
Let $(A,G)$ be a C*-dynamical system with injective hull $(I_G(A),G)$. If $(A,G)$ is properly outer, then $(I_G(A),G)$ is properly outer.
\end{cor}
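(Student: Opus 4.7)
The plan is to unfold the definition of proper outerness for a C*-dynamical system and apply the previously cited results in sequence; no new estimate is needed. By Definition \ref{defn:properly-outer-quasi-inner-action}, it suffices to show that for every $s \in G \setminus \{e\}$, the extended automorphism $I_G(\alpha)_s$ is properly outer on $I_G(A)$. So I would fix such an $s$ at the start and work with a single automorphism throughout.

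The first step is to translate the hypothesis from $A$ to the smaller injective hull $I(A)$. By hypothesis $\alpha_s$ is properly outer, so Theorem \ref{thm:inj-env-characterization-quasi-inner-properly-outer} (Hamana's characterization) yields that the unique extension $I(\alpha)_s$ is properly outer as an automorphism of $I(A)$. In the language of Proposition \ref{prop:largest-projection-inner-outer}, this is equivalent to the vanishing $p_s = 0$ of the central projection $p_s \in I(A)$ carrying the inner part of $I(\alpha)_s$ (since otherwise $I(\alpha)_s$ would restrict to an inner, hence non-properly-outer, automorphism of the invariant ideal $p_s I(A)$).

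The second step is to push from $I(A)$ up to $I_G(A)$. This is exactly what is packaged in the final line of Theorem \ref{thm:prop-outer-implies-ess-prop-outer}: the comparison $p_s \geq q_s$, where $q_s \in I_G(A)$ is the largest $I_G(\alpha)_s$-invariant projection on which $I_G(\alpha)_s$ is inner. From $p_s = 0$ we obtain $q_s = 0$, and invoking Proposition \ref{prop:largest-projection-inner-outer} again (now applied inside $I_G(A)$) this means that $I_G(\alpha)_s$ itself, being $I_G(\alpha)_s|_{I_G(A)(1-q_s)}$, is properly outer on $I_G(A)$. Since $s \in G \setminus \{e\}$ was arbitrary, Definition \ref{defn:properly-outer-quasi-inner-action} gives that $(I_G(A),G)$ is properly outer.

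I do not expect an obstacle: the real content of the argument already lives in Theorem \ref{thm:prop-outer-implies-ess-prop-outer}, where the inequality $p_s \geq q_s$ is extracted from the conditional expectation $\phi : I_G(A) \to I(A)$ together with $G$-injectivity of $I_G(A)$. The corollary itself is essentially a bookkeeping consequence, combining Hamana's injective-hull criterion for a single automorphism with the projection comparison and the elementwise formulation of proper outerness for a dynamical system.
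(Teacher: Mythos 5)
Your proof is correct and follows exactly the route the paper intends: translate proper outerness of $\alpha_s$ to $I(\alpha)_s$ via Theorem \ref{thm:inj-env-characterization-quasi-inner-properly-outer}, identify this with $p_s=0$ using Proposition \ref{prop:largest-projection-inner-outer}, and then use the inequality $p_s \geq q_s$ from Theorem \ref{thm:prop-outer-implies-ess-prop-outer} to conclude $q_s=0$ and hence proper outerness of $I_G(\alpha)_s$. The paper states the corollary as an immediate consequence of that theorem and Definition \ref{defn:properly-outer-quasi-inner-action}; you have simply spelled out the same bookkeeping.
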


\section{Glimm ideals and pseudo-Glimm ideals} \label{sec:injectivity-and-essentiality}

\subsection{Glimm ideals} \label{sec:glimm-ideals}

\begin{defn}
Let $A$ be a unital C*-algebra with center $Z(A)$. The {\em Glimm space} $\Glimm(A)$ is the spectrum of $Z(A)$.
\end{defn}

For a unital C*-algebra $A$, there are several equivalent characterizations of $\Glimm(A)$. For example, it can be identified with the complete regularization (equivalently, the Hausdorffization) of the primitive ideal space $\Prim(A)$ (see e.g. \cite{RW1998}*{A.3}). We require a characterization of $\Glimm(A)$ in terms of Glimm ideals.

\begin{defn}
Let $A$ be a unital C*-algebra with center $Z(A)$. An ideal of $A$ is a {\em Glimm ideal} if it be written as $AK_x$ for some $x \in \Glimm(A)$, where $K_x = \{ f \in Z(A) : f(x) = 0 \}$ is the maximal ideal in $Z(A)$ corresponding to $x$.
\end{defn}

\begin{rem}
It follows from Cohen's factorization theorem that if $K$ is an ideal of $Z(A)$, then $AK$ is a (closed) ideal of $A$.
\end{rem}

For $x \in \Glimm(A)$ with corresponding Glimm ideal $AK_x$ as above, $Z(A) \cap AK_x = K_x$. Thus we obtain a bijective correspondence between points in $\Glimm(A)$ and Glimm ideals in $A$. It will be convenient to identify these spaces and write $x$ for both the point $x \in \Glimm(A)$ and the ideal $AK_x$ in $A$.

For a C*-dynamical system $(A,G,\alpha)$, we obtain a compact topological dynamical system $(\Glimm(A),G,h)$, where $h$ is the action induced by $\alpha$.

Hamana \cite{H1985}*{Proposition 6.4} proved the inclusions
\[
(Z(A),G) \subseteq (Z(I(A)),G) \subseteq (Z(I_G(A)),G).
\]
We identify $\Glimm(A)$, $\Glimm(I(A))$ and $\Glimm(I_G(A))$ with the spectrum of the center of $A$, $I(A)$ and $I_G(A)$ respectively. Applying Gelfand duality, we obtain continuous equivariant surjective maps:
\begin{equation*}
\begin{tikzcd}
\Glimm(A) & \arrow{l}{f} \Glimm(I(A)) & \arrow{l}{g} \arrow[bend right]{ll}{f \circ g} \Glimm(I_G(A))
\end{tikzcd}
\end{equation*}
The map $f$ can be characterized as the restriction to $\Glimm(I(A))$ of the intersection map between the set of ideals of $Z(I(A))$ and the set of ideals of $Z(A)$. Specifically, for $y \in \Glimm(I(A))$, let $x = f(y)$. Then letting $K_x$ and $K_y$ denote the corresponding maximal ideals in $Z(A)$ and $Z(I(A))$ respectively, $K_x = Z(A) \cap K_y$. The maps $g$ and $f \circ g$ have similar characterizations.

We suspect that the following result is well known.

\begin{prop} \label{prop:glimm-injective-hull-extremally-disconnected}
Let $B$ be an injective C*-algebra. Then $\Glimm(B)$ is extremally disconnected, i.e. the closure of every open subset is clopen.
\end{prop}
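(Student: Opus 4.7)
The plan is to reduce the statement to a classical characterization of extremally disconnected spaces. Since $\Glimm(B)$ is by definition the spectrum of the commutative unital C*-algebra $Z(B)$, Gelfand duality gives $Z(B) \cong C(\Glimm(B))$. A well-known theorem (going back to Stone) asserts that a compact Hausdorff space $X$ is extremally disconnected if and only if $C(X)$ is monotone complete, i.e.\ every bounded increasing net of self-adjoint elements has a supremum. So it suffices to prove that $Z(B)$ is monotone complete.

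First I would show that the injectivity of $B$ forces $B$ itself to be monotone complete. To do this, faithfully and unitally represent $B \subseteq \mathcal{B}(H)$ on some Hilbert space $H$, and use the injectivity of $B$ in the category of operator systems to obtain a conditional expectation $E : \mathcal{B}(H) \to B$. Given a bounded increasing net $(b_i)$ of self-adjoint elements of $B$, let $b \in \mathcal{B}(H)$ be its supremum (which exists because $\mathcal{B}(H)$ is a von Neumann algebra and the net converges in the strong operator topology to $b$). Setting $c = E(b) \in B$, the positivity and unitality of $E$ give $b_i = E(b_i) \leq c$ for every $i$, and if $b' \in B$ is any upper bound for $(b_i)$ in $B$, then $b \leq b'$ in $\mathcal{B}(H)$ and applying $E$ yields $c \leq b'$. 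Thus $c$ is the supremum of $(b_i)$ in $B$.

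Next I would descend monotone completeness from $B$ to $Z(B)$. If $(z_i)$ is a bounded increasing net of self-adjoint elements of $Z(B)$ with supremum $z$ taken in $B$, then for each $b \in B$ the relations $z_i b = b z_i$ pass to the limit in the strong operator topology (in any faithful representation), giving $zb = bz$. Hence $z \in Z(B)$, and it is automatically the supremum in $Z(B)$ as well. So $Z(B)$ is monotone complete.

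None of the individual steps is especially difficult; the one place requiring mild care is the identification of the supremum in $B$ as $E(b)$, which relies on the ucp property of the conditional expectation rather than normality of $E$. Combining the above with the Stone-type equivalence between monotone completeness of $C(X)$ and extremal disconnectedness of $X$ yields the statement.
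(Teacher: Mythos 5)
Your overall strategy is the same as the paper's: reduce via Stone's theorem to showing $Z(B)$ is monotone complete, use injectivity to get monotone completeness of $B$ (the paper simply cites this; your conditional-expectation argument for it is the standard proof and is correct), and then show that the supremum in $B$ of a bounded increasing net from $Z(B)$ is again central. The one place your write-up needs repair is the final step. You argue that the relations $z_i b = b z_i$ ``pass to the limit in the strong operator topology,'' but the SOT limit of $(z_i)$ in $\mathcal{B}(H)$ is the element $w = \sup_{\mathcal{B}(H)} z_i$, which need not lie in $B$ and need not equal the supremum $z = E(w)$ computed in $B$; separate SOT continuity of multiplication only gives $wb = bw$, not $zb = bz$. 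This is fixable in either of two ways: (i) since $B$ lies in the multiplicative domain of $E$ and $w$ commutes with $B$, one gets $zb = E(w)b = E(wb) = E(bw) = bE(w) = bz$; or (ii) avoid the representation entirely, as the paper does, by noting that for each unitary $u \in B$ the map $\Ad(u)$ is an order automorphism of $B$ and hence preserves suprema computed in $B$, so $uzu^* = \sup_B u z_i u^* = \sup_B z_i = z$, whence $z \in Z(B)$. With either correction the proof is complete and matches the paper's in substance.
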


\begin{proof}
By \cite{S1949} (see also \cite{T2002}*{Proposition 1.7}) it suffices to show that $Z(B)$ is monotone complete, i.e. that for every bounded increasing net $(b_i)$ in $Z(B)$, $\sup_{Z(B)} b_i$ exists and belongs to $Z(B)$. Since $B$ is injective it is monotone complete. Let $b = \sup_B b_i$. Then for every unitary $u \in B$, $ubu^* = \sup_B u b_i u^* = \sup_B b_i = b$. Hence $b \in Z(B)$ and clearly $b = \sup_{Z(B)} b_i$.
\end{proof}

It follows from a result of Gleason \cite{G1958} that if $X$ is a compact Hausdorff space, then the commutative C*-algebra $C(X)$ is injective if and only if $X$ is extremally disconnected. Thus Proposition \ref{prop:glimm-injective-hull-extremally-disconnected} implies that the center of every injective C*-algebra is injective. However, Hamana \cite{H1982b}*{Corollary 1.6} observes that in some cases $I(Z(A)) \ne Z(I(A))$ (see Example \ref{ex:pseudo-glimm}). See the recent paper of Bryder \cite{B2017}*{Remark 4.13} for an equivariant example of this phenomenon.

\subsection{Pseudo-Glimm ideals}

Let $A$ be a unital C*-algebra and let $\Id(A)$ denote the space of ideals of $A$ equipped with the compact Hausdorff {\em strong topology} (or Fell topology) \cite{F1962} (see also \cite{A1987}). A net $(I_i)$ in $\Id(A)$ converges to $I \in \Id(A)$ in this topology if and only if $\lim \| a + I_i \| = \|a + I\|$. We will frequently use the fact that for an ideal $I \in \Id(A)$, the set $\{ I' \in \Id(A) : I' \not \supseteq I \}$ is open.

\begin{lem} \label{lem:intersection-map-continuous}
Let $A$ and $B$ be unital C*-algebras with $A \subseteq B$. Then the intersection map from $\Id(B)$ to $\Id(A)$ is continuous in the strong topology.
\end{lem}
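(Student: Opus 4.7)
The plan is to reduce continuity to the well-known fact that, for any ideal $J$ of $B$ and any $a \in A$, one has $\|a + A \cap J\|_A = \|a + J\|_B$. This follows because the inclusion $A \hookrightarrow B$ descends to a unital $*$-homomorphism $A/(A \cap J) \to B/J$ which is injective, hence isometric.

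Concretely, I would argue as follows. Suppose $(J_i)$ is a net in $\Id(B)$ converging to $J \in \Id(B)$ in the strong topology; by the definition of the Fell topology given just above the lemma, this means $\|b + J_i\|_B \to \|b + J\|_B$ for every $b \in B$. To show that $A \cap J_i \to A \cap J$ in $\Id(A)$, I must verify that $\|a + A \cap J_i\|_A \to \|a + A \cap J\|_A$ for every $a \in A$. By the isometric identification above applied to both $J_i$ and $J$,
\[
\|a + A \cap J_i\|_A = \|a + J_i\|_B \quad \text{and} \quad \|a + A \cap J\|_A = \|a + J\|_B,
\]
so the desired convergence of quotient norms in $A$ is immediate from the convergence of the quotient norms in $B$.

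There is essentially no obstacle here beyond invoking the standard fact about isometry of the induced map on quotients; the proof is one short calculation. The only small point to keep straight is that "ideal" means closed two-sided ideal (so that the quotients are genuine C*-algebras and the quotient norm equals the C*-quotient norm), and that the Fell topology is characterized precisely by pointwise convergence of the function $I \mapsto \|\cdot + I\|$ on the respective ambient algebra — both of which are already in force in the excerpt.
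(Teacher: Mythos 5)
Your proof is correct and is essentially the same as the paper's: both rest on the fact that the injective $*$-homomorphism $A/(A\cap J)\simeq (A+J)/J\subseteq B/J$ is isometric, so $\|a+A\cap J\|=\|a+J\|$ for $a\in A$, and the Fell-topology convergence transfers directly.
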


\begin{proof}
Let $(J_i)$ be a net of ideals in $B$ converging to an ideal $J$ in $B$ in the strong topology. Then for $b \in B$,
\[
\lim \|b + J_i\| = \|b + J\|.
\]
Since $A/(A \cap J) \simeq A + J / J \subseteq B/J$, it follows that for $a \in A$,
\[
\lim \|a + A \cap J_i\| = \|a + A \cap J\|.
\]
Hence $A \cap J_i$ converges to $A \cap J$.
\end{proof}

An ideal $I$ in $A$ is {\em primal} if for all $n \geq 2$ and ideals $I_1,\ldots,I_n$ in $A$ with $I_1 \cdots I_n = 0$ there is $1 \leq i \leq n$ such that $I_i \subseteq I$. Note that the ideal $A$ is always primal. A primal ideal $I$ is {\em proper} if $I \ne A$.

Since $A$ is unital, it follows from \cite{A1987}*{Proposition 4.1} that the space of proper primal ideals is compact in the strong topology. By \cite{AS1990}*{Theorem 2.2}, if $I$ is a proper primal ideal in $A$, then there is a unique Glimm ideal $x \in \Glimm(A)$ such that $I \supseteq x$. 

\begin{defn}
Let $A$ be a unital C*-algebra with injective hull $I(A)$. An ideal $I$ of $A$ is a {\em pseudo-Glimm ideal} if there is a proper primal ideal $J$ in $I(A)$ such that $I = A \cap J$. We write $\Glimm_p(A)$ for the space of all pseudo-Glimm ideals of $A$ equipped with the relative strong topology.
\end{defn}

\begin{lem}
Let $A$ be a unital C*-algebra. The space $\Glimm_p(A)$ of pseudo-Glimm ideals of $A$ is compact in the strong topology.
\end{lem}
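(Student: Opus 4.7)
The plan is to exhibit $\Glimm_p(A)$ as the continuous image of a compact space inside $\Id(A)$. Directly from the definition, $\Glimm_p(A)$ is the image of the set $\mathcal{P}(I(A))$ of proper primal ideals of $I(A)$ under the intersection map $\Phi : \Id(I(A)) \to \Id(A)$, $J \mapsto A \cap J$. So it suffices to establish two inputs: that $\mathcal{P}(I(A))$ is compact in the strong topology on $\Id(I(A))$, and that $\Phi$ is continuous.

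For the first input, I would simply invoke the cited result \cite{A1987}*{Proposition 4.1}: since $I(A)$ is unital, its set of proper primal ideals is compact in the strong topology. (This is the only nontrivial fact needed from outside the paper, and it is already called out in the paragraph just before the definition of pseudo-Glimm ideals, so no further work is required here.) For the second input, Lemma \ref{lem:intersection-map-continuous} applied to the inclusion $A \subseteq I(A)$ gives continuity of $\Phi$.

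Combining these, $\Glimm_p(A) = \Phi(\mathcal{P}(I(A)))$ is the continuous image of a compact set, and hence is itself compact. Since $\Id(A)$ is compact Hausdorff in the strong topology, $\Glimm_p(A)$ is also a compact Hausdorff subspace.

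There is no real obstacle here; both ingredients are already established in the excerpt. The only small thing to double-check is that we do not inadvertently pick up the improper ideal $A$ as an element of $\Glimm_p(A)$, but this is automatic: if $J \subsetneq I(A)$ is a proper primal ideal then $1 \notin J$, so $1 \notin A \cap J$ and $\Phi(J)$ is a proper ideal of $A$. Thus the statement follows in a single line once Lemma \ref{lem:intersection-map-continuous} is in hand.
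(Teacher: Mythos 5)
Your proof is correct and follows exactly the paper's argument: $\Glimm_p(A)$ is the image of the compact set of proper primal ideals of $I(A)$ (by \cite{A1987}*{Proposition 4.1}) under the intersection map, which is continuous by Lemma \ref{lem:intersection-map-continuous}. The extra remark about properness of the resulting ideals is a harmless bonus.
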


\begin{proof}
By \cite{A1987}*{Proposition 4.1}, the set of proper primal ideals of $I(A)$ is compact in the strong topology. The space of pseudo-Glimm ideals $\Glimm_p(A)$ of $A$ is the image of the set of proper primal ideals in $I(A)$ under the intersection map from $\Id(I(A))$ to $\Id(A)$. The result now follows from Lemma \ref{lem:intersection-map-continuous}, which implies this map is continuous in the strong topology.
\end{proof}

We will frequently use the following result.

\begin{prop} \label{prop:injective-is-quasi-standard}
Let $(A,G)$ be a C*-dynamical system. Then every Glimm ideal in $I(A)$ and $I_G(A)$ is proper and primal. Moreover, the strong topology and the quotient topology agree on $\Glimm(I(A))$ and $\Glimm(I_G(A))$. 
\end{prop}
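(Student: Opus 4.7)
The plan is to prove the statement for $B = I_G(A)$; the case $B = I(A)$ follows by specialising to the trivial group. Let $X = \Glimm(B)$, identified with the spectrum of $Z(B)$, and for $x \in X$ write $M_x \subseteq Z(B)$ for the corresponding maximal ideal and $J_x = B M_x$ for the associated Glimm ideal. Three assertions must be verified: each $J_x$ is proper, each $J_x$ is primal, and the subspace strong topology $\tau_s$ on $X$ coincides with the quotient topology $\tau_q$. The overall strategy is to exploit the injectivity---hence monotone completeness---of $B$ together with the extremal disconnectedness of $X$ furnished by Proposition~\ref{prop:glimm-injective-hull-extremally-disconnected}.

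Properness is immediate from the Dauns-Hofmann correspondence: $J_x$ is contained in (indeed equals the intersection of) the primitive ideals of $B$ lying over $x$, and such primitives exist. For primality, the plan is to use the central covers available in a monotone complete C*-algebra: every norm-closed ideal $I \subseteq B$ admits a smallest central projection $z(I) \in Z(B)$ with $I \subseteq z(I) B$. A short argument---hinging on orthogonality of support projections of positive elements in orthogonal ideals---yields $z(I_1) \cdots z(I_n) = 0$ whenever $I_1 \cdots I_n = 0$, and a separate check (using that $X$ is Stonean) shows that a projection $p \in B$ lies in $J_x$ if and only if $z(p)(x) = 0$. Given $I_1, \ldots, I_n$ with product zero, evaluating at $x$ forces some $z(I_j)(x) = 0$, whence $z(I_j) \in M_x$ and $I_j \subseteq z(I_j) B \subseteq J_x$.

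For the topological claim, the inclusion $\tau_s \supseteq \tau_q$ is the easy direction. The identity $J_y \cap Z(B) = M_y$ gives $\|z + J_y\|_{B/J_y} = |z(y)|$ for every $z \in Z(B)$, so strong convergence of $J_{x_i}$ to $J_x$ forces $|z(x_i)| \to |z(x)|$ for all $z$, and translating by scalars in $Z(B)$ upgrades this to $z(x_i) \to z(x)$, i.e., convergence in $\tau_q$. The reverse inclusion reduces to the continuity on $(X, \tau_q)$ of the function $f_b(x) := \|b + J_x\|$ for each $b \in B$. Writing $f_b = \sqrt{f_{b^*b}}$ allows me to assume $b \geq 0$, and since $f_b$ is automatically upper semicontinuous, the only real issue is lower semicontinuity.

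The main obstacle is this last step, where monotone completeness is indispensable. The plan is to use the spectral projections $p_r = E_{[r, \infty)}(b) \in B$, which exist because $B$ is monotone complete, together with their central covers $z(p_r) \in Z(B)$; each such cover is a central projection, hence the indicator of a clopen set $U_r \subseteq X$. Combined with the characterization $p \in J_x \Leftrightarrow z(p)(x) = 0$ for projections, one obtains the formula
\[
f_b(x) \;=\; \sup\{\, r > 0 : x \in U_r \,\},
\]
which realises $f_b$ as a pointwise supremum of scalar multiples of indicator functions of open sets. Thus $f_b$ is lower semicontinuous, and the combination with the already-known upper semicontinuity gives continuity on $(X, \tau_q)$, yielding $\tau_q \supseteq \tau_s$ and completing the proof.
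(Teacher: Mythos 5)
Your argument is correct, but it is a genuinely different proof from the one in the paper. The paper disposes of this proposition in two lines by citation: injective C*-algebras are quasi-standard by the results of Ara and Mathieu \cite{AM1994}, and for quasi-standard algebras the primality of Glimm ideals and the agreement of the strong and quotient topologies are exactly \cite{AS1990}*{Theorem 3.3}. You instead reprove the relevant fragment of quasi-standardness from scratch, using only that an injective algebra $B$ is monotone complete (hence AW*) and that $\Glimm(B)$ is Stonean (Proposition \ref{prop:glimm-injective-hull-extremally-disconnected}): central covers of ideals handle primality, the identity $Z(B)\cap J_x=M_x$ handles one inclusion of topologies, and spectral projections of positive elements give lower semicontinuity of $x\mapsto\|b+J_x\|$, which together with the (correctly asserted, and true for any unital C*-algebra) upper semicontinuity yields the other inclusion. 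What the paper's route buys is brevity and access to the full Archbold--Somerset machinery; what yours buys is a self-contained proof that makes visible exactly which features of injectivity are used. Two details deserve to be written out rather than gestured at. First, the claim $z(I_1)\cdots z(I_n)=0$ when $I_1\cdots I_n=0$ needs slightly more than pairwise orthogonality of supports when $n>2$ (e.g.\ $I_1I_2I_3=0$ does not force any pairwise product to vanish); the clean route is to observe that in an AW*-algebra the left annihilator of an ideal is generated by a central projection, whence $z(I)z(J)=z(\overline{IJ})$, and then induct. Second, the ``spectral projection'' $p_r$ should be taken to be the range projection of $(b-r)_+$, which exists in a monotone complete algebra and satisfies $bp_r\geq rp_r$ and $b(1-p_r)\leq r(1-p_r)$; these two inequalities, together with your criterion that a projection $p$ lies in $J_x$ iff $z(p)(x)=0$, are exactly what the formula $f_b(x)=\sup\{r>0: x\in U_r\}$ requires.
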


\begin{proof}
Since $I(A)$ and $I_G(A)$ are injective, the results in \cite{AM1994} imply that they are quasi-standard. The result now follows from  \cite{AS1990}*{Theorem 3.3}.
\end{proof}

\begin{prop} \label{prop:inclusion-pseudo-glimm-ideals}
Let $(A,G)$ be a C*-dynamical system. Let $f : \Glimm(I(A)) \to \Glimm(A)$ and $g : \Glimm(I_G(A)) \to \Glimm(I(A))$ denote the maps induced by the inclusions $Z(A) \subseteq Z(I(A)) \subseteq Z(I_G(A))$. Then for $x \in \Glimm(A)$,
\[
x = \bigcap_{y \in f^{-1}(x)} A \cap y = \bigcap_{z \in (f \circ g)^{-1}(x)} A \cap z.
\]
\end{prop}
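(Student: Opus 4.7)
The forward containment in each equality is immediate from the description recalled earlier in Section 3.1: for any $y \in f^{-1}(x)$, the identity $K_x = Z(A) \cap K_y$ gives $K_x \subseteq K_y$ inside $Z(I(A))$, so $x = AK_x \subseteq I(A)K_y = y$, and since $x \subseteq A$ we conclude $x \subseteq A \cap y$. The same reasoning, with $f \circ g$ in place of $f$, handles the second equality. All the content of the proposition lies in the reverse containments, and since the argument is identical for $I(A)$ and $I_G(A)$, I will describe it only for $I(A)$.

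The plan is to exploit the $C(Y)$-algebra structure on $I(A)$ arising from the Gelfand identification $Z(I(A)) \cong C(Y)$, where $Y = \Glimm(I(A))$. Fix $a \in A$ with $a \in I(A)K_y$ for every $y \in f^{-1}(x)$, and write $a_y$ for the image of $a$ in the Glimm fiber $I(A)/I(A)K_y$; by hypothesis $a_y = 0$ for all $y \in f^{-1}(x)$. By Proposition \ref{prop:injective-is-quasi-standard}, $I(A)$ is quasi-standard, which ensures both that the function $y \mapsto \|a_y\|$ is upper semi-continuous on $Y$ and that $\|a\| = \sup_y \|a_y\|$. For $\epsilon > 0$, the set $U = \{y \in Y : \|a_y\| < \epsilon\}$ is then an open neighborhood of the closed set $f^{-1}(x)$, so $f(Y \setminus U)$ is a compact subset of $X \setminus \{x\}$ with $X = \Glimm(A)$. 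Urysohn's lemma in $X$ produces $e \in Z(A) \cong C(X)$ with $0 \leq e \leq 1$, $e(x) = 0$ (so $e \in K_x$), and $e \equiv 1$ on $f(Y \setminus U)$.

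Under the inclusion $Z(A) \hookrightarrow Z(I(A)) \cong C(Y)$, the element $e$ corresponds to $e \circ f$, and because $Z(I(A)) \cap I(A)K_y = K_y$, it acts on each Glimm fiber $I(A)/I(A)K_y$ as multiplication by the scalar $e(f(y))$. Therefore $\|((1-e)a)_y\| = |1 - e(f(y))|\,\|a_y\|$, which vanishes on $Y \setminus U$ (since $e \circ f \equiv 1$ there) and is at most $\epsilon$ on $U$. Taking the supremum over $y$ yields $\|a - ea\| \leq \epsilon$. Since $e \in K_x$ and $a \in A$, the product $ea$ lies in $K_x \cdot A \subseteq AK_x$; as $AK_x$ is closed and $\epsilon > 0$ was arbitrary, this forces $a \in AK_x = x$, completing the reverse containment. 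The identical argument with $I_G(A)$, $Y = \Glimm(I_G(A))$, and the map $f \circ g$ replacing $I(A)$, $\Glimm(I(A))$, and $f$ proves the second equality. The main technical hurdle is the cutoff construction, which hinges on upper semi-continuity of the fiber-norm function; this is exactly where the quasi-standardness of $I(A)$ and $I_G(A)$ provided by Proposition \ref{prop:injective-is-quasi-standard} is essential.
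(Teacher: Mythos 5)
Your proof is correct and follows essentially the same route as the paper's: fix $a$ in the intersection, use (upper semi-)continuity of the fibre norm on $\Glimm(I(A))$ to produce a compact set $F_\epsilon = Y\setminus U$ disjoint from $f^{-1}(x)$, push it forward to a compact set in $\Glimm(A)$ missing $x$, and apply Urysohn in $Z(A)$ to get a central cutoff $e\in K_x$ with $\|a-ea\|\le\epsilon$, forcing $a\in AK_x$. The only cosmetic difference is that you justify the semicontinuity via quasi-standardness (Proposition \ref{prop:injective-is-quasi-standard}) where the paper cites \cite{P1979}*{Proposition 4.4.4} directly; both are valid, and your fibrewise verification of the norm estimate is a slightly more explicit version of the same computation.
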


\begin{proof}
We will require the fact from Proposition \ref{prop:injective-is-quasi-standard} that the strong topology and the quotient topology agree on $\Glimm(I(A))$ and $\Glimm(I_G(A))$.

Fix $x \in \Glimm(A)$ and let $K_x$ denote the ideal in $Z(A)$ corresponding to $x$. Similarly, for $y \in f^{-1}(x)$, let $K_y$ denote the ideal in $Z(I(A))$ corresponding to $y$. Then $K_x = Z(A) \cap K_y$. Since $x = AK_x$ and $y = I(A)K_y$, it follows that $x \subseteq A \cap y$. Hence $x \subseteq \bigcap_{y \in f^{-1}(x)} A \cap y$.

For the reverse inclusion, suppose $a \in \bigcap_{y \in f^{-1}(x)} A \cap y$ with $\|a\| = 1$. For $\epsilon > 0$, it follows from \cite{P1979}*{Proposition 4.4.4} that the set
\[
F_\epsilon = \{y \in \Glimm(I(A)) : \|a + y\| \geq \epsilon \}
\]
is compact, and by construction it is disjoint from $f^{-1}(x)$. By the continuity of $f$, $f(F_\epsilon)$ is a closed set disjoint from $x$. Hence by Urysohn's Lemma there is $a' \in Z(A)$ with $0 \leq a' \leq 1$ such that $a'(x') = 1$ for $x' \in f(F_\epsilon)$ and $a'(x) = 0$. Hence $\|aa' - a\| < \epsilon$. It follows that $a \in x$ and we conclude that $x = \bigcap_{y \in f^{-1}(x)} A \cap y$.

The argument for $\Glimm(I_G(A))$ is similar.
\end{proof}

\begin{prop} \label{prop:intersections-glimm-ideals-are-pseudo-glimm}
Let $(A,G)$ be a C*-dynamical system. Every Glimm ideal $y \in \Glimm(I(A))$ is proper and primal. Hence the intersection $A \cap y$ is a pseudo-Glimm ideal in $A$. Similarly, for every Glimm ideal $z \in \Glimm(I_G(A))$, the intersection $I(A) \cap z$ is a proper primal ideal in $I(A)$. Hence the intersection $A \cap z$ is a pseudo-Glimm ideal in $A$.
\end{prop}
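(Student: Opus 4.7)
The first statement follows immediately from Proposition~\ref{prop:injective-is-quasi-standard}, since $I(A)$ is injective: every Glimm ideal $y$ of $I(A)$ is proper and primal, and hence $A \cap y$ lies in $\Glimm_p(A)$ by the definition of a pseudo-Glimm ideal.

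For the second statement, fix $z \in \Glimm(I_G(A))$; by Proposition~\ref{prop:injective-is-quasi-standard}, $z$ is proper and primal in $I_G(A)$. It suffices to show that $I(A) \cap z$ is a proper primal ideal of $I(A)$, for then the identity $A \cap z = A \cap (I(A) \cap z)$ gives $A \cap z \in \Glimm_p(A)$ by the definition of a pseudo-Glimm ideal. Properness of $I(A) \cap z$ is automatic since $1 \notin z$, so the whole content lies in primality.

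My plan is to deduce primality of $I(A) \cap z$ by lifting ideals from $I(A)$ to $I_G(A)$. Given ideals $J_1, \ldots, J_n$ of $I(A)$ with $J_1 \cdots J_n = 0$, I would set $\tilde J_i := \overline{I_G(A) J_i I_G(A)}$, the closed two-sided ideal of $I_G(A)$ generated by $J_i$. Granted (a) $I(A) \cap \tilde J_i = J_i$ and (b) $\tilde J_1 \cdots \tilde J_n = 0$, primality of $z$ produces an index $i$ with $\tilde J_i \subseteq z$, whence $J_i \subseteq I(A) \cap z$. Condition (a) should follow from the existence of a conditional expectation $E : I_G(A) \to I(A)$ (available because $I(A)$ is injective) together with the fact that $I(A)$ lies in the multiplicative domain of $E$ and so $E$ is $I(A)$-bilinear.

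Condition (b) is the main obstacle: while $J_1 \cdots J_n = 0$ controls pure products $j_1 \cdots j_n$ with $j_i \in J_i$, the ideal product $\tilde J_1 \cdots \tilde J_n$ contains words $j_1 c_1 j_2 \cdots c_{n-1} j_n$ with arbitrary $c_k \in I_G(A)$ interposed between the $j_i$'s. I would attack this via the AW*-structure of the injective envelope $I_G(A)$ and the extremal disconnectedness of $\Glimm(I_G(A))$ supplied by Proposition~\ref{prop:glimm-injective-hull-extremally-disconnected}, reducing --- via a density argument on central projections --- to the case where each $J_i$ is generated by central projections coming from $Z(I(A)) \subseteq Z(I_G(A))$, in which situation the interposed $c_k$ can be commuted past the $j_i$'s without affecting the product. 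Should this direct extension approach prove too delicate, an alternative is to establish the stronger identification $I(A) \cap z = g(z)$, where $g : \Glimm(I_G(A)) \to \Glimm(I(A))$ is the canonical continuous surjection induced by $Z(I(A)) \subseteq Z(I_G(A))$; this would exhibit $I(A) \cap z$ as a Glimm ideal of $I(A)$, which is proper and primal by Proposition~\ref{prop:injective-is-quasi-standard}.
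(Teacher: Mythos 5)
Your treatment of the first statement is fine and is exactly the paper's. The second part, however, has a genuine gap, and both of your routes run into trouble. For the lifting route, condition (b) is not merely delicate but false for general C*-inclusions: with $\bC \oplus \bC \subseteq M_2$, the ideals $J_1 = \bC \oplus 0$ and $J_2 = 0 \oplus \bC$ satisfy $J_1 J_2 = 0$, yet each generates all of $M_2$. So you would need to exploit special structure of $I(A) \subseteq I_G(A)$, and the proposed reduction to ideals generated by central projections does not go through: a closed ideal of an injective C*-algebra need not be generated by central projections (the compacts in $\B(H)$ already fail this), so there is no ``density argument'' that replaces the $J_i$ by centrally generated ideals while preserving $J_1 \cdots J_n = 0$. (Even your condition (a) is not immediate from bimodularity of the conditional expectation, since in a word $c j c'$ the element $j \in J_i$ sits in the middle rather than at the ends.) Your fallback is also too strong: the equality $I(A) \cap z = g(z)$ would say that $I(A) \cap z$ is itself a Glimm ideal of $I(A)$, and the paper's Example \ref{ex:pseudo-glimm} shows the analogous equality already fails for $A \subseteq I(A)$, where $A \cap (\B(H) \oplus 0) = \K(H) \oplus 0$ strictly contains the Glimm ideal $f(\B(H)\oplus 0) = 0$. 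Since $Z(I(A))$ can be a proper subalgebra of $Z(I_G(A))$, the same phenomenon is to be expected for $I(A) \subseteq I_G(A)$.

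The missing idea is that only the containment $g(z) \subseteq I(A) \cap z$ is needed, combined with the elementary observation that a proper ideal containing a primal ideal is itself primal: if $J_1 \cdots J_n = 0$ in $I(A)$, then primality of $y := g(z)$ yields some $J_i \subseteq y \subseteq I(A) \cap z$. The containment $y \subseteq I(A) \cap z$ is immediate from $K_y = Z(I(A)) \cap K_z$, since then $y = I(A)K_y \subseteq I(A)$ and $y \subseteq I_G(A)K_z = z$. As $y$ is proper and primal by Proposition \ref{prop:injective-is-quasi-standard}, and $I(A) \cap z$ is proper as you note, this finishes the argument; it is precisely the paper's proof, which invokes Proposition \ref{prop:inclusion-pseudo-glimm-ideals} for the containment.
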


\begin{proof}
By Proposition \ref{prop:injective-is-quasi-standard}, every Glimm ideal in $I(A)$ is proper and primal. Fix a Glimm ideal $z \in \Glimm(I_G(A))$. Since $z$ is a proper ideal, $I(A) \cap z$ is a proper ideal in $I(A)$. By Proposition \ref{prop:inclusion-pseudo-glimm-ideals}, there is a Glimm ideal $y \in \Glimm(I(A))$ such that $I(A) \cap z \supseteq y$. From above, $y$ is primal. Hence letting $J_1,\ldots,J_n$ be ideals in $I(A)$ such that $J_1 \cdots J_n = 0$, there is $1 \leq i \leq n$ such that $J_i \subseteq y \subseteq I(A) \cap z$. It follows that $I(A) \cap z$ is primal. Hence $A \cap z = A \cap (I(A) \cap z)$ is a pseudo-Glimm ideal in $A$.
\end{proof}

The proof of Proposition \ref{prop:inclusion-pseudo-glimm-ideals} provides more information about pseudo-Glimm ideals.

\begin{lem} \label{lem:inclusion-pseudo-glimm-ideals}
Let $A$ be a unital C*-algebra. Then every pseudo-glimm ideal in $A$ contains a Glimm ideal in $A$, and every Glimm ideal in $A$ is contained in a pseudo-Glimm ideal in $A$.
\end{lem}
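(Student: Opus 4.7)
The plan is to derive both containment statements from the structural results already established, particularly Proposition \ref{prop:injective-is-quasi-standard} (every Glimm ideal of $I(A)$ is proper and primal) and Proposition \ref{prop:inclusion-pseudo-glimm-ideals} (for $x \in \Glimm(A)$ and any $y \in \Glimm(I(A))$ with $f(y) = x$, we have $x \subseteq A \cap y$), together with the Archbold--Somerset fact invoked earlier in the section: every proper primal ideal in a unital C*-algebra contains a unique Glimm ideal.

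For the first assertion, I would start with an arbitrary pseudo-Glimm ideal $I$ of $A$, which by definition can be written as $I = A \cap J$ for some proper primal ideal $J$ of $I(A)$. By the Archbold--Somerset result cited above, there is a Glimm ideal $y \in \Glimm(I(A))$ with $y \subseteq J$. Set $x = f(y) \in \Glimm(A)$, where $f : \Glimm(I(A)) \to \Glimm(A)$ is the map induced by $Z(A) \subseteq Z(I(A))$. Proposition \ref{prop:inclusion-pseudo-glimm-ideals} then yields $x \subseteq A \cap y \subseteq A \cap J = I$, so the Glimm ideal $x$ of $A$ is contained in $I$, as required.

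For the second assertion, start with a Glimm ideal $x \in \Glimm(A)$. Since the map $f : \Glimm(I(A)) \to \Glimm(A)$ is surjective (noted in Section~\ref{sec:glimm-ideals}), there is some $y \in \Glimm(I(A))$ with $f(y) = x$. By Proposition \ref{prop:injective-is-quasi-standard}, $y$ is a proper primal ideal of $I(A)$, so by definition $A \cap y$ is a pseudo-Glimm ideal of $A$. Proposition \ref{prop:inclusion-pseudo-glimm-ideals} again gives $x \subseteq A \cap y$, so $x$ is contained in the pseudo-Glimm ideal $A \cap y$.

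I do not expect any serious obstacle: the work has already been done in Propositions \ref{prop:injective-is-quasi-standard} and \ref{prop:inclusion-pseudo-glimm-ideals}, and the lemma is essentially a bookkeeping consequence. The only subtlety worth flagging is the appeal to the Archbold--Somerset theorem guaranteeing that every proper primal ideal sits above a (unique) Glimm ideal; this is the input that lets the first half go through without needing to produce the Glimm ideal of $I(A)$ by hand.
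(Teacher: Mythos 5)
Your proof is correct and follows essentially the same route as the paper's: locate the unique Glimm ideal of $I(A)$ beneath the primal ideal $J$ (via Archbold--Somerset) and push it down through $f$ for the first containment, and lift $x$ through the surjection $f$ and use $x \subseteq A \cap y$ for the second. If anything your write-up is slightly cleaner, since you cite the statement of Proposition \ref{prop:inclusion-pseudo-glimm-ideals} directly rather than ``arguing as in its proof,'' and you record the containment $x \subseteq A \cap y$ in the correct direction, where the paper's printed proof of the second half has an apparent typo reading $x \supseteq A \cap y$.
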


\begin{proof}
 Let $f : \Glimm(I(A)) \to \Glimm(A)$ denote the map induced by the inclusions $Z(A) \subseteq Z(I(A))$.
 
 Let $I \in \Glimm_p(A)$ be a pseudo-Glimm ideal. Then there is a proper primal ideal $J$ in $I(A)$ such that $I = A \cap J$. Let $y \in \Glimm(I(A))$ be the unique Glimm ideal contained in $J$. Then $x = f(y) \in \Glimm(A)$. Arguing as in the proof of Proposition \ref{prop:inclusion-pseudo-glimm-ideals}, we obtain $x \subseteq A \cap y \subseteq A \cap J = I$.
 
On the other hand, let $x \in \Glimm(A)$ be a Glimm ideal. Since the map $f$ is surjective, there is $y \in \Glimm(I(A))$ such that $x = f(y)$. Arguing as in the proof of Proposition \ref{prop:inclusion-pseudo-glimm-ideals}, $x \supseteq A \cap y$. 
\end{proof}

The motivation for the name ``pseudo-Glimm ideal'' is based on the fact that in many cases, pseudo-Glimm ideals are actually Glimm ideals. For example, the next result shows that this is the case if $A$ has Hausdorff primitive ideal space.

\begin{prop}
Let $A$ be a unital C*-algebra with Hausdorff primitive ideal space. Then the pseudo-Glimm ideals are precisely the Glimm ideals, i.e. $\Glimm_p(A) = \Glimm(A)$.
\end{prop}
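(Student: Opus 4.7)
The plan is to establish that under the hypothesis that $\Prim(A)$ is Hausdorff, each Glimm ideal of $A$ coincides with a primitive ideal and is in fact a maximal ideal. Combined with Lemma \ref{lem:inclusion-pseudo-glimm-ideals}, this will force $\Glimm_p(A) = \Glimm(A)$.

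First I would show that each $P \in \Prim(A)$ is a maximal ideal. The primitive ideals of $A/P$ correspond bijectively to the primitive ideals of $A$ containing $P$; but $\{P\}$ is closed in $\Prim(A)$ since $\Prim(A)$ is Hausdorff and hence $T_1$, so $P$ is the only such ideal. Consequently $A/P$ has exactly one primitive ideal, namely $\{0\}$. Since every closed ideal in a C*-algebra is the intersection of the primitive ideals containing it, $A/P$ is simple, and $P$ is maximal.

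Next I would identify Glimm ideals with primitive ideals. Because $A$ is unital and $\Prim(A)$ is Hausdorff, $\Prim(A)$ is compact Hausdorff, so the complete regularization map $\Prim(A) \to \Glimm(A)$ is a homeomorphism and the two spaces are in natural bijection. Fix $P \in \Prim(A)$ and let $K_P$ be the corresponding maximal ideal of $Z(A)$. By the Dauns--Hofmann theorem, any $f \in K_P$ and $a \in A$ satisfy $fa - f(P)a = fa \in P$, so the Glimm ideal $AK_P$ is contained in $P$. Conversely, if $Q \in \Prim(A)$ contains $AK_P$, the same theorem forces $K_P \subseteq K_Q$ and hence $Q = P$. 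Therefore $A/AK_P$ has unique primitive ideal $P/AK_P$, which by the semisimplicity of C*-algebras must be zero, giving $AK_P = P$.

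Finally I would assemble the conclusion. A pseudo-Glimm ideal $I = A \cap J$ with $J$ a proper primal ideal of $I(A)$ is itself proper, since $1 \in I(A) \setminus J$ implies $1 \notin I$. By Lemma \ref{lem:inclusion-pseudo-glimm-ideals}, $I$ contains some Glimm ideal $x$, which is maximal by the above, so $I = x$. In the other direction, given a Glimm ideal $x$, choose $y \in \Glimm(I(A))$ with $f(y) = x$ via the surjection of the excerpt; Proposition \ref{prop:inclusion-pseudo-glimm-ideals} gives $x \subseteq A \cap y$, where $A \cap y$ is a proper pseudo-Glimm ideal, and maximality of $x$ forces equality. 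The main obstacle, already settled above, is the passage from ``unique primitive ideal'' to ``simple,'' which rests on semisimplicity of C*-algebras; beyond this the argument is purely formal.
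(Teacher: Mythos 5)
Your proof is correct and follows essentially the same route as the paper: both arguments rest on the maximality of Glimm ideals when $\Prim(A)$ is Hausdorff, combined with Lemma \ref{lem:inclusion-pseudo-glimm-ideals} (every pseudo-Glimm ideal contains a Glimm ideal, and every Glimm ideal is contained in a pseudo-Glimm ideal). The only difference is that the paper simply cites the maximality fact, whereas you supply a self-contained and correct derivation of it via the Dauns--Hofmann identification of Glimm ideals with (maximal) primitive ideals.
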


\begin{proof}
If the primitive ideal space of $A$ is Hausdorff, then every Glimm ideal of $A$ is maximal (see e.g. \cite{MR2007}). For a proper primal ideal $J$ in $I(A)$, Lemma \ref{lem:inclusion-pseudo-glimm-ideals} implies that $A \cap J$ is a proper ideal in $A$ that contains a Glimm ideal, say $x$. By the maximality of $x$ it follows that $A \cap J = x$. Hence every pseudo-Glimm ideal is a Glimm ideal. The fact that every Glimm ideal is a pseudo-Glimm follows from the maximality of Glimm ideals and Lemma \ref{lem:inclusion-pseudo-glimm-ideals}.
\end{proof}

As the next example demonstrates, there are C*-algebras with pseudo-Glimm ideals that are not Glimm ideals. This phenomenon is closely related to the fact that the center of the injective envelope of $A$ is not always equal to the injective envelope of the center of $A$ (see e.g. \cite{H1982b} and \cite{S1982}). We are grateful to R.S. Bryder for pointing this out.

\begin{example} \label{ex:pseudo-glimm}
Let $H$ be an infinite dimensional Hilbert space and let $\mathcal{K}(H)$ denote the C*-algebra of compact operators on $H$. Let $A = \bC1_{H \oplus H} + \mathcal{K}(H) \oplus \mathcal{K}(H)$. Then it is easy to see that $I(A) = \B(H) \oplus \B(H)$. Hence
\[
Z(I(A)) = \bC1_H \oplus \bC1_H,
\]
while
\[
I(Z(A)) = Z(A) = \bC1_{H \oplus H}.
\]
In particular, $I(Z(A)) \subsetneq Z(I(A))$. 

The Glimm space of $I(A)$ is $\Glimm(I(A)) = \{0\}$. However, the proper primal ideals in $I(A)$ are $\B(H) \oplus \K(H)$, $\K(H) \oplus \B(H)$, $\B(H) \oplus 0$ and $0 \oplus \B(H)$, so the pseudo-Glimm space of $A$ is
\[
\Glimm_p(A) = \{\K(H) \oplus \K(H), \K(H) \oplus 0, 0 \oplus \K(H) \}.
\]
\end{example}

\begin{prop} \label{prop:quasi-glimm-ideals-covering}
Let $(A,G)$ be a C*-dynamical system with injective hull $(I_G(A),G)$. If $Z \subseteq \Glimm(I_G(A))$ is an invariant closed subset with the property that
\[
\bigcap_{z \in Z} A \cap z = 0,
\]
then $Z = \Glimm(I_G(A))$.
\end{prop}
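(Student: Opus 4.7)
The plan is to exploit essentiality of the injective hull together with the Hausdorffness of $\Glimm(I_G(A))$. First I would set $I_Z = \bigcap_{z \in Z} z$. Since each Glimm ideal is a two-sided ideal of $I_G(A)$ and $Z$ is $G$-invariant, $I_Z$ is a $G$-invariant closed two-sided ideal of $I_G(A)$. The hypothesis $\bigcap_{z \in Z} A \cap z = 0$ reads simply as $A \cap I_Z = 0$.

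Next I would consider the equivariant quotient $*$-homomorphism $\pi : I_G(A) \to I_G(A)/I_Z$. Its restriction to $A$ has trivial kernel, hence is isometric and therefore a complete order embedding; so $\pi|_A$ is an embedding of noncommutative dynamical systems. Because the injective hull is a minimal (and hence essential) extension of $(A,G)$, this forces $\pi$ itself to be an embedding. In particular $\pi$ is injective, so $I_Z = 0$.

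To conclude, suppose for contradiction that $Z \subsetneq \Glimm(I_G(A))$ and pick $z_0 \in \Glimm(I_G(A)) \setminus Z$. By Proposition \ref{prop:injective-is-quasi-standard}, $\Glimm(I_G(A))$ is compact Hausdorff and each $z$ is of the form $I_G(A)K_z$ with $K_z \subseteq Z(I_G(A))$ the corresponding maximal ideal. Urysohn's lemma then produces a nonzero $f \in Z(I_G(A)) \cong C(\Glimm(I_G(A)))$ with $f|_Z = 0$ and $f(z_0) \ne 0$. But then $f \in \bigcap_{z \in Z} K_z \subseteq I_Z = 0$, a contradiction.

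The main obstacle is the middle step: one must know that essentiality of the equivariant inclusion $(A,G) \subseteq (I_G(A),G)$ forces a $G$-equivariant quotient $*$-homomorphism to be injective as soon as it is injective on $A$. The passage from ``injective on $A$'' to ``embedding of operator systems'' is automatic because injective $*$-homomorphisms are completely isometric, so the genuine substance is the essentiality built into Hamana's construction of the injective hull. Everything else reduces to bookkeeping in the center of $I_G(A)$, whose spectrum is the compact Hausdorff space $\Glimm(I_G(A))$.
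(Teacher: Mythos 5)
Your proof is correct and follows essentially the same route as the paper: the paper also forms the invariant ideal $K=\bigcap_{z\in Z}z$, notes $A\cap K=0$, and invokes essentiality of $(A,G)\subseteq(I_G(A),G)$ to get $K=0$. You have merely made explicit the two steps the paper compresses, namely that essentiality is applied via the equivariant quotient $*$-homomorphism onto $I_G(A)/K$, and that $K=0$ forces the closed set $Z$ to be all of $\Glimm(I_G(A))$ by a Urysohn argument in $Z(I_G(A))$.
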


\begin{proof}
Let $Z \subseteq \Glimm(I_G(A))$ be an invariant closed subset with the property that $\cap_{z \in Z} A \cap z = 0$. Let $K$ denote the invariant ideal in $I_G(A)$ defined by $K = \cap_{z \in Z} z$. Then
\[
A \cap K = \bigcap_{z \in Z} A \cap z = 0.
\]
By essentiality, it follows that $K = 0$, and we conclude that $Z = \Glimm(I_G(A))$.
\end{proof}

\section{Quasi-stabilizer subgroups}

\begin{defn}
Let $(A,G)$ be a C*-dynamical system. For $s \in G$, let $p_s \in I(A)$ denote the largest $I(\alpha)_s$-invariant projection in $I(A)$ such that $I(\alpha)_s|_{I(A)p_s}$ is inner. For a pseudo-Glimm ideal $I \in \Glimm_p(A)$, the {\em quasi-stabilizer subgroup} $G_I \leq G$ is defined by
\begin{align*}
G_I = \{s \in G : 1-p_s \in J &\text{ for every proper primal ideal } J \text{ in } I(A)  \\
&\text{ with } I = A \cap J \}.
\end{align*}
\end{defn}

\begin{rem}
Let $(A,G)$ be a C*-dynamical system and let $I$ be a pseudo-Glimm ideal in $A$. To see that the corresponding quasi-stabilizer $G_I$ is actually a group, suppose $s,t \in G_I$. If $J$ is a proper primal ideal in $I(A)$ with $I = A \cap J$, then $1-p_s,1-p_t \in J$. Since $p_s p_t \leq p_{st}$,
\[
1-p_{st} \leq 1-(p_s p_t) = (1-p_s) + (1-p_t) - (1-p_s) (1-p_t) \in J.
\]
In particular, $1-p_{st} \in J$ and hence $st \in G_I$.
\end{rem}

\begin{lem} \label{lem:only-one-inclusion-must-hold}
Let $(B,G)$ be a C*-dynamical system with $B$ injective. For $s \in G$, let $p_s \in B$ denote the largest $B_s$-invariant projection in $B$ such that $B_s|_{B p_s}$ is inner. Then for a proper primal ideal $J$ in $B$, exactly one of $p_s$ or $1-p_s$ belongs to $J$.
\end{lem}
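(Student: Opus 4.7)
The proof reduces to invoking Proposition \ref{prop:largest-projection-inner-outer} together with the definition of a proper primal ideal.

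The plan is as follows. Since $B$ is injective, we have $B = I(B)$. Applying Proposition \ref{prop:largest-projection-inner-outer} to $B$ in place of $A$, the projection $p_s$ is automatically central in $B$, and consequently so is $1-p_s$. Therefore the sets $B p_s$ and $B(1-p_s)$ are (closed, two-sided) ideals of $B$, and by centrality
\[
(B p_s)(B(1-p_s)) = B p_s (1-p_s) B = 0.
\]

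Next I invoke the defining property of proper primality: for any finite family of ideals of $B$ whose product is zero, at least one of them is contained in $J$. Applied to the pair $B p_s$ and $B(1-p_s)$, this forces $B p_s \subseteq J$ or $B(1-p_s) \subseteq J$. Since $B$ is unital, this gives $p_s \in J$ or $1-p_s \in J$.

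To see these alternatives are mutually exclusive, note that if both $p_s$ and $1-p_s$ were in $J$, then $1 = p_s + (1-p_s)$ would lie in $J$, contradicting properness. Hence exactly one of $p_s$ or $1-p_s$ lies in $J$. The only conceptual step here is recognizing that centrality of $p_s$ is what lets primality be applied to the complementary pair of principal ideals; there is no serious obstacle once Proposition \ref{prop:largest-projection-inner-outer} is in hand.
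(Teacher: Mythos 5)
Your proof is correct, and it takes a slightly different (and more self-contained) route than the paper for the ``at least one'' half. The paper first invokes the Archbold--Somerset result quoted in Section \ref{sec:injectivity-and-essentiality} that a proper primal ideal $J$ contains a unique Glimm ideal $y$, and then observes that $y$ lies in exactly one of the clopen sets $\supp(p_s)$, $\supp(1-p_s)$, whence the corresponding complementary projection lies in $y \subseteq J$. You instead apply the definition of primality directly to the two complementary central ideals $Bp_s$ and $B(1-p_s)$, whose product vanishes, which forces one of them (and hence the corresponding projection, since $B$ is unital) into $J$. Both arguments hinge on the same key fact, namely that Proposition \ref{prop:largest-projection-inner-outer} makes $p_s$ central; but your version avoids the external input about Glimm ideals entirely and uses only the definition of a primal ideal, which is arguably cleaner for this particular lemma. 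The mutual exclusivity step (properness of $J$ rules out $1 = p_s + (1-p_s) \in J$) is identical in both.
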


\begin{proof}
The fact that $J$ is proper implies that at most one of $p_s$ or $1-p_s$ belongs to $J$. Let $y \in \Glimm(B)$ be the unique Glimm ideal contained in $J$. Then exactly one of $y \in \supp(p_s)$ or $y \in \supp(1-p_s)$ holds. In the first case $1-p_s \in y \subseteq J$, and in the second case $p_s \in y \subseteq J$.
\end{proof}

\begin{rem} \label{ref:quasi-stabilizer-injective}
Let $(B,G,\beta)$ be a C*-dynamical system such that $B$ is injective. It follows from Proposition \ref{prop:injective-is-quasi-standard} that every Glimm ideal in $B$ is pseudo-Glimm. For $s \in G$, let $p_s \in B$ denote the largest $\beta_s$-invariant projection such that $\beta_s|_{Bp_s}$ is inner. For $y \in \Glimm(B)$, $1-p_s \in Bp_s$ if and only if $y \in \supp(p_s)$. Thus the corresponding quasi-stabilizer subgroup is
\[
G_y = \{s \in G : y \in \supp(p_s) \}.
\]
\end{rem}

\begin{prop}
Let $(A,G)$ be a C*-dynamical system. For $s \in G$, $\alpha_s$ is properly outer if and only if $s \notin G_I$ for every pseudo-Glimm ideal $I \in \Glimm_p(A)$.
\end{prop}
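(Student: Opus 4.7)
The proof proceeds by combining Propositions \ref{prop:largest-projection-inner-outer} and \ref{prop:largest-ideal-inner-outer} with Lemma \ref{lem:only-one-inclusion-must-hold}. The two driving facts are: (i) by the definition of proper outerness together with Proposition \ref{prop:largest-ideal-inner-outer}, $\alpha_s$ is properly outer if and only if the largest invariant ideal $J_s = A \cap I(A)(1-p_s)$ on which $\alpha_s$ is properly outer equals all of $A$, equivalently the largest invariant quasi-inner ideal $I_s = A \cap I(A) p_s$ is zero; and (ii) since $p_s$ and $1-p_s$ are central in $I(A)$, for any ideal $J \subseteq I(A)$ the membership $1-p_s \in J$ (resp.\ $p_s \in J$) is equivalent to $I(A)(1-p_s) \subseteq J$ (resp.\ $I(A)p_s \subseteq J$).

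For the forward direction, assume $\alpha_s$ is properly outer, so $J_s = A$ by (i). If some $I \in \Glimm_p(A)$ had $s \in G_I$, pick a proper primal $J \subseteq I(A)$ with $A \cap J = I$ (such $J$ exists by definition of pseudo-Glimm); then $1-p_s \in J$ by definition of $G_I$, whence $A = J_s \subseteq A \cap J = I$ by (ii). But then $1 \in J$, contradicting the properness of $J$.

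For the reverse direction I argue the contrapositive. Suppose $\alpha_s$ is not properly outer, so $I_s \neq 0$ by (i). Since $I(A)$ is injective, it is quasi-standard by Proposition \ref{prop:injective-is-quasi-standard}, and a standard consequence of quasi-standardness is that the Glimm ideals of $I(A)$ intersect to zero. Fix $0 \neq a \in I_s$, pick $y \in \Glimm(I(A))$ with $a \notin y$, and set $I := A \cap y$, which lies in $\Glimm_p(A)$ by Proposition \ref{prop:intersections-glimm-ideals-are-pseudo-glimm}. Since $a \in I_s \setminus y$ and $I_s \subseteq A$, we have $I_s \not\subseteq I$. Now let $J \subseteq I(A)$ be any proper primal with $A \cap J = I$. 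If $p_s \in J$, then by (ii) $I(A)p_s \subseteq J$, so $I_s \subseteq A \cap J = I$, a contradiction. Hence $p_s \notin J$, and Lemma \ref{lem:only-one-inclusion-must-hold} forces $1-p_s \in J$. As this holds for every such $J$, we obtain $s \in G_I$.

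The only nontrivial external fact required is that the Glimm ideals of the injective hull $I(A)$ intersect to zero; this follows from quasi-standardness (Proposition \ref{prop:injective-is-quasi-standard}) together with the observation that each Glimm ideal in a quasi-standard C*-algebra coincides with the intersection of the primitive ideals lying above it, whose overall intersection is zero. Beyond that, the argument is a short manipulation of the complementary central projections $p_s$ and $1-p_s$ using the dichotomy of Lemma \ref{lem:only-one-inclusion-must-hold}.
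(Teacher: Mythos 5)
Your proof is correct and follows essentially the same route as the paper: both directions reduce to the dichotomy of Lemma \ref{lem:only-one-inclusion-must-hold} for the central projection $p_s$ in proper primal ideals of $I(A)$, together with the identification of proper outerness with the vanishing of $A \cap I(A)p_s$ and the fact that the Glimm ideals of $I(A)$ intersect to zero. The only difference is cosmetic: you run the reverse implication as a contrapositive by selecting a single Glimm ideal missing a nonzero element of $I_s$, whereas the paper intersects $A \cap y$ over all $y \in \Glimm(I(A))$; these are equivalent.
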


\begin{proof}
Fix $s \in G$, let $p_s \in I(A)$ denote the largest $I(\alpha)_s$-invariant projection in $I(A)$ such that $I(\alpha)_s|_{I(A)p_s}$ is inner. Suppose $\alpha_s$ is properly outer, so that $p_s = 0$. Then for every proper primal ideal $J$ in $I(A)$, $1 = 1-p_s \notin J$. Hence for any pseudo-Glimm ideal $I$ in $\Glimm_p(A)$, $s \notin G_I$.

Conversely, suppose $s \notin G_I$ for every pseudo-Glimm ideal $I \in \Glimm_p(A)$. Then for every Glimm ideal $y \in \Glimm(I(A))$, there is a proper primal ideal $J$ in $I(A)$ such that $A \cap y = A \cap J$ and $1-p_s \notin J$. Then by Lemma \ref{lem:only-one-inclusion-must-hold}, $p_s \in J$, so $A \cap y = A \cap J \supseteq A \cap  I(A)p_s$. Since this holds for all $y \in \Glimm(I(A))$, it follows that
\[
0 = \bigcap_{y \in \Glimm(I(A))} A \cap y \supseteq A \cap I(A)p_s.
\]
Hence $A \cap I(A)p_s = 0$ and $\alpha_s$ is properly outer.
\end{proof}

\begin{prop} \label{prop:quasi-stabilizer-for-injective-is-amenable}
Let $(A,G)$ be a C*-dynamical system with injective hull $(I_G(A),G)$. Then for $z \in \Glimm(I_G(A))$, the quasi-stabilizer subgroup $G_z$ is amenable.
\end{prop}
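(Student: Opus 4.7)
The plan is to exhibit an invariant mean on $\ell^\infty(G_z)$, witnessing amenability of $G_z$, by exploiting the fact that the $G_z$-action descends to an inner action on the quotient $I_G(A)/z$ together with the $G$-injectivity of $I_G(A)$. Writing $B = I_G(A)$, $H = G_z$, and letting $\pi_z : B \to B/z$ denote the quotient map, I would first show that $H$ fixes $z$ pointwise in $\Glimm(B)$. For $s \in H$, Remark \ref{ref:quasi-stabilizer-injective} gives $z \in \supp(q_s)$, where $q_s \in B$ is the central $I_G(\alpha)_s$-invariant projection such that $I_G(\alpha)_s|_{Bq_s}$ is inner, implemented by a unitary $v_s \in q_s B q_s$. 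Since $q_s$ is central, innerness forces $I_G(\alpha)_s$ to fix $Z(B)q_s \cong C(\supp(q_s))$ pointwise, so via Gelfand duality the homeomorphism of $\Glimm(B)$ induced by $s$ fixes the clopen set $\supp(q_s)$ pointwise; in particular $sz = z$. Hence $I_G(\alpha)$ descends to an action $\bar\alpha$ of $H$ on $B/z$, and since $\pi_z(q_s) = 1$, the unitary $V_s := \pi_z(v_s)$ implements $\bar\alpha_s$ on all of $B/z$.

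Next, by $G$-injectivity of $B$, which restricts to $H$-injectivity for the subgroup $H \leq G$, there is an $H$-equivariant unital completely positive map $\Phi : \ell^\infty(H) \to B$, where $H$ acts on $\ell^\infty(H)$ by left translation. Composing with $\pi_z$ yields an $H$-equivariant u.c.p.\ map $\Phi_z := \pi_z \circ \Phi : \ell^\infty(H) \to B/z$ satisfying $\Phi_z(s \cdot f) = V_s \Phi_z(f) V_s^*$ for $s \in H$ and $f \in \ell^\infty(H)$. Given an $\bar\alpha$-invariant state $\eta$ on $B/z$---that is, a state invariant under conjugation by every $V_s$---the composition $m := \eta \circ \Phi_z$ is then an $H$-invariant mean on $\ell^\infty(H)$, since $m(s \cdot f) = \eta(V_s \Phi_z(f) V_s^*) = \eta(\Phi_z(f)) = m(f)$, proving $H$ amenable.

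I expect the main obstacle to be the production of the $\bar\alpha$-invariant state $\eta$ on $B/z$. My plan is to leverage the rigidity of the injective hull: since $\bar\alpha$ is inner on $B/z$, its action on the state space is given by conjugation by the unitaries $V_s$, and the sought $\eta$ is a state commuting with all $\Ad(V_s)$. I would construct $\eta$ as a weak-$*$ cluster point of suitable averages of $\bar\alpha$-translates of an arbitrary state, with convergence forced by a Hamana-style fixed-point argument using the extremal disconnectedness of $\Glimm(B)$ (Proposition \ref{prop:glimm-injective-hull-extremally-disconnected}) and the essentiality properties of the extensions produced by the injective hull. The delicate point is to carry out this construction without circularly appealing to amenability of $H$ itself; this should ultimately rest on the cocycle structure of the projective representation $V : H \to U(B/z)$ together with the fact that $B/z$, being the quotient of the injective hull by a proper primal ideal of its center, inherits enough rigidity to force the inner $H$-action to admit an invariant state.
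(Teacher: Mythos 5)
Your first two steps are sound and follow the same general strategy as the paper: for $s \in G_z$ one has $1 - q_s \in z$, so the action descends to an inner action on $I_G(A)/z$ implemented by the unitaries $V_s = \pi_z(v_s)$, and an equivariant u.c.p.\ map from $\ell^\infty$ of the group into $I_G(A)$, pushed down to $I_G(A)/z$, intertwines left translation with $\Ad(V_s)$. The problem is the step you yourself flag as delicate: the production of an $\Ad(V_s)$-invariant state $\eta$ on $I_G(A)/z$. This is a genuine gap, and the route you sketch for closing it cannot work as stated. Taking weak-$*$ cluster points of averages of translates of a state is precisely Day's fixed-point argument, which requires amenability of the acting group $H = G_z$ --- the very thing to be proved. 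Nor is the existence of such a state automatic for inner actions: the action of $\mathbb{F}_2$ on $\B(\ell^2\mathbb{F}_2)$ by $\Ad(\lambda_s)$ is inner, yet an invariant state would restrict to a translation-invariant mean on the diagonal copy of $\ell^\infty(\mathbb{F}_2)$, which does not exist. Extremal disconnectedness of $\Glimm(I_G(A))$ gives no traction here either, since the obstruction lives in the noncommutative part of $I_G(A)/z$. Given your step 2, the existence of $\eta$ is in fact \emph{equivalent} to the amenability of $G_z$, so the argument is circular at the decisive moment.

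The missing idea, which is how the paper closes the loop, is that you do not need an invariant state at all if you arrange for the image of $\ell^\infty(G)$ to land in the \emph{center} of $I_G(A)$: central elements are automatically fixed by inner automorphisms. Concretely, use $G$-injectivity to produce an equivariant u.c.p.\ map $\phi : \ell^\infty(G, I_G(A)) \to I_G(A)$ restricting to the identity on the copy of $I_G(A)$ as constant functions. Then $I_G(A)$ lies in the multiplicative domain of $\phi$, so for $f \in \ell^\infty(G)$ and $b \in I_G(A)$ one gets $b\phi(f) = \phi(bf) = \phi(fb) = \phi(f)b$, i.e.\ $\phi(\ell^\infty(G)) \subseteq Z(I_G(A))$. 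Consequently, for $s \in G_z$,
\[
I_G(\alpha)_s(\phi(f)) + z = v_s \phi(f) v_s^* + z = \phi(f) q_s + z = \phi(f) + z ,
\]
so $\pi_z \circ \phi|_{\ell^\infty(G)}$ is already $G_z$-invariant, and composing with \emph{any} state on $I_G(A)/z$ (e.g.\ a vector state of a representation with kernel $z$) yields a $G_z$-invariant mean on $\ell^\infty(G)$, hence amenability of $G_z$. Note also that your map $\Phi : \ell^\infty(H) \to I_G(A)$ obtained from bare $H$-injectivity carries no multiplicative-domain information, so you could not deduce centrality for it; the domain $\ell^\infty(G, I_G(A))$ with the constant copy of $I_G(A)$ is what makes the trick work.
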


\begin{proof}
For $s \in G$, let $q_s \in I_G(A)$ denote the largest $I_G(\alpha)_s$-invariant projection such that $I_G(\alpha)_s|_{I_G(A)q_s}$ is inner. Let $v_s \in I_G(A)q_s$ be a unitary such that $I_G(\alpha)_s|_{I_G(A)q_s} = \Ad(v_s)$.

For $s \in G_z$, $1 - q_s \in z$. Hence for $b \in I_G(A)$,
\[
I_G(\alpha)_s(b) + z = I_G(\alpha)_s(bq_s) + z = v_s b v_s^* + z.
\]

Let $\sigma_z : I_G(A) \to \B(H_z)$ be a representation of $I_G(A)$ with kernel $z$. Fix a unit vector $\xi \in H_z$ and define a state $\theta$ on $I_G(A)$ by
\[
\theta(b) = \langle \sigma_z(b)\xi, \xi \rangle
\]
for $b \in I_G(A)$.

Consider the C*-dynamical system $(\ell^\infty(G,I_G(A)),G)$. By the injectivity of $(I_G(A),G)$ there is an equivariant unital completely positive map $\phi : \ell^\infty(G,I_G(A)) \to I_G(A)$ such that $\phi|_{I_G(A)} = \id|_{I_G(A)}$. Then $I_G(A)$ belongs to the multiplicative domain of $\phi$, so for $f \in \ell^\infty(G)$ and $b \in I_G(A)$,
\[
b\phi(f) = \phi(bf) = \phi(fb) = \phi(f)b.
\]
Hence $\phi$ maps $\ell^\infty(G)$ into the center $Z(I_G(A))$ of $I_G(A)$.

The composition $\theta \circ \phi$ is a $G_z$-invariant state on $\ell^\infty(G)$, implying that $G_z$ is amenable.
\end{proof}

\begin{lem} \label{lem:monotone-projections}
Let $(A,G)$ be a C*-dynamical system. Let $p \in Z(I(A))$ and $q \in Z(I_G(A))$ be projections. Suppose
\[
A \cap I_G(A)q \supseteq A \cap I(A)p.
\]
Then $q \geq p$.
\end{lem}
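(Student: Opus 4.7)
The key observation is that $p \in Z(I(A)) \subseteq Z(I_G(A))$ by Hamana's inclusions of centers (Section~\ref{sec:glimm-ideals}), so $p$ and $q$ commute in $I_G(A)$ and the conclusion $q \geq p$ is equivalent to the single equation $p(1-q) = 0$ in $Z(I_G(A))$. The plan combines two forms of essentiality: C*-algebra essentiality of $A \subseteq I(A)$ (which follows from the operator-system essentiality of the injective hull), and $G$-essentiality of $A \subseteq I_G(A)$, the latter accessed through the completely isometric $G$-equivariant embedding $\psi : I_G(A) \to \ell^\infty(G, I(A))$, $\psi(b)_t = E(\alpha_{t^{-1}}(b))$, that appears in the proof of Theorem~\ref{thm:prop-outer-implies-ess-prop-outer}; here $E : I_G(A) \to I(A)$ is a conditional expectation furnished by the injectivity of $I(A)$. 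Since $\psi$ is completely isometric and hence injective, it will suffice to prove $E(\alpha_{t^{-1}}(p(1-q))) = 0$ for every $t \in G$.

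Fix $t \in G$ and apply $\alpha_{t^{-1}}$ to the hypothesis to get $A \cap I(A)\alpha_{t^{-1}}(p) \subseteq A \cap I_G(A)\alpha_{t^{-1}}(q)$. For $a$ in the left-hand side, centrality of $\alpha_{t^{-1}}(q)$ gives $a = a\alpha_{t^{-1}}(q)$, and since $a \in I(A)$ lies in the multiplicative domain of $E$, applying $E$ yields $a(1 - E(\alpha_{t^{-1}}(q))) = 0$. Using monotone completeness of $Z(I(A))$ (Proposition~\ref{prop:glimm-injective-hull-extremally-disconnected}), I define $f_t := 1 - s(1 - E(\alpha_{t^{-1}}(q))) \in Z(I(A))$, the largest central projection with $f_t(1 - E(\alpha_{t^{-1}}(q))) = 0$. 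Standard manipulations in the commutative AW*-algebra $Z(I(A))$ then give $af_t = a$, so
\[
A \cap I(A)\alpha_{t^{-1}}(p) \subseteq A \cap I(A) f_t.
\]

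To upgrade this inclusion to the projection inequality $\alpha_{t^{-1}}(p) \leq f_t$, I will invoke essentiality of $A \subseteq I(A)$ at the C*-algebra level. If instead $r := \alpha_{t^{-1}}(p)(1-f_t) \in Z(I(A))$ were non-zero, the non-zero ideal $I(A)r$ would meet $A$ in some non-zero $a$ with $a = ar$. Since $r \leq \alpha_{t^{-1}}(p)$, this puts $a$ in $A \cap I(A)\alpha_{t^{-1}}(p)$, so $af_t = a$ by the displayed inclusion; on the other hand $a = ar$ together with $rf_t = 0$ forces $af_t = 0$, a contradiction. Hence $\alpha_{t^{-1}}(p) \leq f_t$ for every $t$, from which
\[
E(\alpha_{t^{-1}}(p(1-q))) = \alpha_{t^{-1}}(p)\bigl(1 - E(\alpha_{t^{-1}}(q))\bigr) = 0,
\]
and complete isometry of $\psi$ then forces $p(1-q) = 0$, as required.

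The main obstacle is the mismatch between the hypothesis (ideals in $A$) and the conclusion (projections in $I_G(A)$): the natural candidate ideal $I_G(A)p(1-q)$ is not $G$-invariant, so the $G$-essentiality of $A \subseteq I_G(A)$ cannot be applied to it directly. The remedy is to handle one $G$-translate at a time using ordinary C*-essentiality of $A \subseteq I(A)$ inside $Z(I(A))$, and then to reassemble the pointwise-in-$t$ conclusions via the single faithful $G$-equivariant target $\psi$, despite the fact that $E$ itself need not be $G$-equivariant.
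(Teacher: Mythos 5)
Your proof is correct, but it takes a genuinely different route from the paper's. The paper's argument realizes $p$ as an open projection relative to $A$: since $Z(\bar{A}) = Z(I(A))$ for Hamana's regular monotone completion $\bar{A}$ of $A$, one writes $p = \sup_{\bar{A}} a_i$ for an increasing net $(a_i)$ in $A$, uses normality of the inclusions $A \subseteq \bar{A} \subseteq I(A) \subseteq I_G(A)$ to conclude $p = \sup_{I_G(A)} a_i$, and then observes that each $a_i \in A \cap I(A)p \subseteq A \cap I_G(A)q$ satisfies $a_i \leq pq$, whence $p \leq pq \leq q$. You avoid the regular monotone completion entirely and instead transport $q$ \emph{downward}: you reduce to $p(1-q)=0$ via the faithful equivariant embedding $\psi : I_G(A) \to \ell^\infty(G,I(A))$ from the proof of Theorem~\ref{thm:prop-outer-implies-ess-prop-outer}, compress $q$ into $Z(I(A))$ one translate at a time through the conditional expectation $E$, and compare central projections of $I(A)$ through the ideals they cut out of $A$, using that $A \subseteq I(A)$ is essential so that every nonzero ideal of $I(A)$ meets $A$. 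Both arguments hinge on the obstruction you name at the end --- $A \subseteq I_G(A)$ is only $G$-essential and $I_G(A)p(1-q)$ is not invariant --- but resolve it in opposite directions; your route is longer but uses only ingredients already deployed elsewhere in the paper. Two points to tighten: the step from $a(1 - E(\alpha_{t^{-1}}(q))) = 0$ to $a f_t = a$ takes place in the AW*-algebra $I(A)$ rather than in $Z(I(A))$, since $a \notin Z(I(A))$; the clean justification is that the left annihilator of the central positive element $1 - E(\alpha_{t^{-1}}(q))$ in the monotone complete (hence AW*) algebra $I(A)$ is exactly $I(A)f_t$ for a central projection $f_t$, which gives the equivalence of $ax=0$ and $af_t=a$ without any appeal to normality of suprema. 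You should also record explicitly that $E(\alpha_{t^{-1}}(q)) \in Z(I(A))$, which holds because $I(A)$ lies in the multiplicative domain of $E$ and $\alpha_{t^{-1}}(q)$ is central in $I_G(A)$.
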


\begin{proof}
Let $\bar{A}$ denote Hamana's regular monotone completion of $A$ \cite{H1981}. Then $A \subseteq \bar{A} \subseteq I(A)$ and by \cite{H1982b}*{Remark (b)}, $Z(\bar{A}) = Z(I(A))$. In particular, $p \in Z(\bar{A})$, so $p$ is open in $\bar{A}$, meaning there is an increasing net $(a_i)$ of self-adjoint elements in $A$ such that $p = \sup_{\bar{A}} a_i$. By \cite{H1981}*{Theorem 3.1} and \cite{H1985}*{Lemma 3.1}, the inclusions $A \subseteq \bar{A} \subseteq I(A) \subseteq I_G(A)$ are normal, so $p = \sup_{I_G(A)} a_i$.

For each $i$, $a_i \leq p$, so $a_i \in A \cap I(A)p \subseteq A \cap I_G(A)q$. Hence $a_i (1-q) = 0$, implying $a_i \leq p q$. Since $p = \sup_{I_G(A)} a_i$, it follows that $p \leq pq$, which implies $p \leq q$.
\end{proof}

\begin{lem} \label{lem:glimm-map-empty-interior}
Let $(A,G)$ be a C*-dynamical system. For $s \in G$, let $p_s \in I(A)$ denote the largest $I(\alpha)_s$-invariant projection such that $I(\alpha)_s|_{I(A)p_s}$ is inner. Then the subset
\[
\{z \in \Glimm(I_G(A)) : A \cap z \supseteq A \cap I(A)p_s \vee A \cap I(A)(1-p_s) \}
\]
is closed and has empty interior in $\Glimm(I_G(A))$.
\end{lem}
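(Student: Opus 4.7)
My plan is to handle the two assertions separately, with the empty-interior claim being the substantive one.

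For closedness, I would first observe that since $I_1 := A \cap I(A) p_s$ and $I_2 := A \cap I(A)(1-p_s)$ are both contained in $A$, the condition $A \cap z \supseteq I_1 \vee I_2$ simplifies to $I_1 + I_2 \subseteq z$. The set in question is then $\bigcap_{a \in I_1 + I_2} \{z : a \in z\}$, and each $\{z : a \in z\} = \{z : \|a + z\| = 0\}$ is closed in the strong topology on $\Glimm(I_G(A))$; since the strong and quotient topologies coincide there by Proposition \ref{prop:injective-is-quasi-standard}, the set is closed.

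For the empty interior claim, I would argue by contradiction using the extremal disconnectedness of $\Glimm(I_G(A))$ supplied by Proposition \ref{prop:glimm-injective-hull-extremally-disconnected}. A nonempty interior would contain a nonempty clopen set of the form $\supp(r)$ for some nonzero projection $r \in Z(I_G(A))$. The centrality of $r$ together with the general fact that the intersection of all Glimm ideals of a unital C*-algebra is zero (applied to the corner $I_G(A)r$, whose Glimm space is $\supp(r)$) yields the computation $\bigcap_{z \in \supp(r)} z = I_G(A)(1-r)$. Hence $I_1 + I_2 \subseteq I_G(A)(1-r)$, so both $I_1 \subseteq A \cap I_G(A)(1-r)$ and $I_2 \subseteq A \cap I_G(A)(1-r)$.

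The decisive step, and what I expect to be the main obstacle to notice, is a double application of Lemma \ref{lem:monotone-projections}. Taking $p = p_s$ and $q = 1-r$ yields $1-r \geq p_s$, equivalently $rp_s = 0$; taking $p = 1-p_s$ and $q = 1-r$ yields $1-r \geq 1-p_s$, equivalently $r \leq p_s$. Combining these forces $r = rp_s = 0$, contradicting $r \neq 0$. The insight is that the symmetric roles played by the complementary projections $p_s$ and $1-p_s$ in defining $I_1$ and $I_2$ let us squeeze $r$ between them, forcing it to vanish.
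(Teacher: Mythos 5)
Your proof is correct and follows essentially the same route as the paper: extremal disconnectedness of $\Glimm(I_G(A))$ produces a central projection supported inside the interior, and a double application of Lemma \ref{lem:monotone-projections} to the complementary central projections $p_s$ and $1-p_s$ forces that projection to vanish. The only cosmetic differences are that the paper establishes closedness via continuity of the intersection map (Lemma \ref{lem:intersection-map-continuous}) rather than directly from the definition of the strong topology, and works with the projection supported on the closure of the entire interior rather than on a clopen subset of it.
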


\begin{proof}
Let
\[
F = \{z \in \Glimm(I_G(A)) : A \cap z \supseteq A \cap I(A)p_s \vee A \cap I(A)(1-p_s) \}.
\]
Since the intersection map from $\Glimm(I_G(A))$ to $\Glimm_p(A)$ is strongly continuous, and since the set
\[
\{I \in \Id(A) : I \supseteq A \cap I(A)p_s \vee A \cap I(A)(1-p_s) \}
\]
is strongly closed, it follows that $F$ is closed.

Let $V$ denote the interior of $F$. By Proposition \ref{prop:glimm-injective-hull-extremally-disconnected}, $\Glimm(I_G(A))$ is extremally disconnected, so the closure $\overline{V}$ is clopen. Let $q \in Z(I_G(A)))$ denote the projection with support $\overline{V}$ and let $J$ denote the ideal in $I_G(A)$ defined by $J = \cap_{z \in \overline{V}} z$. Then $J = I_G(A)(1-q)$, so
\[
A \cap I_G(A)(1-q) = A \cap J = \bigcap_{z \in \overline{V}} A \cap z \supseteq A \cap I(A)p_s \vee A \cap I(A)(1-p_s).
\]
By Lemma \ref{lem:monotone-projections}, it follows that $p_s q = 0$ and $(1-p_s) q = 0$. Hence $q = 0$, so $V = \emptyset$ and we conclude that $F$ has empty interior.
\end{proof}

For a group $G$, we let $\Sub(G)$ denote the space of subgroups of $G$ equipped with the Chabauty topology (see Section \ref{sec:intrinsic-general}).

\begin{prop} \label{prop:continuous-dense-g-delta}
Let $(A,G)$ be a C*-dynamical system with injective hull $(I_G(A),G)$. There is a dense $G_\delta$ subset $L \subseteq \Glimm(I_G(A))$ with the property that $G_z \leq G_{A \cap z}$ for $z \in L$ and the stabilizer map
\[
\Glimm(I_G(A)) \to \Sub(G) : z \to G_z
\]
is continuous on $L$. If $(I_G(A),G)$ is not properly outer, then the set $\{z \in L : s \in G_z\}$ is non-empty. 
\end{prop}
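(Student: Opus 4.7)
The plan is to carve out $L$ as the complement of a meager set, then handle the three assertions separately, with the inclusion $G_z \leq G_{A \cap z}$ being the main point where the definition of $L$ is used. For each $s \in G$, let $F_s \subseteq \Glimm(I_G(A))$ denote the closed subset of empty interior supplied by Lemma \ref{lem:glimm-map-empty-interior}. Since $\Glimm(I_G(A))$ is extremally disconnected (Proposition \ref{prop:glimm-injective-hull-extremally-disconnected}), hence Baire, and since $G$ is countable,
\[
L := \Glimm(I_G(A)) \setminus \bigcup_{s \in G} F_s
\]
is a dense $G_\delta$ subset.

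For the continuity statement, I would use Remark \ref{ref:quasi-stabilizer-injective} applied to the injective system $(I_G(A),G)$: letting $q_s \in I_G(A)$ be the largest $I_G(\alpha)_s$-invariant projection with $I_G(\alpha)_s|_{I_G(A)q_s}$ inner, which is central by Proposition \ref{prop:largest-projection-inner-outer}, we have $s \in G_z$ iff $z \in U_s := \supp(q_s)$. Since each $U_s$ is a clopen subset of $\Glimm(I_G(A))$, the preimages under $z \mapsto G_z$ of the Chabauty-subbasic sets $\{H : s \in H\}$ and $\{H : s \notin H\}$ are clopen, so the stabilizer map is continuous on all of $\Glimm(I_G(A))$, not merely on $L$.

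The main step is the inclusion $G_z \leq G_{A\cap z}$ for $z \in L$, and this is where the main obstacle lies: one must simultaneously exploit the comparison between the projections $p_s \in I(A)$ and $q_s \in I_G(A)$ and the ``nowhere $A$-contained'' property built into the definition of $L$. Fix $z \in L$ and $s \in G_z$, so $q_s \notin z$ and hence $1-q_s \in z$. By Theorem \ref{thm:prop-outer-implies-ess-prop-outer}, $p_s \geq q_s$ in $I_G(A)$; since this implies $p_s q_s = q_s$, one computes $(1-p_s)(1-q_s) = 1-p_s$, so $1-p_s \in z$ and consequently $A \cap I(A)(1-p_s) \subseteq A \cap z$. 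Because $z \notin F_s$, the other inclusion $A \cap I(A)p_s \subseteq A \cap z$ must fail. Now for any proper primal ideal $J$ of $I(A)$ with $A \cap J = A \cap z$, Lemma \ref{lem:only-one-inclusion-must-hold} says exactly one of $p_s, 1-p_s$ lies in $J$; if it were $p_s$, then $A \cap I(A)p_s \subseteq A \cap J = A \cap z$, a contradiction. Hence $1-p_s \in J$, which is precisely the condition for $s \in G_{A\cap z}$.

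Finally, for non-emptiness: if $(I_G(A),G)$ is not properly outer, pick $s \neq e$ with $I_G(\alpha)_s$ not properly outer. Then $q_s \neq 0$, so $U_s = \supp(q_s)$ is a nonempty clopen subset of $\Glimm(I_G(A))$, and by density of $L$ the intersection $L \cap U_s$ is nonempty; any such $z$ has $s \in G_z$.
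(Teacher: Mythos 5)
Your argument is correct and follows essentially the same route as the paper: the same set $L = \Glimm(I_G(A)) \setminus \bigcup_s F_s$ built from Lemma \ref{lem:glimm-map-empty-interior} and Baire category, the same use of $p_s \geq q_s$ from Theorem \ref{thm:prop-outer-implies-ess-prop-outer} to get $1-p_s \in z$, and the same appeal to Lemma \ref{lem:only-one-inclusion-must-hold} to convert $A \cap I(A)p_s \not\subseteq A\cap z$ into $1-p_s \in J$ for every relevant primal ideal $J$. Your observation that the stabilizer map is in fact continuous on all of $\Glimm(I_G(A))$, since each $\supp(q_s)$ is clopen, is a correct minor sharpening (the paper only records continuity on each $U_s$), but the restriction to $L$ is still needed for the inclusion $G_z \leq G_{A\cap z}$, exactly as you use it.
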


\begin{proof}
The Chabauty topology on $\Sub(G)$ agrees with the relative product topology on $\Sub(G)$ when $\Sub(G)$ is identified with a subspace of $\{0,1\}^G$.

Define $r : \Glimm(I_G(A)) \to \Sub(G)$ by $r(z) = G_z$. Then $r$ is continuous at $z \in \Glimm(I_G(A))$ if and only if for every $s \in G$ the map $r_s : \Glimm(I_G(A)) \to \{0,1\}$ defined by
\[
r_s(z) =
\begin{cases}
1 & s \in G_z \\
0 & \text{otherwise}
\end{cases}
\]
is continuous at $z$.

For $s \in G$, let
\[
U_s = \{z \in \Glimm(I_G(A)) : A \cap z \not \supseteq A \cap I(A)p_s \vee A \cap I(A)(1-p_s) \}.
\]
By Lemma \ref{lem:glimm-map-empty-interior}, $U_s$ is open and dense. Fix $z \in U_s$ and let $q_s \in I_G(A)$ denote the largest $I_G(\alpha)_s$-invariant projection such that $I_G(\alpha)_s|_{I_G(A)q_s}$ is inner. Then $s \in G_z$ if and only if $z \in \supp(q_s)$, which is equivalent to $1-q_s \in z$.

By Lemma \ref{lem:only-one-inclusion-must-hold}, exactly one of $p_s \in z$ or $1-p_s \in z$ holds, so by the definition of $U_s$, exactly one of $A \cap z \supseteq A \cap I(A)p_s$ or $A \cap z \supseteq A \cap I(A)(1-p_s)$ holds.

If $s \in G_{A \cap z}$, then $1-p_s \in I(A) \cap z$. In this case, $A \cap z \supseteq A \cap I(A)(1-p_s)$, so $A \cap z \not \supseteq A \cap I(A) p_s$. Conversely, if $A \cap z \not \supseteq A \cap I(A) p_s$, then for any proper primal ideal $J$ in $I(A)$ with $A \cap J = A \cap z$, $p_s \not \in J$. By Lemma \ref{lem:only-one-inclusion-must-hold}, $1-p_s \in J$, and hence $s \in G_{A \cap z}$. Therefore, $s \in G_{A \cap z}$ if and only if $A \cap z \not \supseteq A \cap I(A)p_s$, or equivalently if and only if $A \cap z \supseteq A \cap I(A)(1-p_s)$. 

If $s \in G_z$, then from above $1-q_s \in z$, so by Theorem \ref{thm:prop-outer-implies-ess-prop-outer}, $1-p_s \in z$, implying $A \cap z \supseteq A \cap I(A)(1-p_s)$. It follows that in this case, $s \in G_{A \cap z}$, so $G_z \subseteq G_{A \cap z}$.

Now $r_s(z) = 1$ for all $z$ in the set $U_s \cap \supp(q_s))$, and $r_s(z) = 0$ for all $z$ in the set $U_s \cap \supp(1-q_s)$. These sets are open, since $\supp(q_s)$ and $\supp(1-q_s)$ are clopen. Moreover, they partition $U_s$. Therefore, $r_s$ is continuous on the dense open subset $U_s$. Since $G$ is countable, it follows from the Baire category theorem that the function $r$ is continuous on a dense $G_\delta$ subset $L \subseteq \Glimm(I_G(A))$.

For $s \in G$ and $z \in \Glimm(I_G(A))$, $s \in G_z$ if and only if $z \in \supp(q_s)$. If $I_G(\alpha)_s$ is not properly outer, then $\supp(q_s) \ne \emptyset$. Since $\supp(q_s)$ is open and $L$ is dense, $\supp(q_s) \cap L \ne \emptyset$.
\end{proof}

\section{Ideal separation and the intersection property}
For a C*-dynamical system $(A,G)$, recall that $\Id(A)$ denotes the set of (closed two-sided) ideals in $A$ and $\Id(A)^G$ denotes the set of invariant ideals in $A$. The following definition was introduced by Sierakowski \cite{S2010}.

\begin{defn}
A C*-dynamical system $(A,G)$ is said to have the {\em ideal separation property} if the intersection map
\[
\Id(A \times_r G) \to \Id^G(A) : J \to A \cap J,
\]
between ideals in $A \times_r G$ and invariant ideals in $A$, is bijective.
\end{defn}

\begin{rem}
The intersection map from $\Id(A \times_r G)$ to  $\Id(A)^G$ is always surjective since if $I \subseteq A$ is an invariant ideal, then $I \times_r G$ is an ideal of $A \times_r G$ with the property that $A \cap (I \times_r G) = I$.
\end{rem}

\begin{defn}
A C*-dynamical system $(A,G)$ is said to have the {\em intersection property} if $A \cap J \ne 0$ for every non-zero ideal $J$ of $A \times_r G$. It is said to have the {\em residual intersection property} if the quotient C*-dynamical system $(A/I, G)$ has the intersection property for every invariant ideal $I$ of $A$.
\end{defn}

For an invariant ideal $I$ in $A$, let $I \times_r G$ denote the ideal in $A \times_r G$ generated by $I$. The quotient map $\pi : A \to A/I$ extends to a map $\pi \times_r \operatorname{id} : A \times_r G \to A/I \times_r G$. Let $I \overline{\times}_r G = \ker \pi \times_r \operatorname{id}$. It is clear that $I \times_r G \subseteq I \overline{\times}_r G$. The C*-dynamical system $(A,G)$ is said to be {\em exact} if $I \times_r G = I \overline{\times}_r G$ for all invariant ideals $I$ in $A$. 

The following theorem is due to Sierakowski \cite{S2010}.

\begin{thm}A C*-dynamical system has the ideal separation property if and only if it is exact and has the residual intersection property.
\end{thm}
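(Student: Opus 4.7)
The plan is to unpack both implications directly using only the surjectivity already noted in the remark (so that the ideal separation property reduces to injectivity of the intersection map) together with the exact sequence
\[
0 \longrightarrow I \times_r G \longrightarrow A \times_r G \xrightarrow{\pi \times_r \id} (A/I) \times_r G
\]
whose kernel is by definition $I \overline{\times}_r G$. Throughout, for $J \in \Id(A \times_r G)$ with $A \cap J = I$, note that $I$ is automatically invariant and $I \times_r G \subseteq J$, so it is natural to push $J$ down to $(A/I) \times_r G$.

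For the forward direction, assume ideal separation. Given an invariant ideal $I \subseteq A$, the ideals $I \times_r G$ and $I \overline{\times}_r G$ both intersect $A$ in $I$ (for the second, the kernel of $\pi \times_r \id$ meets $A$ in the kernel of $\pi$), hence coincide by injectivity; this is exactness. For the residual intersection property, fix an invariant ideal $I$ and a nonzero ideal $K \trianglelefteq (A/I) \times_r G$. Let $J = (\pi \times_r \id)^{-1}(K)$. Then $J \supseteq I \overline{\times}_r G \supseteq I \times_r G$ and the induced identification $(A \cap J)/I \cong (A/I) \cap K$ shows that if $(A/I) \cap K = 0$ then $A \cap J = I = A \cap (I \times_r G)$, forcing $J = I \times_r G$ by injectivity; but then $K = (\pi \times_r \id)(J) = 0$, a contradiction.

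For the reverse direction, assume exactness and the residual intersection property. Let $J \trianglelefteq A \times_r G$ be any ideal and set $I = A \cap J$. Since $I \times_r G \subseteq J$, the ideal $J$ descends to $K = (\pi \times_r \id)(J) \trianglelefteq (A/I) \times_r G$, and the same identification as above gives $(A/I) \cap K = (A \cap J)/I = 0$. The intersection property on $(A/I, G)$ then forces $K = 0$, i.e.\ $J \subseteq I \overline{\times}_r G$; by exactness $I \overline{\times}_r G = I \times_r G$, so $J = I \times_r G$. Hence the intersection map has the explicit inverse $I \mapsto I \times_r G$ on invariant ideals and is in particular injective.

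The only genuinely nontrivial step is the canonical identification $(A \cap J)/I \cong (A/I) \cap K$ when $J$ is the preimage of $K$; this I expect to be the main obstacle, as it requires checking that elements of $(A/I) \cap K \subseteq (A/I) \times_r G$ really do lift to elements of $A \times_r G$ that lie in $A + I \times_r G$, which in turn uses that the canonical copy of $A/I$ in $(A/I) \times_r G$ is literally the image of $A \subseteq A \times_r G$ under $\pi \times_r \id$. Everything else is formal manipulation with the defining exact sequence and the surjectivity remark.
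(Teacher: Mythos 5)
Your argument is correct, and it is essentially the standard proof of Sierakowski's theorem; note that the paper itself does not prove this statement but simply cites \cite{S2010}, so there is no in-paper argument to compare against. Both directions of your diagram chase with the sequence $0 \to I \times_r G \to A \times_r G \to (A/I)\times_r G$ check out: the faithfulness of $A/I$ inside $(A/I)\times_r G$ gives $A \cap (I\overline{\times}_r G) = I$, and your identification $(A\cap J)/I \cong (A/I)\cap K$ is valid whenever $J$ is the \emph{full} preimage of $K$. The one place to be slightly more explicit is in the reverse direction: there $K$ is defined as the image of $J$, and $J$ is the full preimage of $K$ only because $\ker(\pi\times_r\operatorname{id}) = I\overline{\times}_r G = I\times_r G \subseteq J$, so exactness is already needed to justify the identification $(A/I)\cap K = (A\cap J)/I$, not merely in the final step where you conclude $J \subseteq I\times_r G$. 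Since you flag precisely this identification as the nontrivial point and state it under the hypothesis that $J$ is the preimage of $K$, this is an expository gap rather than a mathematical one.
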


The statement of the following result combines a result of Kawabe \cite{K2017}*{Theorem 3.4} and a result of Bryder \cite{B2017}*{Theorem 3.3}. 

\begin{thm} \label{thm:intersection-property-iff-extensions-have-it}
Let $(A,G)$ be a C*-dynamical system. The following are equivalent:
\begin{enumerate}
\item $(A,G)$ has the intersection property.
\item Every C*-dynamical system $(B,G)$ satisfying
\[
(A,G) \subseteq (B,G) \subseteq (I_G(A),G)
\]
has the intersection property.
\item $(I_G(A),G)$ has the intersection property.
\end{enumerate}
\end{thm}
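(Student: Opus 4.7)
The implications $(2) \Rightarrow (1)$ and $(2) \Rightarrow (3)$ are immediate, obtained by taking $B = A$ and $B = I_G(A)$ in condition (2). The content of the theorem therefore lies in the converse directions, and my plan is to prove $(1) \Rightarrow (2)$ and $(3) \Rightarrow (1)$, which together with the trivial implications close the cycle $(1) \Leftrightarrow (2) \Leftrightarrow (3)$.

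The setup is as follows. For any intermediate $G$-equivariant inclusion $A \subseteq B \subseteq I_G(A)$, functoriality of the reduced crossed product yields a chain of equivariant embeddings
\[
A \times_r G \;\subseteq\; B \times_r G \;\subseteq\; I_G(A) \times_r G,
\]
intertwining the faithful canonical conditional expectations $E_A$, $E_B$, and $\tilde E$. A second basic input is a consequence of essentiality of the inclusion $A \subseteq I_G(A)$ in the category of $G$-operator systems: every nonzero $G$-invariant ideal of $I_G(A)$ meets $A$ nontrivially. Indeed, if $I$ is such an ideal with $A \cap I = 0$, the quotient map $I_G(A) \to I_G(A)/I$ is a $G$-equivariant unital completely positive map that is injective on $A$, hence injective on $I_G(A)$ by essentiality. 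The same reasoning applies to $A \subseteq B$.

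For $(3) \Rightarrow (1)$, I would take a nonzero ideal $J$ of $A \times_r G$ and let $\tilde J$ denote the ideal it generates in $I_G(A) \times_r G$. Then $\tilde J$ is nonzero, so $(3)$ gives $\tilde J \cap I_G(A) \ne 0$, and essentiality yields a nonzero element of $A \cap \tilde J$. The remaining step is to show that such an element actually lies in $J$; this requires a direct argument using the expansion of elements of $\tilde J$ as limits of finite sums $\sum x_i y_i z_i$ with $y_i \in J$ and the compatibility $\tilde E|_{A \times_r G} = E_A$ to reduce the multipliers $x_i, z_i$ to the subalgebra $A \times_r G$.

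For $(1) \Rightarrow (2)$, I would take a nonzero ideal $J$ of $B \times_r G$ and consider the contracted ideal $J_A = J \cap (A \times_r G)$ of $A \times_r G$. If $J_A \ne 0$, then $(1)$ gives $A \cap J_A = A \cap J \ne 0$, whence $B \cap J \ne 0$. The interesting case is $J_A = 0$: here one argues by contradiction using the $G$-injectivity of $I_G(A)$. Since the quotient $q : B \times_r G \to (B \times_r G)/J$ is injective on $A$, the inclusion $A \hookrightarrow I_G(A)$ extends to a $G$-equivariant unital completely positive map $\psi : (B \times_r G)/J \to I_G(A)$; the composite $\psi \circ q : B \times_r G \to I_G(A)$ then extends $A \hookrightarrow I_G(A)$. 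Comparing with $B \times_r G \xrightarrow{E_B} B \hookrightarrow I_G(A)$ via rigidity properties of the injective hull forces $\psi \circ q$ to agree with this canonical composition, but $\psi \circ q$ vanishes on $J$, contradicting the faithfulness of $E_B$.

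The hard part will be making precise the rigidity step in the last paragraph, namely the assertion that any $G$-equivariant unital completely positive extension of the canonical inclusion $A \hookrightarrow I_G(A)$ to $B \times_r G$ must agree with $E_B$ composed with the inclusion $B \hookrightarrow I_G(A)$. Such extensions are generally not unique a priori on the whole reduced crossed product, and making the argument go through requires a careful deployment of the minimality and rigidity of the injective hull. I would expect this to be the technical core of the proof, and the place where the pseudo-expectation framework developed later in the paper naturally enters.
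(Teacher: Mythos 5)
First, a contextual note: the paper does not prove this theorem itself but attributes it to Kawabe \cite{K2017}*{Theorem 3.4} and Bryder \cite{B2017}*{Theorem 3.3}, so your proposal has to stand on its own. Your logical reduction to $(1)\Rightarrow(2)$ and $(3)\Rightarrow(1)$ is sound, and the tools you invoke ($G$-injectivity, rigidity of $I_G(A)$, faithfulness of the canonical expectations) are the right ones. But both substantive implications have genuine gaps, located exactly at the steps you defer, and in each case the proposed repair is not viable.

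For $(3)\Rightarrow(1)$: from $A\cap\tilde J\ne 0$ you cannot conclude $A\cap J\ne 0$. What you actually learn is that the possibly larger ideal $J_0=\tilde J\cap(A\times_r G)\supseteq J$ of $A\times_r G$ meets $A$; nothing places a nonzero element of $A\cap\tilde J$ inside $J$. The suggested fix --- reducing the multipliers $x_i,z_i$ to $A\times_r G$ via $\tilde E|_{A\times_r G}=E_A$ --- does not work, because $\tilde E$ takes values in $I_G(A)$, not in $A\times_r G$, and there is in general no conditional expectation of $I_G(A)\times_r G$ onto $A\times_r G$. The workable argument runs in the opposite direction: given $J\trianglelefteq A\times_r G$ with $A\cap J=0$, use $G$-injectivity to build a pseudo-expectation $\phi:A\times_r G\to I_G(A)$ vanishing on $J$, extend it equivariantly to $I_G(A)\times_r G$, note that rigidity forces the extension to be a conditional expectation onto $I_G(A)$, and observe that its kernel ideal (Lemma \ref{lem:equivariant-pseudo-expectation-ideal}) contains $J$ and meets $I_G(A)$ trivially; condition (3) then forces $J=0$. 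This is precisely the mechanism of Theorem \ref{thm:intersection-property-iff-every-equivariant-pseudo-expectation-faithful}.

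For $(1)\Rightarrow(2)$: the rigidity assertion that $\psi\circ q$ must equal $\iota_B\circ E_B$ is false in general. Rigidity only yields $\psi\circ q|_B=\iota_B$, i.e.\ that $\psi\circ q$ is a pseudo-expectation for $(B,G)$; it does not force it to factor through $E_B$. The paper's example $(M_2,(\mathbb{Z}/2\mathbb{Z})^2)$, and the care taken to separate ``unique pseudo-expectation'' (Theorem \ref{thm:properly-outer-implies-unique-pseudo-expectation}) from ``all pseudo-expectations faithful'' (Theorem \ref{thm:intersection-property-iff-every-equivariant-pseudo-expectation-faithful}), exist precisely because non-canonical pseudo-expectations occur even when the intersection property holds. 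So in your case $J_A=0$ you obtain a non-faithful pseudo-expectation for $(B,G)$, but no contradiction with (1): contradicting (1) would require a non-faithful pseudo-expectation for $(A,G)$, i.e.\ a nonzero element of $J\cap(A\times_r G)$ --- exactly what you assumed away. The missing ingredient, which is the real content of the cited results, is that every nonzero ideal of $B\times_r G$ (for $A\subseteq B\subseteq I_G(A)$) has nonzero intersection with $A\times_r G$. Without that, $(1)\Rightarrow(3)$, and hence $(1)\Rightarrow(2)$, is not established by your argument.
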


The same result of Kawabe asserts that a commutative C*-dynamical system $(C(X),G)$ has the intersection property if and only if its injective hull $(I_G(C(X)),G)$ is topologically free. Since proper outerness is the noncommutative analogue of topological freeness, one might suspect that a noncommutative C*-dynamical system $(A,G)$ has the intersection property if and only if its injective hull $(I_G(A),G)$ is properly outer. We will see that proper outerness of $(I_G(A),G)$ does imply the intersection property. However, the next example shows that $(A,G)$ may have the interesection property even if $(I_G(A),G)$ is not properly outer.

\begin{example}

Let $G = (\mathbb{Z}/2\mathbb{Z})^2 = \{e, u, v, uv\}$ and define an action $\alpha : G \to \Aut(M_2)$ by
\[
\alpha_e = \id, \quad \alpha_u = \Ad(U), \quad \alpha_v = \Ad(V),\ \alpha_{uv} = \Ad(U)\Ad(V),
\]
where
\[
U =
\left[\begin{matrix}
1 & 0 \\
0 & -1
\end{matrix}\right], \qquad
V = 
\left[\begin{matrix}
0 & 1 \\
1 & 0
\end{matrix}\right].
\]
The C*-dynamical system $(M_2,G,\alpha)$ is injective, however it is not properly outer. Also, $UV = -VU$, so it is not inner. However, $M_2 \times G \simeq M_4$. Since $M_4$ is simple, it follows that $(M_2,G,\alpha)$ has the intersection property. 
\end{example}

\section{Pseudo-expectations}

Let $A \subseteq B$ be an inclusion of C*-algebras. In general there is no reason to expect the existence of a conditional expectation from $B$ onto $A$. However, the injectivity of $I(A)$, where $I(A)$ denotes the injective hull of $A$, implies the existence of a unital completely positive map $\phi : B \to I(A)$ satisfying $\phi|_A = \operatorname{id}|_A$. Pitts calls the map $\phi$ a {\em pseudo-expectation} for $A$ \cite{P2012} (see also \cite{PZ2015}, \cite{Z2017a} and \cite{Z2017b}). In this section we define a notion of pseudo-expectation for a C*-dynamical system.

\begin{defn}
Let $(A,G)$ be a C*-dynamical system. An equivariant unital completely positive map $\phi : A \times_r G \to I_G(A)$ is said to be a {\em pseudo-expectation} for $(A,G,\alpha)$ if $\phi|_A = \operatorname{id}|_A$.
\end{defn}

\begin{rem}
Let $(A,G)$ be a C*-dynamical system. Then every equivariant conditional expectation from $A \times_r G$ to $A$ is a pseudo-expectation for $(A,G)$. In particular, the canonical conditional expectation $E_A : A \times_r G \to A$, being equivariant, is a pseudo-expectation, implying that the set of pseudo-expectations is non-empty. The non-emptiness of the set of pseudo-expectations also follows from the injectivity of $(I_G(A),G)$: the identity map on the trivial C*-dynamical system $(\bC,G)$ is equivariant, so by injectivity it extends to an equivariant unital completely positive map $A \times_r G \to I_G(A)$.
\end{rem}

\begin{rem}
If $(B,G)$ is an injective C*-dynamical system, then the set of pseudo-expectations for $(B,G)$ coincides with the set of equivariant conditional expectations from $B \times_r G$ to $B$.
\end{rem}

\begin{thm} \label{thm:properly-outer-implies-unique-pseudo-expectation}
Let $(A,G)$ be a C*-dynamical system. If $(I_G(A),G)$ is properly outer, then the only pseudo-expectation for $(A,G)$ is the canonical one.
\end{thm}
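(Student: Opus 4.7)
The plan is to lift the pseudo-expectation to the injective hull and then exploit the coincidence of Kishimoto's and Kallmann's notions of proper outerness on an injective C*-algebra. Let $\phi \colon A \times_r G \to I_G(A)$ be a pseudo-expectation. I would start by viewing $A \times_r G$ as a $G$-equivariantly included C*-subalgebra of $I_G(A) \times_r G$, where $G$ acts on both crossed products by conjugation with the canonical unitaries; this action extends $\alpha$ on $A$ and $I_G(\alpha)$ on $I_G(A)$, and $\phi$ is equivariant with respect to it by the very definition of pseudo-expectation.

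Applying the $G$-injectivity of $I_G(A)$, I would extend $\phi$ to a $G$-equivariant ucp map $\tilde\phi \colon I_G(A) \times_r G \to I_G(A)$. The restriction $\tilde\phi|_{I_G(A)}$ is then a $G$-equivariant ucp self-map of $I_G(A)$ fixing $A$ pointwise, so the rigidity of the $G$-injective hull forces $\tilde\phi|_{I_G(A)} = \id_{I_G(A)}$. Consequently $\tilde\phi$ is an equivariant conditional expectation, and in particular is $I_G(A)$-bimodular. Writing $\tilde u_s$ for the canonical unitaries of $I_G(A) \times_r G$ and setting $w_s = \tilde\phi(\tilde u_s)$, bimodularity gives
\[
b w_s = \tilde\phi(b \tilde u_s) = \tilde\phi(\tilde u_s \, I_G(\alpha)_{s^{-1}}(b)) = w_s I_G(\alpha)_{s^{-1}}(b)
\]
for all $b \in I_G(A)$, which is precisely Kallmann's intertwining relation for $I_G(\alpha)_{s^{-1}}$. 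Since $I_G(A)$ is injective, Kishimoto's and Kallmann's notions of proper outerness agree on it, so the hypothesis that $(I_G(A),G)$ is properly outer forces $w_s = 0$ for every $s \neq e$. Thus $\tilde\phi$ is the canonical conditional expectation on $I_G(A) \times_r G$, and restricting back to $A \times_r G$ yields $\phi = E_A$.

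The main obstacle is the rigidity step $\tilde\phi|_{I_G(A)} = \id_{I_G(A)}$. This is a standard but nontrivial property of the $G$-injective hull that combines $G$-injectivity with the $G$-essentiality of $A \subseteq I_G(A)$: essentiality first forces any equivariant ucp self-map of $I_G(A)$ fixing $A$ pointwise to be a complete order injection, and then a Schwarz/idempotent argument applied to a point-weak limit of iterates pins it down to the identity. A secondary point worth verifying is that the inclusion $A \times_r G \hookrightarrow I_G(A) \times_r G$ is a genuine equivariant embedding of C*-algebras; this is clear by realizing both reduced crossed products concretely inside $\B(H \otimes \ell^2(G))$ for any faithful representation of $I_G(A)$ on a Hilbert space $H$.
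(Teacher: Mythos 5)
Your proposal is correct and follows essentially the same route as the paper: extend the pseudo-expectation by $G$-injectivity to a map $I_G(A)\times_r G \to I_G(A)$, use rigidity of the injective hull to see it is a conditional expectation, derive the Kallmann intertwining relation $b\,\psi(\lambda_s)=\psi(\lambda_s)\,I_G(\alpha)_{s^{-1}}(b)$ from the multiplicative domain, and invoke the equivalence of Kishimoto's and Kallmann's proper outerness on the injective algebra $I_G(A)$ to conclude $\psi(\lambda_s)=0$ for $s\neq e$. The only difference is that you spell out the rigidity step and the Kallmann--Kishimoto equivalence explicitly, both of which the paper uses implicitly (the latter via the remark in Section 2.5 citing Hamana).
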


\begin{proof}
Suppose $(I_G(A),G,\alpha)$ is properly outer and let $\phi : A \times_r G \to I_G(A)$ be a pseudo-expectation. By the injectivity of $(I_G(A),G,\alpha)$ we can extend $\phi$ to an equivariant conditional expectation $\psi : I_G(A) \times_r G \to I_G(A)$. In particular, $I_G(A)$ belongs to the multiplicative domain of $\psi$. Thus for $s \in G$ and $b \in I_G(A)$, 
\[
\psi(\lambda_s)b = \psi(\lambda_s b) = \psi(\alpha_s(b) \lambda_s) = \alpha_s(b) \psi(\lambda_s).
\]
The proper outerness of $(I_G(A),G)$ implies that $\psi(\lambda_s) = 0$ for $s \in G \setminus \{e\}$, and it follows that $\psi$ is the canonical conditional expectation from $I_G(A) \times_r G$ to $I_G(A)$. Hence $\phi$ is the canonical conditional expectation from $A \times_r G$ to $A$.
\end{proof}

We will consider a converse to Theorem \ref{thm:properly-outer-implies-unique-pseudo-expectation} in Section \ref{sec:vanishing-obstruction}.

\begin{lem} \label{lem:equivariant-pseudo-expectation-ideal}
Let $(A,G)$ be a C*-dynamical system and let $\phi : A \times_r G \to I_G(A)$ be a pseudo-expectation for $(A,G)$. Let
\[
J = \{ x \in A \times_r G : \phi(x^*x) = 0 \}.
\]
Then $J$ is an ideal of $A \times_r G$. 
\end{lem}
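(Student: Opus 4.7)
The plan is to show that $J$ is simultaneously a closed left ideal (by a GNS/Schwarz argument for the UCP map $\phi$) and closed under right multiplication by the generators of $A \times_r G$, namely $A$ and the unitaries $\lambda_s$, $s \in G$.

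First I would verify that $J$ is a closed left ideal of $A \times_r G$. Closedness is immediate from continuity of $\phi$ and of $x \mapsto x^*x$. For the left ideal property, I would use the standard Schwarz-type fact that for a UCP map $\phi$, if $\phi(x^*x) = 0$ then $\phi(y^*x) = 0$ for every $y$; this is seen most cleanly via the Stinespring/GNS construction for $\phi$, where the class of $x$ in the associated Hilbert module is zero. From this, additivity of $J$ follows at once (expand $\phi((x+y)^*(x+y))$), and for $y \in A \times_r G$, $x \in J$, we have $\phi((yx)^*(yx)) = \phi(x^*y^*yx) \le \|y\|^2 \phi(x^*x) = 0$, so $yx \in J$.

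Next I would observe that $A$ lies in the multiplicative domain of $\phi$. Indeed, since $\phi|_A = \id|_A$, for every $a \in A$ we have $\phi(a^*a) = a^*a = \phi(a)^*\phi(a)$ and likewise $\phi(aa^*) = \phi(a)\phi(a)^*$. Consequently, for $x \in J$ and $a \in A$,
\[
\phi((xa)^*(xa)) = \phi(a^*x^*xa) = a^*\phi(x^*x)a = 0,
\]
so $xa \in J$.

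The remaining and only mildly delicate step is closure under right multiplication by the unitaries $\lambda_s$, and this is exactly where equivariance enters. The $G$-action on $A \times_r G$ extending $\alpha$ is implemented by $\Ad(\lambda_s)$, so equivariance of $\phi$ reads $\phi(\lambda_s x \lambda_s^*) = \alpha_s(\phi(x))$ for all $x \in A \times_r G$ and $s \in G$. For $x \in J$ this gives
\[
\phi\bigl((\lambda_s x \lambda_s^*)^*(\lambda_s x \lambda_s^*)\bigr) = \phi(\lambda_s x^*x \lambda_s^*) = \alpha_s(\phi(x^*x)) = 0,
\]
so $\lambda_s x \lambda_s^* \in J$. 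Multiplying on the left by $\lambda_s^*$ and using that $J$ is a left ideal, we obtain $x\lambda_s^* \in J$; replacing $s$ by $s^{-1}$ gives $x\lambda_s \in J$ for every $s$. Combined with the multiplicative-domain argument, $J$ is closed under right multiplication by every $a\lambda_s$, and by linearity and density it is a two-sided ideal. The main (minor) obstacle is simply to remember to combine equivariance with the left-ideal property to extract right invariance under $\lambda_s$ — beyond that the argument is routine.
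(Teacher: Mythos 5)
Your proof is correct and follows essentially the same route as the paper: $A$ lies in the multiplicative domain of $\phi$ (handling right multiplication by $A$), and equivariance handles right multiplication by the $\lambda_s$; the paper just computes $\phi(\lambda_s^* x^* x \lambda_s) = \alpha_{s^{-1}}(\phi(x^*x)) = 0$ directly rather than passing through $\lambda_s x \lambda_s^*$ and the left-ideal property. The extra details you supply (Schwarz/Stinespring for the left-ideal and additivity claims, density of $C_c(G,A)$) are correct and merely fill in what the paper leaves as ``clear.''
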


\begin{proof}
It is clear that $J$ is a left ideal. To see that it is also a right ideal, first observe that $A$ belongs to the multiplicative domain of $\phi$. Thus for $x \in J$ and $a \in A$,
\[
\phi(a^*x^*xa) = a^*\phi(x^*x)a = 0,
\]
giving $xa \in J$. On the other hand, for $s \in G$, the equivariance of $\phi$ implies
\[
\phi(\lambda_s^* x^* x \lambda_s) = \alpha_{s^{-1}} \circ \phi(x^* x) = 0.
\]
giving $x \lambda_s \in J$. 
\end{proof}

The non-equivariant version of Lemma \ref{lem:equivariant-pseudo-expectation-ideal} is not true in general (see \cite{PZ2015}*{Remark 3.11}). The next result can be seen as an equivariant version of \cite{PZ2015}*{Theorem 3.5}.

\begin{thm} \label{thm:intersection-property-iff-every-equivariant-pseudo-expectation-faithful}
Let $(A,G)$ be a C*-dynamical system. Then $(A,G)$ has the intersection property if and only if every pseudo-expectation for $(A,G)$ is faithful.
\end{thm}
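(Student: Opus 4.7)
The plan is to prove both directions by leveraging Lemma~\ref{lem:equivariant-pseudo-expectation-ideal} in one direction and $G$-injectivity of $I_G(A)$ in the other.

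For the forward direction, I would assume $(A,G)$ has the intersection property and let $\phi : A \times_r G \to I_G(A)$ be a pseudo-expectation. By Lemma~\ref{lem:equivariant-pseudo-expectation-ideal}, the set
\[
J = \{x \in A \times_r G : \phi(x^*x) = 0\}
\]
is an ideal of $A \times_r G$. Since $\phi|_A = \id|_A$, for any nonzero $a \in A$ we have $\phi(a^*a) = a^*a \neq 0$, so $A \cap J = 0$. The intersection property then forces $J = 0$, i.e.\ $\phi$ is faithful.

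For the converse, I would argue contrapositively: suppose $(A,G)$ fails the intersection property and exhibit a non-faithful pseudo-expectation. Pick a nonzero ideal $J \subseteq A \times_r G$ with $A \cap J = 0$. Since $J$ is automatically invariant under conjugation by the canonical unitaries $\lambda_s$, it is $G$-invariant, so the quotient $(A\times_r G)/J$ inherits an action of $G$ and the composition $A \hookrightarrow A \times_r G \to (A \times_r G)/J$ is an equivariant embedding. Applying the $G$-injectivity of $I_G(A)$ to this embedding and to the equivariant ucp map $\id : A \to I_G(A)$, I obtain an equivariant ucp extension $\psi : (A \times_r G)/J \to I_G(A)$ with $\psi|_A = \id|_A$. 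Then $\phi = \psi \circ \pi : A \times_r G \to I_G(A)$, where $\pi$ is the quotient map, is an equivariant ucp map restricting to the identity on $A$, hence a pseudo-expectation. Since $\phi$ vanishes on $J \neq 0$, it is not faithful.

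I do not expect any serious obstacle here: the forward direction is essentially automatic from Lemma~\ref{lem:equivariant-pseudo-expectation-ideal}, and the only step requiring any care in the converse is checking that $J$ is $G$-invariant so that the quotient becomes an object in the noncommutative dynamical category to which $G$-injectivity applies. Once that is in place, injectivity of $(I_G(A), G)$ supplies the desired extension immediately.
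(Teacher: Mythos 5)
Your proposal is correct and follows essentially the same route as the paper: the forward direction is the paper's argument via Lemma~\ref{lem:equivariant-pseudo-expectation-ideal} (stated directly rather than contrapositively), and the converse constructs the non-faithful pseudo-expectation by factoring through the quotient $(A\times_r G)/J$ and invoking the injectivity of $(I_G(A),G)$, exactly as in the paper. Your extra remark that $J$ is $G$-invariant because the action is implemented by $\Ad(\lambda_s)$ is a detail the paper leaves implicit, and it is the right justification.
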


\begin{proof}
($\Rightarrow$)
Let $\phi : A \times_r G \to I_G(A)$ be a non-faithful pseudo-expectation for $(A,G)$. Let $J = \{x \in A \times_r G : \phi(x^*x) = 0\}$. Then by Lemma \ref{lem:equivariant-pseudo-expectation-ideal}, $J$ is an ideal of $A \times_r G$. By assumption, $J \ne 0$, but $A \cap J = 0$, so we conclude that $(A,G)$ does not have the intersection property.

($\Leftarrow$)
Suppose $(A,G)$ does not have the intersection property. Then there is a non-zero ideal $J$ of $A \times_r G$ with $A \cap J = 0$. Let $\pi : A \times_r G \to (A \times_r G) / J$ denote the quotient homomorphism. The restriction $\pi|_A$ is faithful, so by the injectivity of $(I_G(A),G)$, there is an equivariant unital completely positive map $\phi : (A \times_r G) / J \to I_G(A)$ such that $\phi \circ \pi|_A = \id|_A$. The composition $\phi \circ \pi$ is a non-faithful pseudo-expectation for $(A,G)$.
\end{proof}

Combining Theorem \ref{thm:properly-outer-implies-unique-pseudo-expectation} and Theorem \ref{thm:intersection-property-iff-every-equivariant-pseudo-expectation-faithful} yields a simple proof of the following result of Sierakowski \cite{S2010}*{Remark 2.23}. 

\begin{cor} \label{cor:properly-outer-implies-intersection-property}
Let $(A,G)$ be a C*-dynamical system. If $(A,G)$ is properly outer, then it has the intersection property.
\end{cor}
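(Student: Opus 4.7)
The plan is to chain together the three main tools developed just above: the transfer of proper outerness to the $G$-injective hull, the uniqueness of the pseudo-expectation under proper outerness, and the characterization of the intersection property via faithfulness of all pseudo-expectations.

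First I would invoke the corollary to Theorem \ref{thm:prop-outer-implies-ess-prop-outer} which asserts that if $(A,G)$ is properly outer, then so is $(I_G(A),G)$. This is the crucial input that lets us pass from a hypothesis about $A$ to a hypothesis about its $G$-injective hull, which is where the pseudo-expectation machinery lives.

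Next I would apply Theorem \ref{thm:properly-outer-implies-unique-pseudo-expectation}: proper outerness of $(I_G(A),G)$ implies that the only pseudo-expectation $\phi : A \times_r G \to I_G(A)$ is the canonical one, i.e.\ the restriction of the canonical conditional expectation $E_A : A \times_r G \to A$ composed with the inclusion $A \hookrightarrow I_G(A)$. Since $E_A$ is well-known to be faithful (as is the inclusion $A \subseteq I_G(A)$), this unique pseudo-expectation is faithful.

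Finally, by Theorem \ref{thm:intersection-property-iff-every-equivariant-pseudo-expectation-faithful}, since every pseudo-expectation for $(A,G)$ is faithful, $(A,G)$ has the intersection property. There is essentially no obstacle here: the corollary is a formal assembly of previously established results, and the only mildly nontrivial ingredient is the faithfulness of the canonical conditional expectation $E_A$, which is a standard fact about reduced crossed products.
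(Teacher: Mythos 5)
Your proof is correct and is exactly the argument the paper intends: it combines the corollary to Theorem \ref{thm:prop-outer-implies-ess-prop-outer} with Theorem \ref{thm:properly-outer-implies-unique-pseudo-expectation} and Theorem \ref{thm:intersection-property-iff-every-equivariant-pseudo-expectation-faithful}, together with the standard faithfulness of the canonical conditional expectation. Nothing is missing.
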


In recent work, Zarikian \cite{Z2017b} independently observes that pseudo-expectations, defined in the sense of Pitts, can be used to prove Corollary~\ref{cor:properly-outer-implies-intersection-property}.

\section{Noncommutative boundaries}

\subsection{Matrix convexity}

We require some basic notions from the theory of matrix convexity. For a locally convex topological vector space $V$ and $n \geq 1$, let $M_n(V)$ denote the space of $n \times n$ matrices over $V$. A {\em matrix convex set} is a family $\bK = (K_n)_{n \geq 1}$ of convex subsets $K_n \subseteq M_n(V)$ closed under taking {\em matrix convex combinations}, i.e. such that 
\[
\sum_{i=1}^k \gamma_i^* \mu_i \gamma_i \in K_n
\]
for all $\mu_i \in K_{n_i}$ and $\gamma_i \in M_{n_i,n}$ for $1 \leq i \leq k$ satisfying $\sum_{i=1}^k \gamma_i^* \gamma_i = 1$. If $K_n$ is compact in the product topology on $M_n(V)$ for each $n \geq 1$, then $\bK$ is said to be {\em compact}.

Let $\bK$ and $\bL$ be compact matrix convex sets. A {\em matrix affine map} is a family $f = (f_n)_{n \geq 1}$ of mappings $f_n : K_n \to L_n$ satisfying
\[
f_n \left( \sum_{i=1}^k \gamma_i^* \mu_i \gamma_i \right) = \sum_{i=1}^k \gamma_i^* f_{n_i}(\mu_i) \gamma_i,
\]
for all $\mu_i \in K_{n_i}$ and $\gamma_i \in M_{n_i,n}$ for $1 \leq i \leq k$ satisfying $\sum_{i=1}^k \gamma_i^* \gamma_i = 1$. We write $A(\bK)$ for the set of all continuous matrix affine mappings from $\bK$ to $(M_n(\bC))$. A matrix affine $f = (f_n)$ from $\bK$ to $(M_n(\bC))$ is continuous if $f_1$ is continuous (see the remarks following \cite{WW1999}*{Definition 3.4}). 

A {\em homeomorphism} of $\bK$ is a family $g= (g_n)_{n \geq 1}$ of homeomorphisms $g_n : K_n \to K_n$. We will write $\operatorname{Homeo}(\bK)$ for the set of homeomorphisms of $\bK$.

For an operator system $S$, let $MS(S)$ denote the matrix state space of $S$, i.e. $MS(S) = (MS_n(S))_{n \geq 1}$, where $MS_n(S)$ denotes the set of unital completely positive maps from $S$ into $M_n$. Then $MS(S)$ is a compact matrix convex set with respect to the weak* topology on $S^*$. Note that each $a \in S$ gives rise to a matrix affine function from $MS(S)$ to $(M_n(\bC))$. 

The first part of a result of Webster and Winkler \cite{WW1999}*{Proposition 3.5} provides an analogue for general operator systems of Kadison's representation theorem for operator subsystems of commutative C*-algebras.

\begin{thm}
Let $S$ be an operator system with matrix state space $MS(S)$ and let $A(MS(S))$ denote the operator system of matrix affine functions on $MS(S)$. Then $S$ and $A(MS(S))$ are isomorphic.
\end{thm}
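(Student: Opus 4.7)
The plan is to establish the isomorphism by constructing an explicit evaluation map and showing it is a complete order isomorphism. Specifically, I would define $\iota : S \to A(MS(S))$ by $\iota(a) = (\iota_n(a))_{n \geq 1}$, where
\[
\iota_n(a)(\phi) = \phi(a), \qquad \phi \in MS_n(S),\ a \in S.
\]
The task then splits into checking that $\iota$ is well-defined and matrix affine, that it is a complete order embedding, and finally that it is surjective.

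The first two steps should be largely formal. Each $\iota_n(a)$ is plainly affine on $MS_n(S)$, and matrix affinity of the family $(\iota_n(a))$ follows because for $\phi_i \in MS_{n_i}(S)$ and contractions $\gamma_i \in M_{n_i,n}$ with $\sum \gamma_i^* \gamma_i = 1$, $\sum \gamma_i^* \phi_i(a) \gamma_i$ equals the evaluation of $a$ under the matrix convex combination. Continuity on each level follows from the $w^*$-continuity of evaluation, and by the remark after Definition 3.4 of Webster--Winkler (already invoked implicitly in the preamble) it suffices to check it at $n=1$. The map $\iota$ is clearly unital, self-adjoint, and completely positive. For the reverse direction of the embedding claim, I would represent $S$ completely order isomorphically as an operator subsystem of some $B(H)$: given $[a_{ij}] \in M_n(S)$ that is not positive, one produces unit vectors $\xi_1,\dots,\xi_n \in H$ witnessing non-positivity, and these vectors assemble into a matrix state $\phi \in MS_n(S)$ at which $\iota^{(n)}([a_{ij}])$ fails to be positive. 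Hence $\iota$ is a complete order embedding.

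The main obstacle is surjectivity, and here I would use a duality argument that leverages the fact that the matrix state space of $A(MS(S))$ is canonically identified with $MS(S)$ itself, via the evaluation maps $\mathrm{ev}_\phi : A(MS(S)) \to M_n$ for $\phi \in MS_n(S)$ sending $f \mapsto f_n(\phi)$. Composing with $\iota$ recovers $\phi$, so $\iota^*$ is a continuous matrix affine bijection from $MS(A(MS(S)))$ onto $MS(S)$. Suppose for contradiction that $\iota(S)$ is a proper closed operator subsystem of $A(MS(S))$, which is closed since $\iota$ is a complete isometry by Step 2. Picking $f \in A(MS(S)) \setminus \iota(S)$, an Arveson-type Hahn--Banach argument in the operator system category produces a matrix state of $A(MS(S))$ that is not determined by its restriction to $\iota(S)$, contradicting the bijectivity of the matrix state space identification. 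Equivalently, one can argue at level one by observing that $f_1$ is a continuous affine function on $S(S)$ which extends to a $w^*$-continuous functional on $S^*$, hence determines an element $\tilde a \in S^{**}$, and then use the matrix affine condition at all higher levels together with the bidual identification to force $\tilde a \in S$.

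The expected difficulty concentrates in the surjectivity step; the embedding step is a standard consequence of having enough matrix states to detect positivity, whereas surjectivity requires the genuinely operator-system-theoretic input that matrix affine data on $MS(S)$ is no richer than $S$ itself. Once surjectivity is established, $\iota$ is a unital complete order bijection with completely positive inverse (the inverse of a complete order embedding onto its image), hence an isomorphism of operator systems, proving the theorem.
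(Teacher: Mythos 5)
The paper offers no proof of this statement: it is quoted verbatim from Webster--Winkler \cite{WW1999}*{Proposition 3.5}, so any self-contained argument you give is being measured against their proof rather than anything in this paper. Your setup (the evaluation map $\iota(a)_n(\phi)=\phi(a)$) and your complete-order-embedding step are correct and standard: representing $S\subseteq \B(H)$ and compressing to the finite-dimensional span of vectors witnessing non-positivity does produce a matrix state detecting the failure, so $\iota$ is a unital complete order embedding.

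The gap is in your primary surjectivity argument, which is circular. You ``leverage the fact that the matrix state space of $A(MS(S))$ is canonically identified with $MS(S)$ via the evaluation maps $\mathrm{ev}_\phi$.'' The evaluation maps only give a section of the restriction map $\iota^\ast: MS(A(MS(S)))\to MS(S)$, hence surjectivity of $\iota^\ast$; the \emph{injectivity} of $\iota^\ast$ --- that every matrix state of $A(MS(S))$ is determined by its values on $\iota(S)$, equivalently is an evaluation at some point of $MS(S)$ --- is precisely the second half of Webster--Winkler's Proposition 3.5 (Lemma \ref{lem:adjoint-complete-isometry-iff-homeomorphism} above) and is equivalent to, not a tool for, the surjectivity of $\iota$. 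For a general proper inclusion of operator systems the restriction map on matrix states is surjective but not injective, so you cannot assume the identification and then derive a contradiction from it. Your second, level-one argument is the right route, but the roles of the ingredients are misassigned: what places the candidate element in $S$ (rather than $S^{\ast\ast}$) is not the higher matrix levels but Kadison's function representation at level one --- $f_1$ is weak-$\ast$ continuous on the compact state space, its linear extension to $S^\ast$ is weak-$\ast$ continuous on the unit ball, and Krein--Smulian then forces it to come from an element $a\in S$ (assuming $S$ norm-closed, which is needed and should be stated). The higher levels are instead what show that this single $a$ represents all of $f$: for $\phi\in MS_n(S)$ and a unit vector $\xi\in\bC^n$, matrix affinity gives $\xi^\ast f_n(\phi)\xi=f_1(\xi^\ast\phi(\cdot)\xi)=\xi^\ast\phi(a)\xi$, and polarization yields $f_n(\phi)=\phi(a)=\iota(a)_n(\phi)$. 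With the first argument deleted and the second repaired along these lines, the proof is complete and agrees with Webster--Winkler's.
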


The next result is equivalent to the second part of Webster and Winkler's result \cite{WW1999}*{Theorem 3.5}. 

\begin{lem} \label{lem:adjoint-complete-isometry-iff-homeomorphism}
Let $\bK$ and $\bL$ be compact matrix convex sets, let $\phi : A(\bK) \to A(\bL)$ be a unital completely positive map and let $f : \bL \to \bK$ denote the corresponding induced map. Then $\phi$ is completely isometric if and only if $f$ is a matrix affine homeomorphism. 
\end{lem}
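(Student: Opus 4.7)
The plan is to mirror the proof of Lemma \ref{lem:adjoint-order-isomorphism-iff-homeomorphism}, replacing Kadison's representation theorem for operator subsystems of commutative C*-algebras with Webster and Winkler's matrix convex representation theorem for general operator systems. The underlying mechanism is a contravariant duality between compact matrix convex sets and operator systems: any continuous matrix affine map $g : \bK' \to \bK$ induces a ucp map $\phi_g : A(\bK) \to A(\bK')$ by pullback, $\phi_g(a)_n(\mu) = a_n(g_n(\mu))$, while any ucp map $\phi : A(\bK) \to A(\bK')$ induces a continuous matrix affine map $f_\phi : \bK' \to \bK$ by composition, $(f_\phi)_n(\mu) = \mu \circ \phi$, where the codomain is identified with $K_n = MS_n(A(\bK))$ via the first part of the Webster--Winkler theorem cited above. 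A direct check shows that these two constructions are mutually inverse and reverse compositions.

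For the reverse implication, suppose $f$ is a matrix affine homeomorphism. Applying the duality to $f^{-1} : \bK \to \bL$ produces a ucp map $\psi : A(\bL) \to A(\bK)$. By the contravariant functoriality of the correspondence, $\psi \circ \phi$ and $\phi \circ \psi$ are the ucp maps induced respectively by $f^{-1} \circ f = \id_\bL$ and $f \circ f^{-1} = \id_\bK$, hence both are the identity. Therefore $\phi$ is a linear bijection whose inverse is completely positive, i.e., a complete order isomorphism, and in particular a complete isometry.

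For the forward implication, suppose $\phi$ is a (bijective, unital) complete isometry, so that its inverse $\phi^{-1}: A(\bL) \to A(\bK)$ is automatically ucp. The duality then supplies a continuous matrix affine map $g : \bK \to \bL$, and the same functoriality argument forces $g \circ f = \id_\bL$ and $f \circ g = \id_\bK$. Hence $f$ is a matrix affine bijection with continuous inverse, i.e., a matrix affine homeomorphism.

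The principal subtlety lies in establishing the duality itself. One must check that the pullback of a matrix affine function by a continuous matrix affine map is again continuous and matrix affine (routine), that $f_\phi$ actually takes values in $\bK$ rather than merely in $MS(A(\bK))$ (which is exactly the content of the Webster--Winkler identification, the point where the representation theorem is essential), and that $f_\phi$ is continuous---which, by the remark following \cite{WW1999}*{Definition 3.4}, reduces to the weak-$*$ continuity of $(f_\phi)_1$, a statement about the adjoint of a bounded linear map. Once the duality is set up, both implications follow formally.
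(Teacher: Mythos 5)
The paper offers no proof of this lemma at all --- it is imported as (equivalent to) the second half of Webster and Winkler's Theorem 3.5 --- so your self-contained duality argument is supplying exactly what that citation encodes, and the mechanism is the right one: the contravariant correspondence between compact matrix convex sets with continuous matrix affine maps and operator systems with unital completely positive maps, with the representation theorem guaranteeing that $f_\phi$ takes values in $\bK = MS(A(\bK))$ and that $\phi$ is recovered from $f_\phi$. Your reverse implication ($f$ a matrix affine homeomorphism implies $\phi$ completely isometric) is complete and correct.

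Your forward implication, however, has a genuine gap: you begin by supposing that $\phi$ is a \emph{bijective} unital complete isometry, but surjectivity is not part of the hypothesis and cannot be derived. Without it the implication is false as literally stated: the unital inclusion $\bC \hookrightarrow \bC^2$ is a unital complete isometry from $A(\bK)$ into $A(\bL)$, where $\bK = MS(\bC)$ is a single point at every level and $\bL = MS(\bC^2)$ is not, yet the induced map $\bL \to \bK$ collapses $\bL$ to a point and is not injective. What complete isometry of $\phi$ actually buys you is \emph{surjectivity} of $f$ (given $\nu \in K_n$, extend $\nu \circ \phi^{-1}$ from $\phi(A(\bK))$ to $A(\bL)$ by Arveson's extension theorem); injectivity of $f$ requires surjectivity of $\phi$. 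So you should either read the hypothesis as ``$\phi$ is a unital complete order isomorphism onto $A(\bL)$,'' in line with the commutative Lemma \ref{lem:adjoint-order-isomorphism-iff-homeomorphism}, where ``order isomorphism'' is bijective by definition --- in which case your argument goes through as written --- or else record the two separate true statements ($\phi$ completely isometric if and only if $f$ is surjective; $\phi$ a complete order isomorphism if and only if $f$ is a matrix affine homeomorphism), which together cover everything the applications in Theorem \ref{thm:nc-boundary-iff-essential-extension} actually use.
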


\subsection{Matrix affine dynamical systems}

A {\em matrix affine dynamical system} is a triple $(\bK,G,h)$ consisting of a compact matrix convex space $\bK = (K_n)_{n \geq 1}$, a discrete group $G$ and an action of $G$ on $\bK$ in the form of a group homomorphism $h : G \to \operatorname{Homeo}(\bK)$ such that $h_s$ is a matrix affine homeomorphism for each $s \in G$.

Let $(S,G,\alpha)$ be a noncommutative dynamical system. By the representation theorem of Webster and Winkler, we can identify $S$ with the operator system $A(MS(S))$ of matrix affine functions on $MS(S)$. We obtain a corresponding matrix affine dynamical system $(MS(S),G,h)$, where $h$ denotes the action on $MS(S)$ induced by $\alpha$. For $n \geq 1$ and $\mu \in MS_n(S)$, $h_s(\mu) = \mu \circ \alpha_{s^{-1}}$.

\subsection{Boundaries for matrix affine dynamical systems}

In this section we obtain a duality result for noncommutative C*-dynamical systems that is analogous to Theorem \ref{thm:affine-boundary-iff-essential-extension}. Motivated by the results in the previous section, along with Webster and Winkler's representation theorem \cite{WW1999}*{Theorem 3.5}, we work within the framework of matrix convexity.

\begin{defn} \label{defn:matrix-affine-boundary}
Let $(\bK,G)$ be a matrix affine dynamical system. We say that a pair $((\bL,G),f)$ consisting of a matrix affine dynamical system $(\bL,G)$ and a continuous surjective matrix affine map $f : \bL \to \bK$ is a {\em boundary} for $(\bK,G)$ if whenever $\bL' \subseteq \bL$ is an invariant compact matrix convex subset satisfying $f(\bL') = \bK$, then $\bL' = \bL$.
\end{defn}

\begin{thm} \label{thm:nc-boundary-iff-essential-extension}
Let $(\bK,G)$ and $(\bL,G)$ be matrix affine dynamical systems and let $f : \bL \to \bK$ be an equivariant continuous surjective matrix affine map. Then $((\bL,G),f)$ is a boundary for $(\bK,G)$ if and only if the corresponding extension $((A(\bL),G),\iota)$ of $(A(\bK),G)$ is essential, where $\iota : A(\bK) \to A(\bL)$ denotes the equivariant embedding defined by $\iota(a)(\mu) = a(f(\mu))$ for $a \in A(\bK)$ and $\mu \in \bL$.
\end{thm}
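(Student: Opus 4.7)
The plan is to mirror the proof of Theorem~\ref{thm:affine-boundary-iff-essential-extension}, recast in the matrix affine category and using Lemma~\ref{lem:adjoint-complete-isometry-iff-homeomorphism} as the replacement for its commutative affine predecessor. Throughout, I identify every operator system that arises with its realization as matrix affine functions on its matrix state space via Webster--Winkler, so that equivariant unital completely positive maps on one side correspond to equivariant continuous matrix affine maps on the other.

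For the forward direction, suppose $((A(\bL),G),\iota)$ is essential and fix an invariant compact matrix convex subset $\bL' \subseteq \bL$ with $f(\bL') = \bK$. Let $\rho : A(\bL) \to A(\bL')$ be the restriction map, which is an equivariant unital completely positive map. The composition $\rho \circ \iota : A(\bK) \to A(\bL')$ sends $a$ to $a \circ f|_{\bL'}$, and since $f|_{\bL'} : \bL' \to \bK$ is a continuous matrix affine surjection, a direct check at every matrix level shows that $\rho \circ \iota$ is a complete order injection, i.e., an embedding. Essentiality then forces $\rho$ itself to be an embedding. The matrix affine map induced by $\rho$ is simply the inclusion $\bL' \hookrightarrow \bL$, so Lemma~\ref{lem:adjoint-complete-isometry-iff-homeomorphism} makes this inclusion a matrix affine homeomorphism, and we conclude $\bL' = \bL$.

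For the reverse direction, assume $((\bL,G),f)$ is a boundary. Let $(A(\bN),G)$ be a noncommutative dynamical system and $\phi : A(\bL) \to A(\bN)$ an equivariant unital completely positive map such that $\phi \circ \iota$ is an embedding. Let $g : \bN \to \bL$ denote the continuous matrix affine equivariant map induced by $\phi$ and set $\bL' = g(\bN)$. Matrix affineness of $g$ together with matrix convexity of $\bN$ makes $\bL'$ matrix convex at every level; continuity of $g$ and compactness of $\bN$ give closedness; equivariance of $\phi$ yields invariance. The induced matrix affine map associated to $\phi \circ \iota$ is $f \circ g$, so Lemma~\ref{lem:adjoint-complete-isometry-iff-homeomorphism} identifies $f \circ g : \bN \to \bK$ as a matrix affine homeomorphism. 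In particular $f(\bL') = (f \circ g)(\bN) = \bK$, so the boundary hypothesis forces $\bL' = \bL$, which says that $g$ is surjective; injectivity of $g$ at each level follows from injectivity of $f \circ g$. A continuous matrix affine bijection from the compact $\bN$ onto the Hausdorff $\bL$ is automatically a matrix affine homeomorphism, so a final appeal to Lemma~\ref{lem:adjoint-complete-isometry-iff-homeomorphism} gives that $\phi$ is an embedding.

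I expect the most delicate points to be essentially bookkeeping: verifying that $\bL' = g(\bN)$ inherits the matrix convex structure of $\bL$ at every level and sits inside $\bL$ as an invariant compact matrix convex subset in the sense of Definition~\ref{defn:matrix-affine-boundary}, and carefully matching the hypothesis ``embedding'' against the hypothesis of Lemma~\ref{lem:adjoint-complete-isometry-iff-homeomorphism} at each of its four invocations. Beyond this, the argument is a direct translation of the commutative affine proof, with Webster--Winkler's representation theorem playing the role that Kadison's representation theorem played there.
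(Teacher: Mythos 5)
Your proof is correct and follows essentially the same route as the paper's: both directions reduce to Lemma~\ref{lem:adjoint-complete-isometry-iff-homeomorphism} via the Webster--Winkler identification, with the restriction map $\rho$ handling one implication and the induced map $g$ with image $\bL' = g(\bN)$ handling the other. The extra details you supply (matrix convexity and invariance of $g(\bN)$, and the compact-to-Hausdorff argument upgrading the bijection $g$ to a homeomorphism) are points the paper leaves implicit, but they do not change the argument.
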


\begin{proof}
($\Rightarrow$)
Suppose $(A(\bL),G),\iota)$ is an essential extension of $(A(\bK),G)$ and let $\bL' \subseteq \bL$ be an invariant compact matrix convex subset satisfying $f(\bL') = \bK$. Let $\rho : A(\bL) \to A(\bL')$ denote the restriction map. Then $\rho \circ \iota$ is completely isometric, so by essentiality, $\rho$ is completely isometric. The restriction of the adjoint map of $\rho$ to $\bL'$ is the inclusion map from $\bL'$ into $\bL$. Hence by Lemma \ref{lem:adjoint-complete-isometry-iff-homeomorphism}, $\bL' = \bL$.

($\Leftarrow$)
Suppose $((\bL,G),f)$ is a boundary for $(\bK,G)$. Let $(\bN,G)$ be a matrix affine dynamical system and let $\phi : A(\bL) \to A(\bN)$ be an equivariant unital completely positive map such that $\phi \circ \iota$ is completely isometric. Let $g : \bN \to \bL$ denote the restriction of the adjoint of $\phi$ and let $\bL' = g(\bN)$. Since $\phi \circ \iota$ is completely isometric, it follows from Lemma \ref{lem:adjoint-complete-isometry-iff-homeomorphism} that $f \circ g$ is a matrix affine homeomorphism. In particular, $f(\bL') = \bK$. Since $((\bL,G),f)$ is a boundary for $(\bK,G)$, it follows that $\bL' = \bL$. Hence by Lemma \ref{lem:adjoint-complete-isometry-iff-homeomorphism}, $\phi$ is completely isometric.
\end{proof}

The next result follows immediately from Definition \ref{defn:matrix-affine-boundary} and Theorem \ref{thm:nc-boundary-iff-essential-extension}.

\begin{cor}
Let $(A,G)$ and $(B,G)$ be C*-dynamical systems. Let $\iota : A \to B$ be an equivariant embedding and let $f : MS(B) \to MS(A)$ denote the corresponding induced map. Then $((B,G),\iota)$ is an essential extension of $(A,G)$ if and only if whenever $\bK \subseteq MS(B)$ is an invariant closed matrix convex subset satisfying $f(\bK) = MS(A)$, then $\bK = MS(B)$.
\end{cor}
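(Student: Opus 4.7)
The plan is to reduce the statement directly to Theorem~\ref{thm:nc-boundary-iff-essential-extension} by setting $\bK = MS(A)$ and $\bL = MS(B)$, viewing them as matrix affine dynamical systems under the actions induced by $\alpha$ and $\beta$. By the Webster--Winkler representation theorem quoted above, the canonical maps $A \to A(MS(A))$ and $B \to A(MS(B))$ are isomorphisms of operator systems, and these isomorphisms are visibly $G$-equivariant (the relevant actions on $A(MS(\cdot))$ are induced from those on $A$ and $B$). Under these identifications, the given equivariant embedding $\iota \colon A \to B$ corresponds to the equivariant map $A(MS(A)) \to A(MS(B))$ sending $a$ to $a \circ f$, which is precisely the map $\iota$ appearing in Theorem~\ref{thm:nc-boundary-iff-essential-extension}.

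First I would verify that $f$ has all the properties required to apply Theorem~\ref{thm:nc-boundary-iff-essential-extension}. Continuity is immediate from the definition of the weak*-topology on matrix state spaces. Matrix affinity follows from a direct computation: for $\mu_i \in MS_{n_i}(B)$ and $\gamma_i \in M_{n_i,n}$ with $\sum \gamma_i^* \gamma_i = 1$,
\[
f_n\left(\sum_i \gamma_i^* \mu_i \gamma_i\right) = \left(\sum_i \gamma_i^* \mu_i \gamma_i\right) \circ \iota = \sum_i \gamma_i^* (\mu_i \circ \iota)\gamma_i = \sum_i \gamma_i^* f_{n_i}(\mu_i)\gamma_i.
\]
Equivariance is inherited from the equivariance of $\iota$. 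The one step that requires a genuine (though standard) input is surjectivity of $f$: given any $n \geq 1$ and any unital completely positive map $\varphi \colon A \to M_n$, Arveson's extension theorem yields a unital completely positive extension $\tilde\varphi \colon B \to M_n$ with $\tilde\varphi \circ \iota = \varphi$, i.e.\ $f_n(\tilde\varphi) = \varphi$.

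With these verifications in hand, the pair $((\bL,G), f) = ((MS(B),G), f)$ is a continuous surjective equivariant matrix affine system over $(\bK,G) = (MS(A),G)$, so Theorem~\ref{thm:nc-boundary-iff-essential-extension} applies verbatim. It states that $((MS(B),G), f)$ is a boundary for $(MS(A),G)$ in the sense of Definition~\ref{defn:matrix-affine-boundary}---that is, the only invariant compact matrix convex subset $\bK \subseteq MS(B)$ with $f(\bK) = MS(A)$ is $\bK = MS(B)$---if and only if the corresponding extension of operator systems $A(MS(A)) \hookrightarrow A(MS(B))$ is essential as a noncommutative dynamical system. Transporting this back along the Webster--Winkler identifications, the latter condition is exactly that $((B,G),\iota)$ is an essential extension of $(A,G)$, which completes the argument.

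There is no real obstacle here beyond bookkeeping; the only point that is not immediately definitional is surjectivity of $f$, which rests on Arveson's extension theorem. Everything else is a direct translation between the operator-system side (essential extensions) and the matrix-convex side (boundaries) already carried out in Theorem~\ref{thm:nc-boundary-iff-essential-extension}.
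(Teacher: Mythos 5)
Your proposal is correct and takes the same route as the paper, which simply records this corollary as an immediate consequence of Definition~\ref{defn:matrix-affine-boundary} and Theorem~\ref{thm:nc-boundary-iff-essential-extension} via the Webster--Winkler identification. The details you supply (matrix affinity and continuity of $f$, and surjectivity via Arveson's extension theorem) are exactly the routine verifications the paper leaves implicit.
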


Let $(\bK,G)$ be a matrix affine dynamical system and let $(A(\bK),G)$ denote the corresponding noncommutative dynamical system with injective hull $(I_G(A(\bK)),G)$. Let $\tilde{\bK} = MS(I_G(A(\bK)))$. Then by Webster and Winkler's representation theorem, $(I_G(A(\bK)),G)$ is isomorphic to $(A(\tilde{\bK}),G)$. The next result shows that the corresponding matrix affine dynamical system $(\tilde{\bK},G)$ is a kind of ``projective cover'' of $(\bK,G)$. 

\begin{prop} \label{prop:projectivity-dual-injectivity}
Let $(\bK,G)$ be a matrix affine dynamical system with corresponding noncommutative dynamical system $(A(\bK),G)$. Let $(A(\tilde{\bK}),G)$ denote the injective hull of $(A(\bK),G)$ with corresponding matrix affine dynamical system $(\tilde{\bK},G)$ and let $f : \tilde{\bK} \to \bK$ denote the map induced by the inclusion map $(A(\bK),G) \subseteq (A(\tilde{\bK}),G)$. Let $(\bL,G)$ be a matrix affine dynamical system and let $g : \bL \to \bK$ be an equivariant continuous surjective matrix affine map. Then there is an equivariant continuous matrix affine map $h : \tilde{\bK} \to \bL$ such that $f = g \circ h$. Moreover, $((h(\tilde{\bK}),G),g|_{h(\tilde{\bK})})$ is a boundary for $(\bK,G)$.

\begin{equation*}
\begin{tikzcd}
\tilde{\bK} \arrow{d}{h} \arrow{dr}{f} \\
\bL \arrow{r}{g} & \bK
\end{tikzcd}
\end{equation*}
\end{prop}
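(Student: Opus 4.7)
The strategy is to dualize everything via the Webster--Winkler representation theorem and then apply injectivity of $A(\tilde\bK) = I_G(A(\bK))$. First, pull back along $g$: the map $\iota : A(\bK) \to A(\bL)$ defined by $\iota(a)(\mu) = a(g(\mu))$ is an equivariant unital completely positive map, and since $g$ is surjective, Lemma \ref{lem:adjoint-complete-isometry-iff-homeomorphism} (applied fiberwise, or directly from the definition of the norm on matrix affine functions) shows $\iota$ is completely isometric, hence an embedding of noncommutative dynamical systems. Viewing $A(\bK) \subseteq A(\tilde\bK)$ as a morphism out of $A(\bK)$, the injectivity of $(A(\tilde\bK), G)$ supplies an equivariant unital completely positive extension $\psi : A(\bL) \to A(\tilde\bK)$ satisfying $\psi \circ \iota$ = inclusion.

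Next, dualize $\psi$. Identifying $\mu \in \tilde\bK_n$ with the point evaluation $A(\tilde\bK) \to M_n$, set $h_n(\mu) = \mu \circ \psi \in \bL_n$. The family $h = (h_n)$ is matrix affine and equivariant because $\psi$ is, it is continuous (the $n=1$ component is weak*-continuous, which by the remark after \cite{WW1999}*{Definition 3.4} suffices), and for $\mu \in \tilde\bK_n$ and $a \in A(\bK)$,
\[
(g \circ h)(\mu)(a) = h(\mu)(\iota(a)) = \mu(\psi(\iota(a))) = \mu(a) = f(\mu)(a),
\]
so $g \circ h = f$.

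It remains to verify the boundary property of $((h(\tilde\bK),G), g|_{h(\tilde\bK)})$. The image $h(\tilde\bK)$ is compact, matrix convex, and $G$-invariant, since $h$ is continuous, matrix affine, and equivariant. Because $A(\tilde\bK)$ is the injective hull of $A(\bK)$, the inclusion $A(\bK) \subseteq A(\tilde\bK)$ is an essential extension of noncommutative dynamical systems, so Theorem \ref{thm:nc-boundary-iff-essential-extension} tells us that $((\tilde\bK,G),f)$ is a boundary for $(\bK,G)$. Now suppose $\bL' \subseteq h(\tilde\bK)$ is invariant, compact, matrix convex, with $g(\bL') = \bK$. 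The preimage $\tilde\bK' := h^{-1}(\bL')$ is invariant, compact, and matrix convex (preimages of matrix convex sets under matrix affine maps are matrix convex), and since $\bL' \subseteq h(\tilde\bK)$, we have $h(\tilde\bK') = \bL'$. Therefore
\[
f(\tilde\bK') = g(h(\tilde\bK')) = g(\bL') = \bK,
\]
and the boundary property of $f$ forces $\tilde\bK' = \tilde\bK$, hence $\bL' = h(\tilde\bK)$.

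The main subtlety lies not in any deep computation but in bookkeeping the matrix-convex duality: checking that the ucp extension $\psi$ produces a genuine matrix affine map $h$ with $h(\tilde\bK)$ a matrix convex subset of $\bL$, and that one may pass between the boundary condition on $\tilde\bK$ and the one on $h(\tilde\bK)$ by taking the $h$-preimage. Once that correspondence is in hand, the result is an essentially formal consequence of the injectivity of $A(\tilde\bK)$ combined with Theorem \ref{thm:nc-boundary-iff-essential-extension}.
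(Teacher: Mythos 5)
Your proof is correct. The construction of $h$ is exactly the paper's: embed $A(\bK)$ into $A(\bL)$ via $g$, use injectivity of $A(\tilde\bK)=I_G(A(\bK))$ to produce the equivariant u.c.p.\ map $\psi$, and take $h$ to be its adjoint; the verification that $g\circ h=f$ is the same routine computation the paper leaves implicit. Where you diverge is in the boundary claim. The paper disposes of it in one line on the operator-system side: the chain $(A(\bK),G)\subseteq(\psi(A(\bL)),G)\subseteq(A(\tilde\bK),G)$ shows the intermediate system is an essential extension of $(A(\bK),G)$ (essentiality passes to intermediate extensions), and Theorem \ref{thm:nc-boundary-iff-essential-extension} then identifies $h(\tilde\bK)\cong MS(\psi(A(\bL)))$ as a boundary. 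You instead argue entirely on the matrix-convex side: since $((\tilde\bK,G),f)$ is itself a boundary for $(\bK,G)$, any invariant closed matrix convex $\bL'\subseteq h(\tilde\bK)$ with $g(\bL')=\bK$ pulls back under $h$ to an invariant compact matrix convex subset of $\tilde\bK$ mapping onto $\bK$ under $f$, which forces $h^{-1}(\bL')=\tilde\bK$ and hence $\bL'=h(\tilde\bK)$. Your route avoids having to match $h(\tilde\bK)$ with the matrix state space of the image operator system $\psi(A(\bL))$ (the one genuinely fussy identification in the paper's one-liner), at the cost of checking that preimages under matrix affine maps remain matrix convex --- which you do correctly. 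Both arguments are sound; yours is the more self-contained of the two.
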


\begin{proof}
We obtain inclusions $A(\bK) \subseteq A(\bL)$ and $A(\bK) \subseteq A(\tilde{\bK})$. By injectivity there is an equivariant unital completely positive map $\phi : A(\bL) \to A(\tilde{\bK})$ such that $\phi|_{A(\bK)} = \operatorname{id}|_{A(\bK)}$. Let $h : \tilde{\bK} \to \bL$ denote the restriction of the adjoint of $\phi$. Then $f = g \circ h$. The fact that $((h(\tilde{\bK}),G),g|_{h(\tilde{\bK})})$ is a boundary follows from the inclusion of noncommutative dynamical systems $(A(\bK),G) \subseteq (\phi(A(\bL)),G) \subseteq (A(\tilde{\bK}),G)$.
\end{proof}

The next result follows immediately from Proposition \ref{prop:projectivity-dual-injectivity}.

\begin{cor} \label{cor:contains-boundary}
Let $(\bK,G)$ and $(\bL,G)$ be matrix affine dynamical systems and let $g : \bL \to \bK$ be an equivariant continuous surjective matrix affine map. There is an invariant closed matrix convex subset $\bL' \subseteq \bL$ such that $((\bL',G), g|_{\bL'})$ is a boundary for $(\bK,G)$.
\end{cor}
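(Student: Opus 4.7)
The plan is to apply Proposition \ref{prop:projectivity-dual-injectivity} essentially verbatim; the corollary is really just a repackaging of the last sentence of that proposition. Given the equivariant continuous surjective matrix affine map $g : \bL \to \bK$, the proposition furnishes an equivariant continuous matrix affine map $h : \tilde{\bK} \to \bL$ satisfying $f = g \circ h$, where $\tilde{\bK} = MS(I_G(A(\bK)))$ and $f : \tilde{\bK} \to \bK$ is the canonical projection coming from the inclusion $(A(\bK),G) \subseteq (A(\tilde{\bK}),G)$.

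The natural candidate is therefore $\bL' := h(\tilde{\bK})$, understood levelwise as $\bL'_n := h_n(\tilde{K}_n) \subseteq L_n$. The verification splits into three routine checks. First, $\bL'$ is closed in $\bL$ because each $\tilde{K}_n$ is compact (weak* compact in $M_n(I_G(A(\bK)))^*$), each $h_n$ is continuous, and each $L_n$ is Hausdorff, so $h_n(\tilde{K}_n)$ is compact and hence closed. Second, $\bL'$ is matrix convex: matrix convex combinations of points in $h(\tilde{\bK})$ can be pulled back through the matrix affine map $h$, computed inside the matrix convex set $\tilde{\bK}$, and pushed forward again. Third, $\bL'$ is $G$-invariant, since $\tilde{\bK}$ is $G$-invariant and $h$ is equivariant.

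Finally, the boundary property $((h(\tilde{\bK}),G), g|_{h(\tilde{\bK})})$ is asserted directly by Proposition \ref{prop:projectivity-dual-injectivity}, so there is nothing further to prove. I do not foresee any genuine obstacle here: all of the substantive content (constructing $h$ via injectivity of $(I_G(A(\bK)),G)$ and extracting a boundary via the chain $(A(\bK),G) \subseteq (\phi(A(\bL)),G) \subseteq (A(\tilde{\bK}),G)$) is carried out in Proposition \ref{prop:projectivity-dual-injectivity}, and the corollary only exists to record the existential consequence in a form convenient for later reference.
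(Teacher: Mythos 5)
Your proposal is correct and is exactly the paper's route: the paper derives the corollary immediately from Proposition \ref{prop:projectivity-dual-injectivity} by taking $\bL' = h(\tilde{\bK})$, and your three routine verifications (compactness of the image, matrix convexity via the matrix affine identity $\sum \gamma_i^* h(\mu_i)\gamma_i = h(\sum \gamma_i^*\mu_i\gamma_i)$, and invariance from equivariance of $h$) are precisely the details the paper leaves implicit.
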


\subsection{Pseudo-expectations and noncommutative boundaries}

Let $(A,G)$ be a C*-dynamical system and let $E_A : A \times_r G \to A$ denote the canonical conditional expectation. Consider the invariant closed matrix convex subset $\bK_A = \{ \mu \circ E_A : \mu \in MS(A) \} \subseteq MS(A \times_r G)$. Let $\rho : \bK_A \to MS(A)$ denote the restriction map. Then $\rho$ is an equivariant continuous surjective matrix affine map and $((\bK_A,G),\rho)$ is a boundary for $(MS(A),G)$ in the sense of Definition \ref{defn:matrix-affine-boundary}. We say that $(\bK_A,G)$ is the {\em canonical boundary for $(MS(A),G)$ in $MS(A \times_r G)$}.

\begin{thm} \label{thm:nc-boundary-correspondence-pseudo-expectation}
Let $(A,G)$ be a C*-dynamical system. There is a bijective correspondence between boundaries for $(MS(A),G)$  in $(MS(A \times_r G),G)$ and pseudo-expectations for $(A,G)$.
\end{thm}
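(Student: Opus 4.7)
The plan is to exhibit mutually inverse constructions between pseudo-expectations $\phi : A \times_r G \to I_G(A)$ and boundaries $\bL \subseteq MS(A \times_r G)$ for $MS(A)$.

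For one direction, given a pseudo-expectation $\phi$, set $\bL_\phi = \phi^*(MS(I_G(A)))$, where $\phi^*$ is the adjoint $\mu \mapsto \mu \circ \phi$. Equivariance and weak$^*$-continuity of $\phi^*$ make $\bL_\phi$ an invariant, closed, matrix convex subset of $MS(A \times_r G)$. The identity $\phi|_A = \id_A$ together with injectivity of $M_n$ (used to extend matrix states on $A$ to $I_G(A)$) makes the restriction $\rho : \bL_\phi \to MS(A)$ surjective. To check that $\bL_\phi$ is a boundary, I would apply Theorem \ref{thm:nc-boundary-iff-essential-extension}: it suffices that $A(\bL_\phi)$ be an essential extension of $A$. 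Computing the kernel of the natural restriction map $A \times_r G \to A(\bL_\phi)$, one finds it equals $\ker \phi$, so $A(\bL_\phi)$ is canonically isomorphic to $\phi(A \times_r G) \subseteq I_G(A)$. Essentiality of $A \subseteq \phi(A \times_r G)$ then follows from essentiality of $A \subseteq I_G(A)$ by a standard extension argument using $G$-injectivity applied, if necessary, to the $G$-injective hull of the target.

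For the reverse direction, given a boundary $\bL$, Theorem \ref{thm:nc-boundary-iff-essential-extension} gives that $A(\bL)$ is an essential extension of $A$. By $G$-injectivity of $I_G(A)$, the identity on $A$ extends to an equivariant unital completely positive map $j : A(\bL) \to I_G(A)$, and essentiality forces $j$ to be an embedding. Moreover $j$ is unique: given two such embeddings $j_1, j_2$, $G$-injectivity of $I_G(A)$ provides an equivariant unital completely positive endomorphism $\tilde\psi$ of $I_G(A)$ with $\tilde\psi \circ j_1 = j_2$ and $\tilde\psi|_A = \id$, and Hamana's rigidity property of the injective hull (any such endomorphism fixing $A$ must be the identity) forces $\tilde\psi = \id_{I_G(A)}$, hence $j_1 = j_2$. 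Composing $j$ with the restriction map $r : A \times_r G \to A(\bL)$, $x \mapsto x|_\bL$, produces a pseudo-expectation $\phi_\bL = j \circ r$.

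It remains to verify that the two constructions are mutually inverse. Starting from $\phi$, under the identification $A(\bL_\phi) \cong \phi(A \times_r G)$ the restriction $r$ becomes $\phi$ and the unique embedding $j$ becomes the inclusion into $I_G(A)$, so $\phi_{\bL_\phi} = j \circ r = \phi$. Starting from $\bL$, one unwinds $\bL_{\phi_\bL} = r^*(j^*(MS(I_G(A))))$; the embedding $j$ gives $j^*(MS(I_G(A))) = MS(A(\bL))$ (again by injectivity of matrix algebras, extending any matrix state on $A(\bL)$ through $j$); and the Webster--Winkler representation theorem provides the matrix affine homeomorphism $\bL \cong MS(A(\bL))$, $\mu \mapsto \mathrm{ev}_\mu$, under which $r^*$ becomes the inclusion $\bL \hookrightarrow MS(A \times_r G)$, yielding $\bL_{\phi_\bL} = \bL$. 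I expect the main obstacle to be precisely this last identification: one has to keep track of several adjoint and pullback maps simultaneously and invoke Webster--Winkler at the right step to match abstract matrix states on the quotient $A(\bL)$ with actual points of $\bL$.
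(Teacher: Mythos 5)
Your proposal is correct and follows essentially the same route as the paper: both directions are the constructions the paper uses (a boundary yields the composition of the restriction map with an equivariant embedding of $A(\bL)$ into $I_G(A)$ supplied by $G$-injectivity and essentiality, while a pseudo-expectation $\phi$ yields the invariant compact matrix convex set $\phi^*(MS(I_G(A)))$, which coincides with the paper's image of $MS(\phi(A\times_r G))$ under the adjoint of $\phi$), with Theorem \ref{thm:nc-boundary-iff-essential-extension} mediating between boundaries and essential extensions. Your explicit verification that the two constructions are mutually inverse --- in particular the uniqueness of the embedding $j$ obtained from rigidity of the injective hull, without which the boundary-to-pseudo-expectation assignment would not obviously be well defined --- is a worthwhile supplement, as the paper's proof only exhibits the two maps and leaves the bijectivity check implicit.
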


\begin{proof}
($\Rightarrow$)
Let $((\bK,G),f) \subseteq MS(A \times_r G)$ be a boundary for $(MS(A),G)$. Then by Theorem \ref{thm:nc-boundary-iff-essential-extension}, $((A(\bK),G),\iota)$ is an essential extension of $(A,G)$, where $\iota : A \to A(\bK)$ denotes the equivariant embedding defined by $\iota(a)(y) = a(f(y))$ for $a \in A$ and $y \in \bK$. Hence there is an extension $((I_G(A),G),\iota)$ of $(A(\bK),G)$. Let $\phi : A \times_r G \to I_G(A)$ denote composition of $\iota$ with the restriction map to $A(\bK)$. Then $\phi$ is an equivariant unital completely positive map and $\phi|_A = \id|_A$. In particular, $\phi$ is a pseudo-expectation for $(A,G)$.

($\Leftarrow$)
Conversely, let $\phi : A \times_r G \to I_G(A)$ be a pseudo-expectation for $(A,G)$. Let $\bK = MS(\phi(A \times_r G))$. Then Webster and Winkler's representation theorem implies that we can identify $\phi(A \times_r G)$ with $A(\bK)$. Let $f : \bK \to MS(A \times_r G)$ denote the restriction of the adjoint of $\phi$ and let $g : MS(A \times_r G) \to MS(A)$ denote the restriction map. Then $((f(\bK), G),g)$ is a boundary for $(MS(A),G)$. 
\end{proof}

\section{Vanishing obstruction and partial representations}

In this section we identify a connection with the theory of partial C*-dynamical systems. For a reference, we refer the reader to the book of Exel \cite{E2017}.

\begin{defn}\label{defn:partial-dynamical-system}
Let $A$ be a unital C*-algebra and let $G$ be a discrete group. A {\em partial action} of $G$ on $A$ is a pair $(\{A_t\}_{t \in G}, \{\alpha_t\}_{t \in G})$ consisting of a family of ideals $\{A_t\}_{t \in G}$ in $A$ and a family of isomorphisms $\{\alpha_t \}_{t \in G}$ with $\alpha_t : A_{t^{-1}} \to A_t$ satisfying
\begin{enumerate}
\item $A_e = A$,
\item $\alpha_s(A_{s^{-1}} \cap A_t) = A_s \cap A_{st}$ for $s,t \in G$ and
\item $\alpha_s \circ \alpha_t = \alpha_{st}$ on $A_{t^{-1}} \cap A_{t^{-1}s^{-1}}$ for $s,t \in G$.
\end{enumerate}
The tuple $(A,G,\{A_t\}_{t \in G}, \{\alpha_t\}_{t \in G})$ is said to be a {\em partial C*-dynamical system}. 
\end{defn}

Let $(B,G,\beta)$ be a C*-dynamical system such that $B$ is injective. For $s \in G$ let $p_s \in B$ denote the largest $\beta_s$-invariant projection such that $\beta_s|_{Bp_s}$ is inner and let $u_s \in Bp_s$ be a unitary such that $\beta_s|_{Bp_s} = \Ad(u_s)$ and $u_e = 1$.  Let $B_s = Bp_s$. Then the tuple $(B, G, \{B_s\}_{s \in G}, \{\Ad(u_s)\}_{s \in G})$ is a partial C*-dynamical system in the sense of Definition \ref{defn:partial-dynamical-system} that we will refer to as the partial C*-dynamical system corresponding to $(B,G,\beta)$. This definition does not depend on the choice of $u$. It is easy to check that for $s,t \in G$, the projections $p_s$ and $p_t$ satisfy $p_s = p_{s^{-1}}$ and $p_s p_t \leq p_{st}$. Furthermore, by Theorem \ref{thm:prop-outer-implies-ess-prop-outer}, $\beta_t(p_s) = p_{tst^{-1}}$. 

\begin{defn} \label{defn:partial-representation}
Let $A$ be a C*-algebra and let $G$ be a discrete group. A {\em partial *-representation} of $G$ in $A$ is a partial isometry-valued map $u : G \to A$ satisfying
\begin{enumerate}
\item $u_e = 1$,
\item $u_s u_t u_{t^{-1}} = u_{st} u_{t^{-1}}$ for $s,t \in G$ and
\item $u_{t^{-1}} = (u_t)^*$ for $t \in G$.
\end{enumerate}
\end{defn}

For a C*-dynamical system $(B,G,\beta)$ such that $B$ is injective, let $(B, G, \{B_s\}_{s \in G}, \{\Ad(u_s)\}_{s \in G})$ denote the corresponding partial dynamical system. It is clear that the map $u : G \to B$ satisfies properties (1) and (3) in Definition \ref{defn:partial-representation}. It follows from \cite{E2017}*{Proposition 9.6} that $u$ is a partial *-representation if and only if $u_s u_t p_s p_t = u_{st}$.

\begin{defn}[Vanishing obstruction]
Let $(A,G)$ be a C*-dynamical system with extension $(I(A),G)$ and injective hull $(I_G(A),G)$. We will say that $(A,G,\alpha)$ has {\em vanishing obstruction} if the map $u : G \to I(A)$ defined as in Theorem \ref{thm:prop-outer-implies-ess-prop-outer} can be chosen to be a partial *-representation of $G$ satisfying $I(\alpha)_t(u_s) = u_{tst^{-1}}$ for all $s,t \in G$.
\end{defn}

\begin{prop} \label{prop:vanishing-obstruction-partial-representation}
Let $(A,G)$ be a C*-dynamical system with vanishing obstruction. For $s \in G$ let $p_s \in I(A)$ denote the largest $\alpha_s$-invariant projection in $I(A)$ such that $I(\alpha)_s|_{I(A)p_s}$ is inner. Then there is a partial *-representation $u : G \to I(A)$ satisfying
\begin{enumerate}
\item $u_s^* u_s = p_s = u_s u_s^*$ for $s \in G$,
\item $I(\alpha)_s|_{I(A)p_s} = \Ad(u_s)$ for $s \in G$,
\item $I(\alpha)_s(u_t) = u_{sts^{-1}}$ for $s,t \in G$.
\end{enumerate}
\end{prop}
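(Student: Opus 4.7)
The plan is to directly unpack the definition of vanishing obstruction together with the construction in Theorem \ref{thm:prop-outer-implies-ess-prop-outer}. By hypothesis I may fix a partial *-representation $u : G \to I(A)$ whose values $u_s$ arise as in Theorem \ref{thm:prop-outer-implies-ess-prop-outer} --- that is, each $u_s$ is a unitary in the corner $I(A)p_s$ satisfying $I(\alpha)_s(bp_s) = u_s b u_s^*$ for every $b \in I(A)$ --- and such that the equivariance $I(\alpha)_t(u_s) = u_{tst^{-1}}$ holds for all $s,t \in G$. I would then verify (1)--(3) in turn.

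For (1), $u_s$ being a unitary in the unital corner $p_s I(A) p_s = I(A)p_s$ (with local unit $p_s$) translates, when $u_s$ is regarded as an element of $I(A)$, to $u_s^* u_s = u_s u_s^* = p_s$. For (2), I would restrict the identity $I(\alpha)_s(bp_s) = u_s b u_s^*$ to $b \in I(A)p_s$, where $bp_s = b$; this gives $I(\alpha)_s(b) = u_s b u_s^*$, i.e.\ $I(\alpha)_s|_{I(A)p_s} = \Ad(u_s)$. Condition (3) is exactly the equivariance clause in the definition of vanishing obstruction (with the roles of $s$ and $t$ relabelled).

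There is no substantive obstacle here; the proposition amounts to a convenient restatement of the definition, extracted for later use. The one point worth emphasizing is the appeal to Theorem \ref{thm:prop-outer-implies-ess-prop-outer}, which ensures that each $u_s$ is a genuine unitary in $I(A)p_s$ rather than merely an isometry into that corner, so that the full equality $u_s^* u_s = u_s u_s^* = p_s$ in (1) is available; compatibly, the partial *-representation relation $u_{s^{-1}} = u_s^*$ matches the identity $p_{s^{-1}} = p_s$ that follows from the characterization of $p_s$ as the largest $I(\alpha)_s$-invariant projection on which $I(\alpha)_s$ is inner (which coincides with the largest $I(\alpha)_{s^{-1}}$-invariant projection on which $I(\alpha)_{s^{-1}}$ is inner).
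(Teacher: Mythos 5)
Your proposal is correct and matches the paper's treatment: the paper gives no explicit proof of this proposition, treating it as an immediate unpacking of the definition of vanishing obstruction together with the construction of $u_s$ as a unitary in the central corner $I(A)p_s$ from Theorem \ref{thm:prop-outer-implies-ess-prop-outer}, which is exactly the routine verification you carry out.
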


\begin{example} \label{ex:partial-cohomology-1}
Let $(A,G)$ be a  C*-dynamical system with $A$ commutative. Then the extension $(I(A),G)$ is also commutative. For $s \in G$, let $p_s \in I(A)$ denote the largest $I(\alpha)_s$-invariant projection such that $I(\alpha)_s|_{I(A)p_s}$ is inner. Since $I(A)$ is commutative, this is equivalent to the condition that $I(\alpha)_s|_{I(A)p_s}$ is trivial. By Theorem \ref{thm:prop-outer-implies-ess-prop-outer}, for $t \in G$, $I(\alpha)_s(p_t) = p_{sts^{-1}}$, so $(A,G)$ has vanishing obstruction.
\end{example}

\begin{example} \label{ex:partial-cohomology-2}
Let $(A,G,\alpha)$ be a C*-dynamical system with the property that for every $s \in G$, either $\alpha_s$ is properly outer or quasi-inner. Let $G_A = \{s \in G : \alpha_s \text{ is quasi-inner}\}$ and suppose that the cohomology group $H^2(G_A,UZ(I(A)))$ is trivial, where $UZ(I(A))$ denotes the unitary group of the center of $I(A)$. Then the $2$-cocycle $\sigma : G_A \times G_A \to UZ(I(A))$ defined by $\sigma(s,t) = u_s u_t u_{st}^* $ is a $2$-coboundary, so the restriction $u|_{G_A}$ can be chosen to be a representation of $G_A$. It is clear in this case that $u$ is a partial *-representation of $G$. Hence $(G,A)$ has vanishing obstruction. 

This applies in particular if $A$ is prime, since \cite{H1981}*{Theorem 7.1} and \cite{H1982b} imply that $I(A)$ has trivial center. In this case, the partial cohomology group $H^2(G_A,UZ(I(A)))$ reduces to the Mackey obstruction $H^2(G_A,\mathbb{T})$. It is well known that $H^2(G_A,\mathbb{T}) = 0$ if either $G_A$ is a free group (hence in particular if $G$ is free by the Nielsen-Schreier theorem) or if $G_A$ is finite and every Sylow subgroup is cyclic (hence in particular if $G_A$ is cyclic).
\end{example}

The condition of vanishing obstruction is related to the theory of partial cohomology for groups introduced by Dokuchaev and Khrypchenko \cite{DK2015}.

\begin{defn}
Let $(A,G,\{A_t\}_{t \in G}, \{\alpha_t\}_{t \in G})$ be a unital partial C*-dynamical system. A {\em partial $1$-cochain} is a function $f : G \to A$ such that $f(t) \in A_t^{-1}$ for all $t \in G$. A {\em partial $2$-cochain} is a function $\sigma : G \times G \to A$ such that $\sigma(s,t) \in (A_s \cap A_{st})^{-1}$ for all $s,t \in G$. A partial $2$-cochain $\sigma$ is a {\em partial $2$-cocycle} if it satisfies 
\[
\alpha_r(p_{r^{-1}} \sigma(s,t)\sigma(r,st))\sigma(rs,t) = \sigma(rs,t) \sigma(r,s)
\]
for all $r,s,t \in G$, where $p_r$ denotes the unit in $A_r$. We let $Z_p^2(G,A)$ denote the set of all partial $2$-cocycles.

A partial $2$-cocycle $\sigma$ is a {\em partial $2$-coboundary} if there is a partial 1-cochain $f : G \to A$ such that
\[
\sigma(s,t) = \alpha_s(p_{s^{-1}}f(t))f(st)^{-1}f(s)
\]
for all $s,t \in G$. We let $B_p^2(G,A)$ denote the set of all partial $2$-coboundaries.

The sets $Z_p^2(G,A)$ and $B_p^2(G,A)$ are abelian groups. The {\em partial 2-cohomology group} $H_p^2(G,A)$ is the quotient
\[
H_p^2(G,A) = Z_p^2(G,A)/B_p^2(G,A).
\]
We will refer to $H_p^2(G,A)$ as the {\em second partial cohomology group for $G$ with coefficients in $A$}.
\end{defn}

\begin{thm} \label{thm:the-partial-cocycle}
Let $(B,G,\beta)$ be a C*-dynamical system such that $B$ is injective and let $(B, \{B_s\}_{s \in G}, \{\Ad(u_s)\}_{s \in G})$ denote the corresponding partial dynamical system. Define $\sigma : G \times G \to B$ by
\[
\sigma(s,t) = u_s u_t u_{st}^*,
\]
for $s,t \in G$. Then $\sigma$ is a $Z(B)$-valued partial $2$-cocycle. If the map $u : G \to B$ can be chosen to be a partial *-representation of $G$, then $\sigma$ is a partial $2$-coboundary.
\end{thm}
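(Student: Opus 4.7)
The plan is to verify in three steps: first that each $\sigma(s,t)$ lies in $Z(B)$, second that $\sigma$ satisfies the partial $2$-cocycle identity, and third that, under the additional hypothesis that $u$ may be chosen to be a partial $*$-representation, $\sigma$ is a coboundary. Throughout, the key manipulations rely on two observations: the intertwining relations $u_r y = \beta_r(p_r y) u_r$ and $u_{st}^* y = \beta_{(st)^{-1}}(y p_{st}) u_{st}^*$ for $y \in B$, which follow from $u_r = u_r p_r$ together with $\beta_r|_{Bp_r} = \Ad(u_r)$; and the identity $\beta_s(p_s p_t) = p_s p_t$, obtained by noting that $p_s p_t \in Z(B) \cap Bp_s$ commutes with $u_s$, so $u_s(p_s p_t)u_s^* = (p_s p_t)u_s u_s^* = p_s p_t$. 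Combined with $\beta_s(p_t) = p_{sts^{-1}}$ from Theorem \ref{thm:prop-outer-implies-ess-prop-outer}, this yields the crucial consequence $p_s p_{sts^{-1}} = p_s p_t$.

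To verify $\sigma(s,t) \in Z(B)$, I first compute its support projections. Since $u_s u_t \in Bp_s p_t \subseteq Bp_{st}$, we have $u_s u_t p_{st} = u_s u_t$, so $\sigma(s,t)\sigma(s,t)^* = u_s u_t u_t^* u_s^* = u_s p_t u_s^* = p_s p_t$ by the commutation argument above. An analogous computation gives $\sigma(s,t)^*\sigma(s,t) = p_s p_t$, so $\sigma(s,t)$ is a partial isometry with central support $p_s p_t$. To see it is central in $B$, observe that both $u_s u_t$ and $u_{st}$ implement $\beta_{st}$ on $Bp_s p_t$: this follows from the partial action axiom $\alpha_s \circ \alpha_t = \alpha_{st}$ on $Bp_t p_{st} \supseteq Bp_s p_t$ together with the restriction formulas above. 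Hence $\sigma(s,t) = u_s u_t u_{st}^*$ commutes with $Bp_s p_t$, and combined with $\sigma(s,t) = \sigma(s,t) p_s p_t$ and the centrality of $p_s p_t$, we conclude $\sigma(s,t) \in Z(B)$.

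The partial $2$-cocycle identity is then verified by expanding $u_r u_s u_t$ in two ways. First, note $u_s u_t = \sigma(s,t) u_{st}$ (obtained by multiplying $\sigma(s,t) = u_s u_t u_{st}^*$ on the right by $u_{st}$ and using $u_s u_t p_{st} = u_s u_t$). Grouping as $u_r(u_s u_t) = u_r \sigma(s,t) u_{st}$ and applying $u_r y = \beta_r(p_r y) u_r$ to move $\sigma(s,t)$ past $u_r$ gives $\beta_r(p_r \sigma(s,t)) \sigma(r,st) u_{rst}$. Grouping instead as $(u_r u_s)u_t = \sigma(r,s) u_{rs} u_t = \sigma(r,s)\sigma(rs,t)u_{rst}$. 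Equating and right-cancelling $u_{rst}$ (using that both sides are already supported on $p_{rst}$) yields the partial $2$-cocycle identity.

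For the coboundary claim, if $u$ is a partial $*$-representation then Definition \ref{defn:partial-representation} gives $u_s u_t u_t^* = u_{st} u_t^*$; right-multiplying by $u_t$ and using $u_s u_t = u_s u_t p_t$ yields $u_s u_t = u_{st} p_t$. Hence $\sigma(s,t) = u_{st} p_t u_{st}^* = p_t p_{st}$, and since $u_s u_t \in Bp_s$ forces $p_t p_{st} \le p_s$, we obtain $\sigma(s,t) = p_s p_t p_{st} = p_s p_t$ (using $p_s p_t \le p_{st}$). Defining the partial $1$-cochain $f(s) = p_s$, which is the unit and hence trivially invertible in $B_s = Bp_s$, the coboundary formula gives $\alpha_s(p_{s^{-1}} f(t)) f(st)^{-1} f(s) = \beta_s(p_s p_t) \cdot p_{st} \cdot p_s = p_s p_t p_{st} p_s = p_s p_t = \sigma(s,t)$, once again invoking $\beta_s(p_s p_t) = p_s p_t$. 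The main obstacle throughout is the bookkeeping of the projections $p_s, p_t, p_{st}, p_{sts^{-1}}$ and their interactions with $\beta$; the identity $\beta_s(p_s p_t) = p_s p_t$ tames much of this and makes both the centrality computation and the coboundary verification go through cleanly.
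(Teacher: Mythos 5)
Your proof is correct and follows the same direct computational route the paper intends: the paper's own proof merely asserts that centrality, the cocycle identity, and the coboundary claim "are clear," and your proposal supplies exactly the verifications being elided, with the key identities ($\beta_s(p_sp_t)=p_sp_t$, hence $p_sp_{sts^{-1}}=p_sp_t$, and $u_su_t=\sigma(s,t)u_{st}$) correctly established. The only remark worth adding is that you verify the standard Dokuchaev--Khrypchenko identity $\alpha_r(p_{r^{-1}}\sigma(s,t))\sigma(r,st)=\sigma(r,s)\sigma(rs,t)$, which is the intended (and correct) reading of the paper's slightly garbled displayed cocycle condition.
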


\begin{proof}
It is clear that $\sigma(s,t) \in Z(B)$ for all $s,t \in G$, and from this it is not difficult to check that $\sigma$ is a partial $2$-cocycle. If $u$ is a partial *-representation, then it is clear that $\sigma$ is a partial $2$-coboundary. 
\end{proof}

In general, the second partial cohomology group can be much more complicated than the second cohomology group. For example, it can be non-trivial even for free groups (see e.g. the survey \cite{P2015}).

\section{The intersection property} \label{sec:vanishing-obstruction}

\subsection{Proper outerness}

\begin{lem} \label{lem:canonical-non-canonical-conditional-expectation}
Let $(A,G,\alpha)$ be a C*-dynamical system with vanishing obstruction and let $u : G \to I(A)$ be a partial *-representation as in Proposition \ref{prop:vanishing-obstruction-partial-representation}. For $s \in G$, let $q_s \in I_G(A)$ denote the largest $I_G(\alpha)_s$-invariant projection such that $I_G(\alpha)_s|_{I_G(A)q_s}$ is inner. Then there is an equivariant conditional expectation $\phi : I_G(A) \times_r G \to I_G(A)$ satisfying $\phi(b \lambda_s) = b u_s q_s$ for $b \in I_G(A)$ and $s \in G$.
\end{lem}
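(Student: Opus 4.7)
Plan:

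Set $v_s := u_s q_s \in I_G(A)$; these are the intended values $\phi(\lambda_s)$. I would first record the following structural properties of the family $\{v_s\}$. Using that $q_s \le p_s$ (Theorem~\ref{thm:prop-outer-implies-ess-prop-outer}), that $q_s \in Z(I_G(A))$ (Proposition~\ref{prop:largest-projection-inner-outer} combined with the inclusion $Z(I(A)) \subseteq Z(I_G(A))$ noted in Section~\ref{sec:glimm-ideals}), and that $u_s^* u_s = u_s u_s^* = p_s$, each $v_s$ is a partial isometry with $v_s^* v_s = v_s v_s^* = q_s$. The identity $\Ad(u_s) = I(\alpha)_s$ on $I(A) p_s$, combined with centrality and $I_G(\alpha)_s$-invariance of $q_s$, yields the intertwining relation $v_s b = I_G(\alpha)_s(b) v_s$ for every $b \in I_G(A)$. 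The partial $*$-representation identity $u_s u_t p_t = u_{st} p_t$ of Proposition~\ref{prop:vanishing-obstruction-partial-representation}, together with the maximality of $q_{st}$ applied to the partial isometry $v_s v_t$ (which itself intertwines $I_G(\alpha)_{st}$ with range $q_s q_t$), gives $v_s v_t = v_{st} q_s q_t$ and $q_s q_t \le q_{st}$. Finally, $I_G(\alpha)_t(v_s) = v_{tst^{-1}}$ follows from the analogous identities for $u$ and $q$.

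With these properties in hand, the formula $\phi(\sum_s a_s \lambda_s) := \sum_s a_s v_s$ on the dense $*$-subalgebra of finite sums in $I_G(A) \times_r G$ is manifestly $G$-equivariant and restricts to the identity on $I_G(A)$. To build the extension globally, I would pass through the auxiliary element $W_s := v_s \lambda_{s^{-1}} \in I_G(A) \times_r G$. The intertwining property gives $W_s b = b W_s$ for $b \in I_G(A)$; one then computes $W_s^* W_s = W_s W_s^* = q_s$, $W_{s^{-1}} = W_s^*$, the product relation $W_s W_t = (q_s q_{s^{-1} t s}) W_{ts}$, and the key identity $q_s \lambda_s = W_s^* v_s$. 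On the $C^*$-subalgebra $B \subseteq I_G(A) \times_r G$ generated by $I_G(A)$ and $\{W_s : s \in G\}$, the assignment $W_s \mapsto q_s$ determines a $G$-equivariant conditional expectation $\phi_B : B \to I_G(A)$, whose complete positivity rests on the fact that the $W_s$ commute with $I_G(A)$ and satisfy the twisted product above. I would then extend $\phi_B$ to an equivariant UCP map $\phi : I_G(A) \times_r G \to I_G(A)$ using the $G$-injectivity of $(I_G(A), G)$.

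To see that the extension satisfies $\phi(b\lambda_s) = b v_s$, I would invoke a multiplicative-domain argument: any equivariant UCP extension $\phi$ has $I_G(A)$ in its multiplicative domain, so $w := \phi(\lambda_s)$ satisfies $w b = I_G(\alpha)_s(b) w$ for every $b \in I_G(A)$. Polar-decomposition-type reasoning in $I_G(A)$, together with the maximality of $q_s$ as the largest $I_G(\alpha)_s$-invariant projection on which the action is inner, forces $w \in I_G(A) q_s$. Decomposing $b \lambda_s = b q_s \lambda_s + b(1-q_s)\lambda_s = b W_s^* v_s + b(1-q_s)\lambda_s$, multiplicative-domain then gives $\phi(b\lambda_s) = b \phi_B(W_s^*) v_s + b(1-q_s)\phi(\lambda_s) = b q_s v_s + 0 = b v_s$, as required. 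The main obstacle I anticipate is establishing the complete positivity of $\phi_B$ on the subalgebra $B$, which ultimately rests on the commutation of the $W_s$ with $I_G(A)$ together with the central-product structure $W_s W_t = (q_s q_{s^{-1} t s}) W_{ts}$ inherited from the partial $*$-representation property of $u$.
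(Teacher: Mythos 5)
The structural algebra in your first paragraph (the properties of $v_s = u_s q_s$ and of $W_s = v_s\lambda_{s^{-1}}$) is fine, and your concluding multiplicative-domain argument would work once a suitable $\phi$ exists. But the step you flag as "the main obstacle" — complete positivity of the assignment $W_s \mapsto q_s$ on the C*-subalgebra $B$ generated by $I_G(A)$ and $\{W_s\}$ — is not a technical detail to be checked from the commutation and twisted-product relations; it is the entire analytic content of the lemma, and those relations alone do not imply it. To see why, consider the degenerate situation where all $q_s = 1$ and the cocycle is trivial: then $B \supseteq C^*_r(G)$ and the map $W_s \mapsto q_s$ restricts to $\lambda_{s^{-1}} \mapsto 1$, i.e.\ to the trivial representation of $G$ on the \emph{reduced} group C*-algebra. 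That map is completely positive if and only if $G$ is amenable. So any proof of complete positivity must use some amenability hypothesis, and your proposal never identifies one.

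The input you are missing is Proposition \ref{prop:quasi-stabilizer-for-injective-is-amenable}: for each $z \in \Glimm(I_G(A))$ the quasi-stabilizer $G_z = \{s : z \in \supp(q_s)\}$ is amenable. The paper's proof localizes at each Glimm ideal $z$, observes that modulo $z$ the element $q_s$ survives only for $s \in G_z$, forms the covariant pair $(\sigma_z, \sigma_z\circ u)$ for the system $(I(A),G_z)$, and uses amenability of $G_z$ to pass from the \emph{full} crossed product $I(A)\times G_z$ to a UCP map on $I(A)\times_r G$ via the conditional expectation onto $I(A)\times_r G_z$; the maps over all $z$ are then assembled using $\bigcap_z z = 0$ and extended by $G$-injectivity. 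In effect, the reason $W_s \mapsto q_s$ is completely positive is that the supports of the $q_s$ are concentrated over points whose quasi-stabilizers are amenable, so that locally the map is the trivial representation of an amenable group, which is weakly contained in the regular one. Without this localization and the amenability of the $G_z$, your $\phi_B$ may simply fail to exist. (A secondary point: the paper deliberately builds the covariant pairs over $I(A)$ rather than $I_G(A)$, because the vanishing-obstruction hypothesis provides the partial $*$-representation $u$ in $I(A)$ only; your construction should likewise route through $I(A)$ before extending.)
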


\begin{proof}
For $z \in \Glimm(I_G(A))$, let $r_z \in I_G(A)^{**}$ denote the central open projection corresponding to $z$, so that $z = I_G(A)^{**}r_z$ and $(I_G(A)/z)^{**} \simeq I_G(A)^{**}(1-r_z)$. Let $G_z \leq G$ denote the quasi-stabilizer subgroup corresponding to $z$, so that $G_z = \{ s \in G : z \in \operatorname{supp}(q_s) \}$.

Let $\sigma_z : I(A) \to I_G(A)^{**}(1-r_z)$ denote the compression map. The subgroup $G_z$ is amenable by Proposition \ref{prop:quasi-stabilizer-for-injective-is-amenable}, and $(\sigma_z,\sigma_z \circ u)$ is a covariant pair for the C*-dynamical system $(I(A),G_z)$.  Therefore, we obtain a representation $\rho_z : I(A) \times G_z \to I_G(A)^{**}(1-r_z)$.

Let $E_{G_z} : I(A) \times_r G \to I(A) \times_r G_z$ denote the conditional expectation satisfying
\[
E_{G_z}(b \lambda_t) =
\begin{cases}
b \lambda_t & \text{if } t \in G_z, \\
0 & \text{otherwise},
\end{cases}
\]
for $b \in I(A)$ and $t \in G$. Define $\phi_z : I(A) \times_r G \to I_G(A)^{**}(1-r_z)$ by $\phi_z = \rho_z \circ E_{G_z}$. Then
\[
\phi_z(b\lambda_t) =
\begin{cases}
b u_t (1-r_z) & \text{if } t \in G_z, \\
0 & \text{otherwise},
\end{cases}
\]
for $b \in I(A)$ and $t \in G$.

Representations of $I_G(A)$ with kernels that are distinct Glimm ideals are disjoint. Hence the projections $\{1 - r_z : z \in \Glimm(I_G(A)) \}$ are mutually orthogonal. Define a unital completely positive map $\theta : I(A) \times_r G \to \oplus_{z \in \Glimm(I_G(A))} I_G(A)^{**}(1-r_z)$ by $\theta = \bigoplus_{z \in \Glimm(I_G(A))} \phi_z$.

The fact that $\cap_{z \in \Glimm(I_G(A))} z = 0$ implies there is a unital completely positive map $\psi : \bigoplus_{z \in \Glimm(I_G(A))} I_G(A)^{**}(1-r_z) \to I_G(A)$ such that the composition $\phi = \psi \circ \theta : I(A) \times G_z \to $ satisfies $\phi'(b \lambda_t) = b u_t q_t$ for $b \in I(A)$ and $t \in G$.

Since $(A,G)$ has vanishing obstruction, Theorem \ref{thm:prop-outer-implies-ess-prop-outer} implies $\phi'$ is an equivariant unital completely positive map. By the injectivity of $(I_G(A),G)$, we may extend it to an equivariant unital completely positive map $\phi : I_G(A) \times_r G \to I_G(A)$.
\end{proof}

\begin{rem}
It is necessary to work with $I(A)$ in the proof of Lemma \ref{lem:canonical-non-canonical-conditional-expectation} because the assumption that $(A,G)$ has vanishing obstruction applies to $I(A)$, and not necessarily to $I_G(A)$. 
\end{rem}

\begin{thm} \label{thm:vanishing-obstruction-intersection-property-iff-properly-outer}
Let $(A,G)$ be a C*-dynamical system. Then $(A,G)$ has the intersection property if its injective hull $(I_G(A),G)$ is properly outer. If $(A,G)$ has vanishing obstruction, then the converse is also true.
\end{thm}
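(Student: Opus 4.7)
The forward direction is immediate from earlier results: if $(I_G(A),G)$ is properly outer, then Theorem~\ref{thm:properly-outer-implies-unique-pseudo-expectation} forces every pseudo-expectation for $(A,G)$ to coincide with the canonical conditional expectation $E_A : A \times_r G \to A$, which is faithful, so Theorem~\ref{thm:intersection-property-iff-every-equivariant-pseudo-expectation-faithful} yields the intersection property for $(A,G)$.

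For the converse, my plan is to argue the contrapositive: assuming $(A,G)$ has vanishing obstruction and that $(I_G(A),G)$ is not properly outer, I will produce a non-zero ideal in $I_G(A) \times_r G$ having trivial intersection with $I_G(A)$, and then Theorem~\ref{thm:intersection-property-iff-extensions-have-it} transfers the failure of the intersection property back to $(A,G)$. By hypothesis there is $s \in G \setminus \{e\}$ for which the largest $I_G(\alpha)_s$-invariant projection $q_s \in I_G(A)$ with $I_G(\alpha)_s|_{I_G(A)q_s}$ inner is non-zero; Proposition~\ref{prop:largest-projection-inner-outer} shows $q_s$ is central in $I_G(A)$, and Theorem~\ref{thm:prop-outer-implies-ess-prop-outer} gives $p_s \geq q_s$, where $p_s \in I(A)$ is the analogous projection. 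Using vanishing obstruction, fix a partial $*$-representation $u : G \to I(A)$ as in Proposition~\ref{prop:vanishing-obstruction-partial-representation} and invoke Lemma~\ref{lem:canonical-non-canonical-conditional-expectation} to obtain an equivariant conditional expectation $\phi : I_G(A) \times_r G \to I_G(A)$ satisfying $\phi(b\lambda_t) = b u_t q_t$ for all $b \in I_G(A)$ and $t \in G$.

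The crux is to exhibit a non-zero element of $I_G(A) \times_r G$ annihilated by $\phi(\,\cdot^*\,\cdot\,)$. My proposed witness is $x = q_s \lambda_s - u_s q_s$. The canonical conditional expectation $E_{I_G(A)}$ sends $x$ to $-u_s q_s$, which is a non-zero partial isometry since $(u_s q_s)^* (u_s q_s) = q_s p_s q_s = q_s \ne 0$; in particular $x \ne 0$. On the other hand, combining the centrality of $q_s$ in $I_G(A)$, the identities $u_s u_s^* = u_s^* u_s = p_s$ and $p_s q_s = q_s$, and the $I_G(\alpha)_{s^{-1}}$-invariance of both $q_s$ and $u_s$ (which follows from $s^{-1} s (s^{-1})^{-1} = s$ via Theorem~\ref{thm:prop-outer-implies-ess-prop-outer} together with the partial $*$-representation relation $u_{s^{-1}} = u_s^*$), a short computation expresses $x^*x$ as $2q_s$ minus two cross terms each of which $\phi$ sends to $q_s$, yielding $\phi(x^*x) = 0$. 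Lemma~\ref{lem:equivariant-pseudo-expectation-ideal} then realises $J = \{y \in I_G(A) \times_r G : \phi(y^*y) = 0\}$ as a non-zero ideal of $I_G(A) \times_r G$ that meets $I_G(A)$ trivially (since $\phi|_{I_G(A)} = \operatorname{id}$), which is what was needed.

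The main obstacle is really the verification $\phi(x^*x) = 0$: although it is a short algebraic manipulation, it requires simultaneously keeping track of the partial $*$-representation structure on $I(A)$, the centrality of the $q_t$ in the larger algebra $I_G(A)$, and the covariance relations $\lambda_s^* b = I_G(\alpha)_{s^{-1}}(b) \lambda_s^*$ inside the reduced crossed product. Everything else is bookkeeping sitting on top of Lemma~\ref{lem:canonical-non-canonical-conditional-expectation} and the transfer principle of Theorem~\ref{thm:intersection-property-iff-extensions-have-it}.
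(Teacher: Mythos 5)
Your proof is correct and follows essentially the same route as the paper: the forward direction via the uniqueness and faithfulness of the pseudo-expectation (Theorems \ref{thm:properly-outer-implies-unique-pseudo-expectation} and \ref{thm:intersection-property-iff-every-equivariant-pseudo-expectation-faithful}), and the converse via the non-canonical conditional expectation of Lemma \ref{lem:canonical-non-canonical-conditional-expectation} applied to the witness $q_s\lambda_s - u_sq_s$, which is (up to sign) exactly the element $(u_s-\lambda_s)q_s$ the paper uses. The only cosmetic differences are that you verify $x \ne 0$ via the canonical expectation and extract the bad ideal directly from Lemma \ref{lem:equivariant-pseudo-expectation-ideal} rather than quoting the failure of faithfulness, before transferring back to $(A,G)$ by Theorem \ref{thm:intersection-property-iff-extensions-have-it} just as the paper does.
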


\begin{proof}
($\Leftarrow$)
If $(I_G(A),G)$ is properly outer, then $(I_G(A),G)$ has the intersection property by Theorem \ref{cor:properly-outer-implies-intersection-property}. It follows from Theorem \ref{thm:intersection-property-iff-extensions-have-it} that $(A,G)$ has the intersection property.

($\Rightarrow$)
Suppose $(A,G)$ has vanishing obstruction and $(I_G(A),G)$ is not properly outer. Let $u : G \to I(A)$ be a map as in Proposition \ref{prop:vanishing-obstruction-partial-representation}. For $s \in G$, let $q_s \in I_G(A)$ denote the largest $I_G(\alpha)_s$-invariant projection such that $I_G(\alpha)_s|_{I_G(A)q_s}$ is inner. By assumption there is $s \in G$ such that $0 \ne q_s$. By Theorem \ref{thm:prop-outer-implies-ess-prop-outer}, $q_s \leq p_s = u_s u_s^*$, so $u_s \ne 0$. 

Let $\phi : I_G(A) \times_r G \to I_G(A)$ be the equivariant conditional expectation as in Lemma \ref{lem:canonical-non-canonical-conditional-expectation}. Then
\begin{align*}
\phi(q_s (u_s - \lambda_s)^*(u_s - \lambda_s)) &= q_s \phi(u_s^* u_s - u_s^* \lambda_s - \lambda_s^* u_s + \lambda_s^* \lambda_s) \\
&= q_s - u_s^* \phi(\lambda_s) - \phi(\lambda_s^*) u_s + q_s \\
&= q_s - u_s^*u_s q_s - q_s u_s^* u_s + q_s \\
&= 0.
\end{align*}
Hence $\phi$ is not faithful, and it follows from Theorem \ref{thm:intersection-property-iff-every-equivariant-pseudo-expectation-faithful} that $(I_G(A),G)$ does not have the intersection property. Therefore, by Theorem \ref{thm:intersection-property-iff-extensions-have-it}, $(A,G)$ does not have the intersection property.
\end{proof}

\begin{cor}
Let $(A,G)$ be a C*-dynamical system such that $A$ is prime and let $G_A = \{s \in G : \alpha_s \text{ is quasi-inner} \}$. If $G_A$ has vanishing Mackey obstruction $H^2(G_A,\mathbb{T})$, then $(A,G)$ has the intersection property if and only if its injective hull $(I_G(A),G)$ is properly outer. 
\end{cor}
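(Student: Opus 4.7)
The plan is to deduce the corollary from Theorem \ref{thm:vanishing-obstruction-intersection-property-iff-properly-outer} by verifying that, under the stated hypotheses, the system $(A,G,\alpha)$ has vanishing obstruction. Once vanishing obstruction is established, the equivalence of the intersection property with proper outerness of $(I_G(A),G)$ is exactly the content of that theorem.

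First I would use the primeness of $A$ to show that each automorphism $\alpha_s$ is either quasi-inner or properly outer, so that the subgroup $G_A$ accounts for all non-properly-outer elements of $G$. Indeed, by Proposition \ref{prop:largest-ideal-inner-outer}, for each $s \in G$ there exist $\alpha_s$-invariant ideals $I_{\alpha_s}, J_{\alpha_s} \subseteq A$ with $I_{\alpha_s} \cap J_{\alpha_s} = 0$ such that $\alpha_s|_{I_{\alpha_s}}$ is quasi-inner and $\alpha_s|_{J_{\alpha_s}}$ is properly outer. Since $A$ is prime, one of these two ideals must be zero, and since $I_{\alpha_s} + J_{\alpha_s}$ is essential, the other must coincide with $A$. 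Hence either $\alpha_s$ is quasi-inner (in which case $s \in G_A$) or $\alpha_s$ is properly outer.

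Next I would invoke Hamana's theorem that $I(A)$ has trivial center when $A$ is prime (see \cite{H1981}*{Theorem 7.1} and \cite{H1982b}), so $UZ(I(A)) = \mathbb{T}$. Fix a family of partial isometries $\{u_s\}_{s \in G} \subseteq I(A)$ as in Theorem \ref{thm:prop-outer-implies-ess-prop-outer}, with $u_s^*u_s = u_s u_s^* = p_s$ and $I(\alpha)_s|_{I(A)p_s} = \Ad(u_s)$. For $s \notin G_A$ we have $p_s = 0$ and $u_s = 0$, while for $s \in G_A$ we have $p_s = 1$, so $u_s$ is a unitary in $I(A)$ implementing $I(\alpha)_s$. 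The 2-cocycle $\sigma : G_A \times G_A \to \mathbb{T}$ defined by $\sigma(s,t) = u_s u_t u_{st}^*$ represents a class in $H^2(G_A,\mathbb{T})$, which vanishes by hypothesis. Therefore the restriction $u|_{G_A}$ can be modified by a scalar-valued $1$-cochain so as to become a genuine unitary representation of $G_A$. Setting $u_s = 0$ for $s \notin G_A$ extends this to a partial $*$-representation of $G$ in the sense of Definition \ref{defn:partial-representation}: relation $(2)$ is automatic whenever any of $s$, $t$, or $st$ lies outside $G_A$ (both sides vanish), and otherwise reduces to the multiplicativity of the representation of $G_A$.

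Finally, I would verify the covariance relation $I(\alpha)_t(u_s) = u_{tst^{-1}}$ required for vanishing obstruction. By Theorem \ref{thm:prop-outer-implies-ess-prop-outer}, $I(\alpha)_t(u_s)$ is a unitary in $I(A)p_{tst^{-1}}$ implementing $I(\alpha)_{tst^{-1}}$, so $I(\alpha)_t(u_s) = u_{tst^{-1}} z(t,s)$ for some $z(t,s) \in UZ(I(A)p_{tst^{-1}})$. Since $I(A)$ has trivial center, $z(t,s) \in \mathbb{T}$, and the only potential obstacle is to arrange these scalars to be trivial simultaneously; this is the step I expect to require the most care. However, a standard cohomological argument shows that the collection $\{z(t,s)\}$ represents a class in $H^2(G_A,\mathbb{T})$ (or can be absorbed into the coboundary used to trivialize $\sigma$), so the vanishing of the Mackey obstruction allows a uniform choice. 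With this, $(A,G,\alpha)$ satisfies the definition of vanishing obstruction, and Theorem \ref{thm:vanishing-obstruction-intersection-property-iff-properly-outer} yields the desired equivalence.
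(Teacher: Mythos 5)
Your overall strategy --- show that primeness together with $H^2(G_A,\mathbb{T})=0$ gives vanishing obstruction, then quote Theorem \ref{thm:vanishing-obstruction-intersection-property-iff-properly-outer} --- is exactly the paper's route; the paper's proof is a one-line appeal to Example \ref{ex:partial-cohomology-2}. Your first two steps are essentially right. One small repair: from $I_{\alpha_s}\cap J_{\alpha_s}=0$ and primeness you conclude that one of the two ideals of Proposition \ref{prop:largest-ideal-inner-outer} vanishes, but the other is then only \emph{essential}, not necessarily equal to $A$; you should finish by invoking Remark \ref{rem:quasi-inner-on-essential-ideal} (quasi-innerness or proper outerness on an essential invariant ideal passes to $A$) to get the dichotomy. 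The identification $UZ(I(A))=\mathbb{T}$, the untwisting of the $2$-cocycle $\sigma(s,t)=u_su_tu_{st}^*$ on $G_A$, and the extension by zero to a partial $*$-representation of $G$ all match the example and are fine.

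The genuine gap is in your final step, and to your credit you have put your finger on a point that Example \ref{ex:partial-cohomology-2} itself passes over in silence: the definition of vanishing obstruction also demands $I(\alpha)_t(u_s)=u_{tst^{-1}}$ for \emph{all} $t\in G$, and this is really used (it is what makes the map in Lemma \ref{lem:canonical-non-canonical-conditional-expectation} equivariant). However, your proposed resolution does not work. The scalars $z(t,s)$ defined by $I(\alpha)_t(u_s)=z(t,s)\,u_{tst^{-1}}$ do not form a class in $H^2(G_A,\mathbb{T})$: a direct computation using Theorem \ref{thm:prop-outer-implies-ess-prop-outer} and the multiplicativity of $u|_{G_A}$ shows that $z(t,\cdot)$ is a character of $G_A$ for each $t$, that $z(t_1t_2,s)=z(t_2,s)\,z(t_1,t_2st_2^{-1})$, and that $z(t,\cdot)=1$ for $t\in G_A$; moreover the only modifications of $u|_{G_A}$ preserving multiplicativity are $u_s\mapsto\chi(s)u_s$ for a character $\chi$ of $G_A$, which replace $z(t,s)$ by $z(t,s)\chi(s)\overline{\chi(tst^{-1})}$. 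The obstruction therefore lives in (a version of) $H^1(G/G_A,\widehat{G_A})$, a degree-one group for the quotient, not a degree-two group for $G_A$, and it is not killed by the hypothesis $H^2(G_A,\mathbb{T})=0$. Concretely, if $G_A$ is central in $G$ the allowed modifications change nothing at all, while $z$ can still be nontrivial: take $\alpha_s=\Ad(v)$ for $s\in G_A$ central and $t\notin G_A$ with $\alpha_t(v)=-v$, so that $z(t,s)=-1$ for every admissible choice of $u_s$. So the step "a standard cohomological argument / absorb into the coboundary" does not close; to complete the proof along these lines you would either have to justify the equivariance by a genuinely different argument or show that it is not actually needed where vanishing obstruction is invoked, neither of which your proposal does.
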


\begin{proof}
If $A$ is prime, then it follows as in Example \ref{ex:partial-cohomology-2} that $(A,G)$ has vanishing obstruction if and only if the Mackey obstruction $H^2(G_A,\mathbb{T})$ vanishes.
\end{proof}

Let $(A,G)$ be a C*-dynamical system with injective hull $(I_G(A),G)$. A key part of the proof of Lemma \ref{lem:canonical-non-canonical-conditional-expectation} is the amenability of quasi-stabilizers of points in $\Glimm(I_G(A))$.  However, if $G$ is amenable, then quasi-stabilizers of points in $\Glimm(I(A))$ are amenable, so we can instead construct the conditional expectation from $I(A) \times_r G$ to $I(A)$. By Theorem \ref{thm:inj-env-characterization-quasi-inner-properly-outer}, $(I(A),G)$ is properly outer if and only if $(A,G)$ is properly outer. Thus, arguing exactly as in the proof of Theorem \ref{thm:vanishing-obstruction-intersection-property-iff-properly-outer} yields the following results. Note that the forward direction of the next result is Corollary \ref{cor:properly-outer-implies-intersection-property}. 

\begin{thm} \label{thm:amenable-vanishing-obstruction-intersection-property-iff-properly-outer}
Let $(A,G)$ be a C*-dynamical system such that $G$ is amenable. If $(A,G)$ is properly outer, then it has the intersection property. If $(A,G)$ has vanishing obstruction, then the converse is also true. 
\end{thm}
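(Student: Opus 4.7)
The forward direction is exactly Corollary \ref{cor:properly-outer-implies-intersection-property}, so the content lies in the converse under the vanishing obstruction hypothesis. My plan is to mimic the proof of Theorem \ref{thm:vanishing-obstruction-intersection-property-iff-properly-outer}, with $I_G(A)$ replaced throughout by $I(A)$. The single observation that licenses this replacement is that when $G$ is amenable, every subgroup of $G$ is amenable, so in particular every quasi-stabilizer subgroup $G_y$ attached to a Glimm ideal $y \in \Glimm(I(A))$ is amenable. In the non-amenable setting the ascent to $I_G(A)$ was needed precisely to extract amenable quasi-stabilizers via Proposition \ref{prop:quasi-stabilizer-for-injective-is-amenable}; here amenability comes for free.

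Concretely, suppose $(A,G,\alpha)$ has vanishing obstruction and is not properly outer. By Theorem \ref{thm:inj-env-characterization-quasi-inner-properly-outer}, $(I(A),G)$ is also not properly outer, so there is some $s \in G \setminus \{e\}$ with the canonical projection $p_s \in I(A)$ nonzero. Fix a partial $*$-representation $u : G \to I(A)$ as in Proposition \ref{prop:vanishing-obstruction-partial-representation}, so in particular $u_s^*u_s = p_s$ and $u_s \neq 0$. The central step is to construct, following Lemma \ref{lem:canonical-non-canonical-conditional-expectation} but one layer shallower in the tower $A \subseteq I(A) \subseteq I_G(A)$, an equivariant conditional expectation $\phi : I(A) \times_r G \to I(A)$ satisfying $\phi(b\lambda_t) = b u_t p_t$ for every $b \in I(A)$ and $t \in G$. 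One defines $\phi$ fibrewise over $\Glimm(I(A))$: for each $y$, amenability of $G_y$ together with the covariant pair given by the compression $I(A) \to I(A)^{**}(1-r_y)$ and $u|_{G_y}$ yields a representation of $I(A) \times_r G_y$, precomposed with the canonical expectation $I(A) \times_r G \to I(A) \times_r G_y$. Disjointness of the representations attached to distinct Glimm ideals, together with the identity $\bigcap_y y = 0$, allow these fibrewise maps to be glued into a single equivariant unital completely positive map, and the vanishing obstruction hypothesis is what makes this assembly both well defined and $G$-equivariant.

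With $\phi$ in hand, the computation completing the argument is the same as in Theorem \ref{thm:vanishing-obstruction-intersection-property-iff-properly-outer}: since $I(A)$ sits in the multiplicative domain of $\phi$,
\[
\phi\bigl(p_s(u_s-\lambda_s)^*(u_s-\lambda_s)\bigr) = p_s\bigl(p_s - p_s - p_s + 1\bigr) = 0,
\]
while $p_s(u_s-\lambda_s)^*(u_s-\lambda_s) \neq 0$ because $u_s \neq \lambda_s p_s$ inside $I(A) \times_r G$. Composing $\phi$ with the inclusion $I(A) \hookrightarrow I_G(A)$ produces a non-faithful pseudo-expectation for $(I(A),G)$, so Theorem \ref{thm:intersection-property-iff-every-equivariant-pseudo-expectation-faithful} shows that $(I(A),G)$ lacks the intersection property, and Theorem \ref{thm:intersection-property-iff-extensions-have-it} then transfers this failure down to $(A,G)$. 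The principal technical obstacle is the assembly step for $\phi$ — verifying that the fibrewise covariant representations indexed by $y \in \Glimm(I(A))$ glue into a single globally defined, $G$-equivariant unital completely positive map into $I(A)$ — but this is a direct adaptation of the construction in Lemma \ref{lem:canonical-non-canonical-conditional-expectation}, in a setting where amenability of the relevant quasi-stabilizers is supplied automatically by amenability of $G$.
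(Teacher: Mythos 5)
Your proposal is correct and is essentially the paper's own argument: the paper proves this theorem by noting that amenability of $G$ makes the quasi-stabilizers of points in $\Glimm(I(A))$ automatically amenable, so the conditional expectation of Lemma \ref{lem:canonical-non-canonical-conditional-expectation} can be built one level down, from $I(A) \times_r G$ onto $I(A)$, after which the non-faithfulness computation and the transfer via Theorems \ref{thm:intersection-property-iff-every-equivariant-pseudo-expectation-faithful} and \ref{thm:intersection-property-iff-extensions-have-it} proceed exactly as in Theorem \ref{thm:vanishing-obstruction-intersection-property-iff-properly-outer}. Your use of Theorem \ref{thm:inj-env-characterization-quasi-inner-properly-outer} to pass between proper outerness of $(A,G)$ and of $(I(A),G)$ is likewise the step the paper highlights.
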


\begin{cor}
Let $(A,G)$ be a C*-dynamical system such that $A$ is prime and $G$ is amenable and let $G_A = \{s \in G : \alpha_s \text{ is quasi-inner} \}$. If $G_A$ has vanishing Mackey obstruction $H^2(G_A,\mathbb{T})$, then $(A,G)$ has the intersection property if and only if it is properly outer.
\end{cor}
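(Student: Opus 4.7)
The plan is to reduce this corollary directly to Theorem~\ref{thm:amenable-vanishing-obstruction-intersection-property-iff-properly-outer}. The only substantive work is to verify that, under the hypotheses, $(A,G,\alpha)$ has vanishing obstruction; once this is in hand, Theorem~\ref{thm:amenable-vanishing-obstruction-intersection-property-iff-properly-outer} gives the desired equivalence between the intersection property and proper outerness.

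First I would exploit primeness of $A$ via the fact, cited in Example~\ref{ex:partial-cohomology-2}, that the results of Hamana \cite{H1981}*{Theorem 7.1} and \cite{H1982b} force $Z(I(A)) = \bC1$. This has two useful consequences. On the one hand, $UZ(I(A))$ coincides with $\mathbb{T}$, so the partial $2$-cohomology group that controls the obstruction in Example~\ref{ex:partial-cohomology-2} reduces literally to the Mackey obstruction $H^2(G_A,\mathbb{T})$. On the other hand, by Proposition~\ref{prop:largest-projection-inner-outer}, for each $s \in G$ the projection $p_s \in I(A)$ is central, hence lies in $\{0,1\}$. Thus each $I(\alpha)_s$ is either inner or properly outer, and by Theorem~\ref{thm:inj-env-characterization-quasi-inner-properly-outer} each $\alpha_s$ is either quasi-inner (so that $s \in G_A$) or properly outer. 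This places us squarely in the setting of Example~\ref{ex:partial-cohomology-2}.

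With this dichotomy available, the vanishing hypothesis $H^2(G_A,\mathbb{T}) = 0$ lets us rescale the unitaries $\{u_s\}_{s \in G_A}$ implementing $I(\alpha)_s|_{I(A)p_s}$ so that the $2$-cocycle $\sigma(s,t) = u_s u_t u_{st}^*$ becomes trivial on $G_A$; setting $u_s = 0$ for $s \notin G_A$ then produces a partial $*$-representation $u : G \to I(A)$ satisfying the equivariance condition $I(\alpha)_t(u_s) = u_{tst^{-1}}$ of Proposition~\ref{prop:vanishing-obstruction-partial-representation}. (The equivariance is automatic because $p_{tst^{-1}} = I(\alpha)_t(p_s)$ is already scalar, so $I(\alpha)_t(u_s)$ differs from $u_{tst^{-1}}$ only by a scalar, which can be absorbed into the cocycle rescaling performed in the previous step.) Thus $(A,G,\alpha)$ has vanishing obstruction.

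This same reduction was already carried out for the non-amenable case in the corollary following Theorem~\ref{thm:vanishing-obstruction-intersection-property-iff-properly-outer}, and no new obstacle arises for amenable $G$. The only delicate point is arranging for a single partial representation $u$ to both trivialize $\sigma$ on $G_A$ and intertwine properly with $\alpha$ under conjugation; this is handled by the standard observation that when $Z(I(A)) = \bC$, the equivariance equation forces only a scalar ambiguity which disappears after the Mackey rescaling. Invoking Theorem~\ref{thm:amenable-vanishing-obstruction-intersection-property-iff-properly-outer} then completes the proof.
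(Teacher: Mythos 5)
Your proposal is correct and follows the paper's own route exactly: the paper derives this corollary by noting that primeness of $A$ forces $Z(I(A))=\bC 1$, so that, as in Example~\ref{ex:partial-cohomology-2}, $(A,G)$ has vanishing obstruction if and only if the Mackey obstruction $H^2(G_A,\mathbb{T})$ vanishes, and then invokes Theorem~\ref{thm:amenable-vanishing-obstruction-intersection-property-iff-properly-outer}. Your extra observations (the inner/properly outer dichotomy from centrality of the $p_s$, and the equivariance of the rescaled partial representation) merely unpack Example~\ref{ex:partial-cohomology-2} at a level of detail comparable to the paper's own treatment.
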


If $A$ is commutative, then Example \ref{ex:partial-cohomology-1} shows that $(A,G)$ has vanishing obstruction. Since proper outerness is equivalent to topological freeness for commutative C*-dynamical systems, Theorem \ref{thm:amenable-vanishing-obstruction-intersection-property-iff-properly-outer} is a significant generalization of a result of Kawamura and Tomiyama \cite{KT1990}*{Theorem 4.1} (see also \cite{AS1994}*{Theorem 2}). The next result generalizes \cite{KT1990}*{Theorem 4.4} (see also \cite{AS1994}*{Corollary 3}.

\begin{cor}
Let $(A,G)$ be a minimal C*-dynamical system such that $G$ is amenable. If $(A,G)$ is properly outer, then the reduced crossed product $A \times_r G$ is simple. If $(A,G)$ has vanishing obstruction, then the converse is also true.  
\end{cor}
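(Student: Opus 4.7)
The plan is to deduce this corollary directly from Theorem~\ref{thm:amenable-vanishing-obstruction-intersection-property-iff-properly-outer} by translating the intersection property into simplicity of the reduced crossed product in the minimal setting. The only work required is the elementary observation that for a minimal C*-dynamical system $(A,G)$, the intersection property for $(A,G)$ is equivalent to simplicity of $A \times_r G$.

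First I would verify this equivalence. Assume $(A,G)$ has the intersection property, and let $J \subseteq A \times_r G$ be a nonzero ideal. Then $A \cap J$ is a nonzero ideal of $A$, and it is invariant since $J$ is globally invariant under the inner automorphisms implemented by the generators $\lambda_s$. Minimality then forces $A \cap J = A$, so $1 \in J$ and $J = A \times_r G$. Conversely, if $A \times_r G$ is simple then every nonzero ideal of $A \times_r G$ equals the whole algebra and so has nonzero intersection with $A$, giving the intersection property trivially.

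With this equivalence in hand, the corollary follows immediately. If $(A,G)$ is properly outer, then Theorem~\ref{thm:amenable-vanishing-obstruction-intersection-property-iff-properly-outer} yields the intersection property, and by the above $A \times_r G$ is simple. If in addition $(A,G)$ has vanishing obstruction and $A \times_r G$ is simple, then $(A,G)$ has the intersection property, so again by Theorem~\ref{thm:amenable-vanishing-obstruction-intersection-property-iff-properly-outer} it is properly outer.

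There is no substantive obstacle here; the content is entirely packaged in the preceding theorem, and the deduction only uses that minimality converts the intersection property into simplicity via the standard fact that $A \cap J$ is an invariant ideal for any ideal $J \subseteq A \times_r G$.
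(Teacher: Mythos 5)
Your proposal is correct and matches the paper's (implicit) argument exactly: the paper states this corollary as an immediate consequence of Theorem~\ref{thm:amenable-vanishing-obstruction-intersection-property-iff-properly-outer} together with the standard observation, recorded in the introduction, that for a minimal system the intersection property is equivalent to simplicity of $A \times_r G$. Your verification of that equivalence (using that $A \cap J$ is an invariant ideal because $\lambda_s a \lambda_s^* = \alpha_s(a)$ lies in both $A$ and $J$) is the right justification.
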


The role of the Mackey obstruction in the next result follows as in Example \ref{ex:partial-cohomology-2}.

\begin{cor}
Let $(A,G)$ be a minimal C*-dynamical system such that $A$ is prime and $G$ is amenable and let $G_A = \{s \in G : \alpha_s \text{ is quasi-inner} \}$. If $G_A$ has vanishing Mackey obstruction $H^2(G_A,\mathbb{T})$, then the reduced crossed product $A \times_r G$ is simple if and only if $(A,G)$ is properly outer.
\end{cor}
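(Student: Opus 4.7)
The plan is to derive this corollary as an immediate combination of the preceding corollary (the minimal amenable version) with the identification of vanishing obstruction for prime $A$ spelled out in Example \ref{ex:partial-cohomology-2}. Minimality of $(A,G)$ means $A$ has no non-trivial invariant ideals, so the intersection map from $\operatorname{Id}(A \times_r G)$ to invariant ideals of $A$ is bijective precisely when $A \times_r G$ is simple; that is, the intersection property for $(A,G)$ is equivalent to simplicity of $A \times_r G$.

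First I would apply the preceding corollary to the minimal amenable system $(A,G)$: properly outer implies $A \times_r G$ simple, and under vanishing obstruction the converse also holds. So it suffices to show that when $A$ is prime, vanishing of the Mackey obstruction $H^2(G_A,\mathbb{T})$ implies vanishing obstruction in the sense of Section 8.

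For this I would follow Example \ref{ex:partial-cohomology-2}. Since $A$ is prime, the cited results of Hamana give $Z(I(A)) = \bC$, so $UZ(I(A)) = \mathbb{T}$. For $s \in G_A$ the automorphism $\alpha_s$ is quasi-inner, so by Theorem \ref{thm:inj-env-characterization-quasi-inner-properly-outer} its extension to $I(A)$ is inner, and the projections $p_s$ of Proposition \ref{prop:largest-projection-inner-outer} satisfy $p_s = 1$, with the elements $u_s$ from Proposition \ref{prop:vanishing-obstruction-partial-representation} honest unitaries in $I(A)$. The partial $2$-cocycle $\sigma(s,t) = u_s u_t u_{st}^*$ of Theorem \ref{thm:the-partial-cocycle} thus restricts on $G_A \times G_A$ to a genuine $\mathbb{T}$-valued $2$-cocycle, namely the Mackey obstruction. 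If $H^2(G_A,\mathbb{T}) = 0$, one may rescale each $u_s$ for $s \in G_A$ by an appropriate scalar so that $u|_{G_A}$ becomes a group representation; for $s \notin G_A$ one has $p_s < 1$ and $u_s$ a proper partial isometry, and the partial $*$-representation axioms in Definition \ref{defn:partial-representation} are then verified exactly as in Example \ref{ex:partial-cohomology-2}. Finally the required equivariance $I(\alpha)_t(u_s) = u_{tst^{-1}}$ holds by Theorem \ref{thm:prop-outer-implies-ess-prop-outer} (after the scalar rescaling, which is preserved since $Z(I(A)) = \bC$ is $G$-fixed).

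There is essentially no real obstacle: all of the substantive content has been carried out in the preceding corollary and in Example \ref{ex:partial-cohomology-2}. The only subtle point to verify is that the rescaling which turns $u|_{G_A}$ into a representation is compatible with the equivariance condition $I(\alpha)_t(u_s) = u_{tst^{-1}}$, but this is automatic because the rescaling takes values in $\mathbb{T} \subseteq Z(I(A))$ and the action of $G$ on $Z(I(A)) = \bC$ is trivial. Hence the corollary follows at once.
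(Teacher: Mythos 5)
Your proposal is exactly the paper's argument: the corollary is obtained as an immediate combination of the preceding corollary (minimal system, amenable $G$) with Example \ref{ex:partial-cohomology-2}, which shows that for prime $A$ --- where $Z(I(A)) = \mathbb{C}$, so each $p_s$ is $0$ or $1$, every $\alpha_s$ is either properly outer or quasi-inner, and the partial $2$-cocycle reduces to the Mackey obstruction --- vanishing of $H^2(G_A,\mathbb{T})$ yields vanishing obstruction. The one place your justification is thin, namely that Theorem \ref{thm:prop-outer-implies-ess-prop-outer} gives $I(\alpha)_t(u_s) = u_{tst^{-1}}$ only up to a unimodular scalar (both unitaries implement $I(\alpha)_{tst^{-1}}$, so they agree only up to a central unitary), is elided in identical fashion in the paper's Example \ref{ex:partial-cohomology-2}, so your proof matches the paper's.
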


\subsection{Intrinsic characterization in the general case} \label{sec:intrinsic-general}

Let $A$ be a unital C*-algebra. There is a bijective correspondence between ideals in $A$ and subsets of $\Prim(A)$ that are closed in the hull-kernel topology (see e.g. \cite{P1979}*{Theorem 4.1.3}). For an ideal $I$ in $A$, we will let $\hull(I) \subseteq \Prim(A)$ denote the corresponding closed subset, so that
\[
\operatorname{hull}(I) = \{P \in \Prim(A) : P \supseteq I\}.
\]
Then $\hull(I)$ can be identified with $\Prim(A/I)$. We will say that a set $\{I_i\}$ of ideals in $A$ {\em covers} $A$ if $\cup \operatorname{hull}(I_i) = \Prim(A)$.

Let $\Sub(G)$ denote the set of subgroups of $G$ and let $\Sub_a(G) \subseteq \Sub(G)$ denote the subset of amenable subgroups of $G$. The Chabauty topology on $\Sub(G)$ coincides with the relative topology when $\Sub(G)$ is viewed as a subset of $\{0,1\}^G$ equipped with the product topology. For a net $(H_i)$ in $\Sub(G)$ and $H \in \Sub(G)$,  $\lim H_i = H$ if
\begin{enumerate}
\item every $h \in H$ eventually belongs to $H_i$ and
\item for every subnet $(H_j)$ of $(H_i)$, $\cap_j H_j \subseteq H$.
\end{enumerate}
We consider the compact dynamical system $(\Sub(G),G)$, where $\Sub(G)$ is equipped with the Chabauty topology and the action is by conjugation. Since $\Sub_a(G)$ is closed and invariant, we obtain an inclusion of compact dynamical systems $(\Sub_a(G),G) \subseteq (\Sub(G),G)$.

More generally, we consider the compact dynamical system $(\Glimm_p(A) \times \Sub(G),G)$, where $\Glimm_p(A) \times \Sub(G)$ is equipped with the product topology.

\begin{defn} \label{defn:uniformly-recurrent-subset}
Let $(A,G)$ be a C*-dynamical system. We will say that an invariant closed subset $X \subseteq \Glimm_p(A) \times \Sub(G)$ {\em covers} $(A,G)$ or is {\em $(A,G)$-covering} if it has the following two properties:
\begin{enumerate}
\item The ideals in $\pr_1(X)$ cover $A$, where $\pr_1 : \Glimm_p(A) \times \Sub(G) \to \Glimm_p(A)$ denotes the projection onto the first coordinate.
\item For each $(I,H) \in X$, $H \leq G_I$, where $G_I$ denotes the quasi-stabilizer subgroup corresponding to $I$.
\end{enumerate}
We will say that an $(A,G)$-covering subset $X \subseteq \Glimm_p(A) \times \Sub(G)$ is {\em amenable} if $\pr_2(X) \subseteq \Sub_a(G)$, where  $\pr_2 : \Glimm_p(A) \times \Sub(G) \to \Sub(G)$ denotes the projection onto the second coordinate. If $\pr_2(X) = \{\{e\}\}$, then we will say that $X$ is {\em trivial}.
\end{defn}

\begin{rem}
If $A = \bC$, then minimal $(A,G)$-covering subsets of $\Glimm_p(A) \times \Sub(G)$ correspond to uniformly recurrent subgroups of $G$ as defined by Glasner and Weiss \cite{GW2015}.
\end{rem}

\begin{lem} \label{lem:intermediate-map}
Let $(A,G,\alpha)$ be a C*-dynamical system with vanishing obstruction and let $u : G \to I(A)$ be a partial *-representation of $G$ as in Proposition \ref{prop:vanishing-obstruction-partial-representation}. For a pseudo-Glimm ideal $I \in \Glimm_p(A)$ and an amenable subgroup $H \leq G_I$, there is a unital completely positive map $\phi_{(I,H)} : I(A) \times_r G \to \B(H_{(I,H)})$ with the following properties:
\begin{enumerate}
\item The restriction $\phi_{(I,H)}|_A$ is equivalent to the composition of the quotient map from $A$ onto $A/I$ with the universal representation of $A/I$ and
\item $\phi_{(I,H)}(u_s^* \lambda_s) = \chi_H(s)1_{H_{(I,H)}}$ for $s \in G$, where $\chi_H$ denotes the indicator function for $H$.
\end{enumerate}
\end{lem}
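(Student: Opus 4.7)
The strategy is to build $\phi_{(I,H)}$ as a direct sum $\bigoplus_\omega \psi_\omega$ indexed by the states $\omega$ of $A$ vanishing on $I$, where each $\psi_\omega : I(A) \times_r G \to \B(H_\omega)$ is UCP, restricts on $A$ (up to the canonical unitary equivalence) to the GNS representation $\pi_\omega$, and satisfies $\psi_\omega(u_s^* \lambda_s) = \chi_H(s)\, 1_{H_\omega}$. Setting $H_{(I,H)} := \bigoplus_\omega H_\omega$, the direct sum $\phi_{(I,H)} := \bigoplus_\omega \psi_\omega$ will then restrict on $A$ to the universal representation of $A/I$ precomposed with the quotient map $A \to A/I$, and will satisfy property (2) termwise.

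To construct $\psi_\omega$, I would first pick a proper primal ideal $J$ of $I(A)$ with $A \cap J = I$, which exists since $I \in \Glimm_p(A)$. The hypothesis $H \leq G_I$ then forces $1 - p_s \in J$ for every $s \in H$. Next, extend $\omega$ to a state $\tilde\omega$ on $I(A)$ vanishing on $J$, using Hahn--Banach applied to the inclusion $A/I \hookrightarrow I(A)/J$. Let $(\pi_{\tilde\omega}, H_{\tilde\omega}, \xi_{\tilde\omega})$ be the resulting GNS triple; then $\pi_{\tilde\omega}(p_s) = 1_{H_{\tilde\omega}}$ for all $s \in H$.

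The crux is to check that $s \mapsto \pi_{\tilde\omega}(u_s)$ is a unitary representation of $H$ on $H_{\tilde\omega}$ forming a covariant pair with $\pi_{\tilde\omega}$ for the system $(I(A), H)$. Unitarity is immediate from $u_s^* u_s = u_s u_s^* = p_s$; the group-law identity $\pi_{\tilde\omega}(u_s)\pi_{\tilde\omega}(u_t) = \pi_{\tilde\omega}(u_{st})$ follows from the partial *-representation relation $u_s u_t p_t = u_{st} p_t$; and covariance follows from $u_s a u_s^* = \alpha_s(a) p_s$ for $a \in I(A)$. All three identities collapse to their honest group/covariance counterparts after applying $\pi_{\tilde\omega}$, since $\pi_{\tilde\omega}$ kills $1-p_s$ for every $s \in H$. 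Amenability of $H$ then integrates this covariant pair to a $*$-representation $\rho_\omega : I(A) \times_r H \to \B(H_{\tilde\omega})$. Composing with the canonical unital completely positive conditional expectation $E_H : I(A) \times_r G \to I(A) \times_r H$ gives $\psi'_\omega := \rho_\omega \circ E_H$, and a direct computation using $\rho_\omega(\lambda_s) = \pi_{\tilde\omega}(u_s)$ for $s \in H$ shows $\psi'_\omega(u_s^* \lambda_s) = \chi_H(s)\, 1_{H_{\tilde\omega}}$.

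To land inside the GNS space of $\omega$ rather than of $\tilde\omega$, I then compress by the projection $P_\omega$ onto the $A$-invariant closed subspace $H_\omega := \overline{\pi_{\tilde\omega}(A)\xi_{\tilde\omega}} \subseteq H_{\tilde\omega}$, setting $\psi_\omega(x) := P_\omega \psi'_\omega(x) P_\omega$. This is UCP; because $H_\omega$ is $A$-invariant, $\psi_\omega|_A$ is unitarily equivalent to $\pi_\omega$; and because $\psi'_\omega(u_s^* \lambda_s)$ is scalar, compression preserves its value. The principal technical obstacle is the covariance verification in the third paragraph, where the partial isometry $u_s$ becomes a genuine unitary only after one descends to the GNS quotient killing $J$; once that step is settled, the remainder of the argument is a routine application of amenability of $H$ and standard properties of the UCP expectation $E_H$.
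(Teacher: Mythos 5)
Your proof is correct and follows essentially the same route as the paper: both hinge on choosing a proper primal ideal $J$ of $I(A)$ with $A \cap J = I$, observing that $1-p_s \in J$ for $s \in H \leq G_I$ so that the $u_s$ become unitaries implementing $I(\alpha)_s$ in any representation killing $J$, integrating the resulting covariant pair for $(I(A),H)$ via amenability of $H$, and composing with the conditional expectation onto $I(A)\times_r H$. The only difference is bookkeeping for property (1): the paper applies this once to the universal representation of $I(A)/J$ and restricts to $A$, whereas you run it through each state of $A/I$ separately (via a Hahn--Banach extension to $I(A)/J$ and a compression back to the cyclic subspace) and take a direct sum, which if anything pins down the asserted equivalence with the universal representation of $A/I$ more explicitly.
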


\begin{proof}
Let $J$ be a proper primal ideal in $I(A)$ such that $A \cap J = I$. Then for $s \in G_I$, $1-p_s \in J$, so for $b \in I(A)$, 
\[
I(\alpha)_s(b) - u_s b u_s^* = I(\alpha)_s(b(1 - p_s)) \in J.
\]
Let $\sigma : I(A) \to \B(H_u)$ denote the composition of the quotient map from $I(A)$ to $I(A)/J$ with the universal representation of $I(A)/J$. For $t \in G_I$, $1_{H_u} = \sigma(p_t) = \sigma(u_t^* u_t)$. Since $u$ is a partial *-representation, it follows that
\[
\sigma(u_s)\sigma(u_t) = \sigma(u_s u_t u_t^* u_t) = \sigma(u_{st} u_t^* u_t) = \sigma(u_{st}).
\]
Hence $\sigma \circ u$ is a unitary representation of $G_I$.

The pair $(\sigma,\sigma \circ u)$ is a covariant representation for the C*-dynamical system $(I(A),H)$. Since $H$ is amenable, we obtain a representation $\pi : I(A) \times H \to \B(H_u)$. Let $\phi_{(I,H)} : I(A) \times_r G \to \B(H_u)$ denote the composition of the conditional expectation from $I(A) \times_r G$ to $I(A) \times H$ with $\pi$. Then for $b \in I(A)$ and $s \in H$, $\phi_{(I,H)}(b \lambda_s) = b \chi_H(s) u_s$. The result now follows from the fact that $\phi_{(I,H)}|_A = \sigma|_A$ and $\ker \sigma = J$. 
\end{proof}

\begin{thm} \label{thm:main}
Let $(A,G)$ be a C*-dynamical system. If every amenable $(A,G)$-covering subset of $\Glimm_p(A) \times \Sub(G)$ contains a trivial $(A,G)$-covering subset, then $(A,G)$ has the intersection property. If $(A,G)$ has vanishing obstruction, then the converse is also true.
\end{thm}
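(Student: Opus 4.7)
The plan is to use the pseudo-expectation framework of Theorem~\ref{thm:intersection-property-iff-every-equivariant-pseudo-expectation-faithful} together with the partial $*$-representation from Proposition~\ref{prop:vanishing-obstruction-partial-representation}: faithfulness of pseudo-expectations will be translated into a statement about the vanishing of off-diagonal pieces $\phi(\lambda_s)$, while the covering-subset hypothesis will supply the dynamical bookkeeping needed to detect failure of that vanishing.

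For the forward direction, no vanishing obstruction is needed. Using Theorem~\ref{thm:intersection-property-iff-extensions-have-it} I would reduce to showing that $(I_G(A),G)$ has the intersection property, and since $I_G(A)$ is injective every pseudo-expectation for $(I_G(A),G)$ is an equivariant conditional expectation $\psi$. The multiplicative-domain relation $\psi(\lambda_s)b = I_G(\alpha)_s(b)\psi(\lambda_s)$, together with the partial unitary $v_s \in I_G(A)q_s$ of Theorem~\ref{thm:prop-outer-implies-ess-prop-outer}, forces $\psi(\lambda_s) = c_s v_s$ for some central element $c_s \in Z(I_G(A))q_s$; equivariance of $\psi$ and the transformation rule $I_G(\alpha)_t(v_s) = v_{tst^{-1}}$ then make
\[
Z_\psi := \overline{\bigcup_{s \in G \setminus \{e\}} \supp(c_s)} \subseteq \Glimm(I_G(A))
\]
$G$-invariant. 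Using the dense $G_\delta$ set $L$ of Proposition~\ref{prop:continuous-dense-g-delta}, on which $G_z \leq G_{A \cap z}$ and $z \mapsto G_z$ is continuous, I would assemble an amenable $(A,G)$-covering subset $X \subseteq \Glimm_p(A) \times \Sub(G)$ (amenability from Proposition~\ref{prop:quasi-stabilizer-for-injective-is-amenable}, the covering property from Proposition~\ref{prop:quasi-glimm-ideals-covering}) whose ``nontrivial subgroup'' part is indexed by $L \cap Z_\psi$. The hypothesis would then produce a trivial covering subset inside $X$, forcing $Z_\psi$ to be disjoint from a dense invariant set in $\Glimm(I_G(A))$; combined with the centrality of each $c_s$, this yields $c_s = 0$ for all $s \neq e$, so $\psi$ is the canonical conditional expectation and hence faithful.

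For the converse, assume vanishing obstruction and the intersection property, and suppose for contradiction that some amenable $(A,G)$-covering subset $X$ contains no trivial covering subset; then the invariant closed set $Y_0 := \{I \in \pr_1(X) : (I,\{e\}) \in X\}$ has ideals that do not cover $A$. Fix a partial $*$-representation $u : G \to I(A)$ as in Proposition~\ref{prop:vanishing-obstruction-partial-representation}, and for each $(I,H) \in X$ let $\phi_{(I,H)} : I(A) \times_r G \to \B(H_{(I,H)})$ be the map supplied by Lemma~\ref{lem:intermediate-map}. Assembling $\Theta := \bigoplus_{(I,H) \in X} \phi_{(I,H)}$, the restriction $\Theta|_A$ is a faithful representation of $A$ by the covering property, and the $G$-action on $X$ lifts (via $I(\alpha)_t(u_s) = u_{tst^{-1}}$) to unitary intertwiners permuting the summands, rendering $\Theta|_A$ equivariant for a suitable $G$-action on the target. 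The $G$-injectivity of $I_G(A)$ then extends the inverse $\Theta|_A^{-1} : \Theta(A) \to A \subseteq I_G(A)$ to a $G$-equivariant u.c.p.\ map $\pi$ on the whole target, so $\phi := \pi \circ \Theta|_{A \times_r G}$ is a pseudo-expectation for $(A,G)$. Using the failure of $Y_0$ to cover, one locates $a \in A$ and $s \neq e$ such that on every block $(I,H) \in X$ with $a \notin I$ one has $s \in H$; a computation with the relation $\phi_{(I,H)}(u_s^*\lambda_s) = \chi_H(s)\,1$, combined with Lemma~\ref{lem:equivariant-pseudo-expectation-ideal}, then produces a nonzero $x \in A \times_r G$ with $\phi(x^*x) = 0$, contradicting Theorem~\ref{thm:intersection-property-iff-every-equivariant-pseudo-expectation-faithful}.

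The main obstacle is the assembly step in the converse direction: equipping the direct sum $\Theta$ with a genuine $G$-equivariance so that the $G$-injectivity of $I_G(A)$ can be invoked, and then translating the combinatorial hypothesis ``no trivial covering subset'' into an explicit witness $x$ for non-faithfulness (an argument that should mirror the $\phi(q_s(u_s - \lambda_s)^*(u_s - \lambda_s)) = 0$ computation from the proof of Theorem~\ref{thm:vanishing-obstruction-intersection-property-iff-properly-outer}, but with the covering set in place of a single nonzero $q_s$). The forward direction is comparatively formal once $Z_\psi$ is set up correctly, but requires care in matching the quasi-stabilizers $G_z$ in $\Glimm(I_G(A))$ with the stabilizers $G_I$ in $\Glimm_p(A)$, which only agree on the dense $G_\delta$ set $L$ of Proposition~\ref{prop:continuous-dense-g-delta}.
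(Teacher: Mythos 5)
Your forward direction is essentially the paper's argument run in non-contrapositive form: the set $X$ built as the closure of $\{(A\cap z,G_z):z\in L\}$ over the dense $G_\delta$ set $L$ of Proposition~\ref{prop:continuous-dense-g-delta} is an amenable covering set, a trivial covering subset inside it forces (via Proposition~\ref{prop:quasi-glimm-ideals-covering}) $G_z=\{e\}$ for all $z\in L$, hence $\supp(q_s)\cap L=\emptyset$ and so $q_s=0$ for $s\neq e$; your detour through $\psi(\lambda_s)=c_sv_s$ is harmless since $c_s$ is supported under $q_s$. That half is fine.

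The converse is where the gaps are, and there are two. First, the assembly step. You propose to form $\Theta=\bigoplus_{(I,H)\in X}\phi_{(I,H)}$ and equip the target with a $G$-action by ``unitary intertwiners permuting the summands'' so that $G$-injectivity of $I_G(A)$ can be applied to $\Theta|_A^{-1}$. But each $\phi_{(I,H)}$ depends on a non-canonical choice of proper primal ideal $J$ with $A\cap J=I$ and of a representation of $I(A)/J$, and these choices cannot in general be made equivariantly (stabilizers of $(I,H)$ need not stabilize $J$, and even when they do one only gets unitary equivalence, not equality, of the conjugated blocks). This is exactly the problem that Section~7 of the paper is built to solve: instead of an equivariant direct sum, one passes to the set $\mathbf{R}$ of \emph{all} finite-dimensional compressions and unitary conjugates of the $\phi_{(I,H)}$ inside $MS(I(A)\times_rG)$ (which is manifestly invariant), takes its closed matrix convex hull $\bK$, checks via Webster--Winkler that $\bK$ restricts onto all of $MS(A)$, extracts a boundary by Corollary~\ref{cor:contains-boundary}, and obtains the pseudo-expectation from Theorem~\ref{thm:nc-boundary-correspondence-pseudo-expectation}. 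You cannot simply skip this machinery.

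Second, the endgame. The statement ``one locates $a\in A$ and $s\neq e$ such that on every block $(I,H)\in X$ with $a\notin I$ one has $s\in H$'' is too strong and is false in general: the failure of $X\cap(\Glimm_p(A)\times\{\{e\}\})$ to cover only gives a primitive ideal $P_0$ with $H\neq\{e\}$ whenever $P_0\supseteq I$, and the Chabauty topology plus compactness then yields only a \emph{finite} set $E\subseteq G\setminus\{e\}$ with $H_i\cap E\neq\emptyset$ eventually along nets converging over $P_0$ --- different blocks witness different elements of $E$. Consequently the witness element is $d=\sum_{s\in E}u_s^*\lambda_s$ with $\phi_{(I_i,H_i)}(d)=|E\cap H_i|\geq 1$, and what one concludes is that the resulting pseudo-expectation is \emph{non-canonical} (it sends $d$ to something of norm $\geq 1$ at the fiber over $P_0$, whereas the canonical expectation kills each $u_s^*\lambda_s$), not that it is non-faithful. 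A non-canonical pseudo-expectation need not itself have a kernel, so your plan to exhibit $x$ with $\phi(x^*x)=0$ directly does not go through. The paper instead chains: non-canonical pseudo-expectation $\Rightarrow$ $(I_G(A),G)$ not properly outer (Theorem~\ref{thm:properly-outer-implies-unique-pseudo-expectation}) $\Rightarrow$ no intersection property (Theorem~\ref{thm:vanishing-obstruction-intersection-property-iff-properly-outer}), and it is in this last step --- via the explicit non-faithful expectation of Lemma~\ref{lem:canonical-non-canonical-conditional-expectation} and the computation $\phi(q_s(u_s-\lambda_s)^*(u_s-\lambda_s))=0$ --- that the vanishing obstruction is actually consumed. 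Your outline uses vanishing obstruction only to invoke Proposition~\ref{prop:vanishing-obstruction-partial-representation}, which by itself is not enough to close the argument.
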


\begin{proof}
($\Rightarrow$)
Suppose $(A,G)$ does not have the intersection property. Then Theorem \ref{thm:vanishing-obstruction-intersection-property-iff-properly-outer} implies that its injective hull $(I_G(A),G)$ is not properly outer. Hence by Proposition \ref{prop:continuous-dense-g-delta}, there is a dense $G_\delta$ subset $L \subseteq \Glimm(I_G(A))$ with the property that $G_z \leq G_{A \cap z}$ for $z \in L$ and the stabilizer map 
\[
\Glimm(I_G(A)) \to \Sub(G) : z \to G_z
\]
is continuous on $L$. Furthermore, since $(I_G(A),G)$ is not properly outer, the set $\{z \in L : G_z \ne \{e\} \}$ is non-empty.

Let $Z \subseteq \Glimm(I_G(A)) \times \Sub(G)$ denote the closure of the set
\[
\{ (z, G_z) : z \in L \}.
\]
Since $\Glimm(I_G(A)) \times \Sub(G)$ is compact, $Z$ is compact. Hence since $L$ is dense in $\Glimm(I_G(A))$, $\pr_1(Z) = \Glimm(I_G(A))$.

First we claim that for $(z,H) \in Z$, $H \leq G_z$. To see this, let $\{z_i\}$ be a net in $L$ such that $\lim (z_i,G_z) = (z,H)$ for $z \in \Glimm(I_G(A))$ and $H \leq G$. For $s \in H$, the definition of convergence in the Chabauty topology implies that $s$ is eventually in $G_{z_i}$, so $z_i$ is eventually in $\supp(q_s)$. Since $\supp(q_s)$ is clopen, $z \in \supp(q_s)$. Hence $s \in G_z$ and $H \leq G_z$.

Next, we claim that for $z \in L$,
\[
\{H \in \Sub(G) : (z,H) \in Z \} = \{ G_z \}.
\]
To see this, observe that if $\{ z_i \}$ is a net in $L$ such that $\lim (z_i, G_{z_i}) = (z, H)$ for $H \leq G$, then it follows from the continuity of the stabilizer map on $L$ that $H = G_z$.

By Proposition \ref{prop:intersections-glimm-ideals-are-pseudo-glimm}, $A \cap z$ is a pseudo-Glimm ideal for every $z \in \Glimm(I_G(A))$. Define $X \subseteq \Glimm_p(A) \times \Sub_a(G)$ by
\[
X = \{ (A \cap z, H) : (z,H) \in Z \}.
\]
Since the intersection map from $\Glimm(I_G(A))$ to $\Glimm_p(A)$ is continuous in the strong topology and $Z$ is compact, $X$ is closed.

From above, for $(z,H) \in Z$, $H \leq G_z \leq G_{A \cap z}$, so Proposition \ref{prop:quasi-stabilizer-for-injective-is-amenable} implies that $H$ is amenable. Furthermore, since $\pr_1(Z) = \Glimm(I_G(A))$,
\[
\pr_1(X) = \{ A \cap z : z \in \Glimm(I_G(A)) \}.
\]
Therefore, Proposition \ref{prop:inclusion-pseudo-glimm-ideals} implies that $X$ is an amenable $(A,G)$-covering subset.

We claim $X$ does not contain a trivial $(A,G)$-covering subset. To see this, suppose for the sake of contradiction that $X' \subseteq X$ is a trivial covering subset of $\Glimm_p(A) \times \Sub(G)$. Then there is an invariant closed subset $F \subseteq \Glimm(I_G(A))$ such that $\cap_{z \in F} A \cap z = 0$ and
\[
X' = \{ (A \cap z, \{e\}) : z \in F \}.
\]
Proposition \ref{prop:quasi-glimm-ideals-covering} implies that $F = \Glimm(I_G(A))$. Hence
\[
\{ (z, \{e\}) : z \in \Glimm(I_G(A)) \} \subseteq Z.
\]
From above, $G_z = \{e\}$ for every $z \in L$, giving a contradiction.

($\Leftarrow$)
Suppose $(A,G)$ has vanishing obstruction and let
\[
X \subseteq \Glimm_p(A) \times \Sub(G)
\]
be an amenable $(A,G)$-covering subset that does not contain a trivial covering subset. Then there is a primitive ideal $P_0 \in \Prim(A)$ such that whenever $(I,H) \in X$ satisfies $P_0 \supseteq I$, then $H \ne \{e\}$. This implies there is a neighborhood $U_I \subseteq \Glimm_p(A)$ of $I$ and a neighborhood $V_I \subseteq \Sub(G)$ of $\{e\}$ such that whenever $(I',H') \in X$ satisfies $I' \in U_I$, then $H' \notin V_I$. By the definition of the Chabauty topology, there is a finite subset $E_I \subseteq G \setminus \{e\}$ such that $V_I = \{H \in \Sub(G) : H \cap E_I = \emptyset \}$.

The set $\{I \in \Glimm_p(A) : P_0 \supseteq I\}$ is closed and hence compact. From above it follows that there is a finite subset $E \subseteq G \setminus \{e\}$ such that whenever $\{(I_i,H_i)\}$ is a net in $X$ with $\lim (I_i,H_i) = (I,H)$ and $P_0 \supseteq I$, then eventually $H_i \cap E \ne \emptyset$.

For $(I,H) \in X$, let $\phi_{(I,H)} : I(A) \times_r G \to \B(H_{(I,H)})$ be defined as in Lemma \ref{lem:intermediate-map}. Define $d \in I(A) \times_r G$ by
\[
d = \sum_{s \in E} u_s^* \lambda_s.
\]
Then from above, whenever $\{(I_i,H_i)\}$ is a net in $X$ with $\lim(I_i,H_i) = (I,H) \in X$ and $P_0 \supseteq I$, eventually
\[
\phi_{(I_i,H_i)}(d) = \sum_{s \in E} \chi_{H_i}(s) = |E \cap H_i| \geq 1.
\]

Let $\bR = (R_n) \subseteq MS(I(A) \times_r G)$ denote the set of all possible matrix states obtained by compressing the maps in $\{\phi_{(I,H)} : (I,H) \in X \}$ to finite dimensional subspaces and conjugating by unitaries. Then $\bR$ is invariant. Let $\bK = (K_n) \subseteq MS(I(A) \times_r G)$ denote the closed matrix convex hull of $\mathbf{R}$. Then $\bK$ is also invariant, so $(\bK,G)$ is a matrix affine dynamical system.

By \cite{WW1999}*{Example 2.3} and \cite{A1969}*{Corollary 1.4.3}, the matrix extreme points of the matrix state space $MS(A)$ of $A$ are precisely the compressions of irreducible representations of $A$ to finite dimensional subspaces. Since $X$ is covering, it follows from the construction of the maps $\phi_{(I,H)}$ for $(I,H) \in X$ that restricting to $A$ maps $\bR$ onto a subset of $MS(A)$ that contains all the matrix extreme points of $MS(A)$. Hence by \cite{WW1999}*{Theorem 4.3}, restricting to $A$ maps $\bK$ onto $MS(A)$. It follows from Corollary \ref{cor:contains-boundary} that $(\bK,G)$ contains a boundary, say $(\bK',G)$ of $(MS(A),G)$. Write $\bK' = (K'_n)$. 

Let $\alpha$ be a pure state on $A$ with GNS representation $\pi_\alpha : A \to \B(H)$ satisfying $\ker \pi_\alpha = P_0$. Suppose $\mu \in R_1$ satisfies $\mu|_A = \alpha$. Then from above there is a net $(\mu_i)$ in $R_1$ and a net $((I_i,H_I))$ in $X$ such that $\mu_i$ is a compression of a unitary conjugate of $\phi_{(I_i,H_i)}$ and $\lim \mu_i = \mu$. By passing to a subnet we can assume $\lim (I_i,H_i) = (I,H) \in X$. Then for $a \in I$ and $b,c \in A$, $\lim \|\phi_{(I_i,H_i)}(bac)\| = \|bac + I_i\| = 0$. Hence
\[
|\alpha(bac)| = \lim |\alpha_i(bac)| \leq \lim \|\phi_{(I_i,H_i)}(bac)\| = 0,
\]
implying $a \in \ker \pi_\alpha = P_0$. Therefore, $P_0 \supseteq I$. From above, eventually $\phi_{(I_i,H_i)}(d) \geq 1$. Hence $\alpha(d) = \lim \alpha_i(d) \geq 1$.

By \cite{WW1999}*{Theorem 4.6}, every $\mu \in K_1$ is a limit of convex combinations of states in the closure of $R_1$. If $\mu|_A = \alpha$, then since $\alpha$ is pure, $\mu$ is a limit of convex combinations of states in the closure of $R_1$ that restrict to $\alpha$. Hence from above, $\mu(d) \geq 1$. It follows that the boundary $\bK'$ is non-canonical. 

By Theorem \ref{thm:nc-boundary-correspondence-pseudo-expectation}, there is a non-canonical pseudo-expectation for $(A,G)$, so it follows from Theorem \ref{thm:properly-outer-implies-unique-pseudo-expectation} that $(I_G(A),G)$ is not properly outer. By Theorem \ref{thm:vanishing-obstruction-intersection-property-iff-properly-outer}, we conclude that $(A,G)$ does not have the intersection property.
\end{proof}

\begin{cor}
Let $(A,G)$ be a C*-dynamical system such that $A$ is prime and let $G_A = \{s \in G : \alpha_s \text{ is quasi-inner} \}$. If every amenable $(A,G)$-covering subset of $\Glimm_p(A) \times \Sub(G_A)$ contains a trivial $(A,G)$-covering subset, then $(A,G)$ has the intersection property. If $(A,G)$ has vanishing Mackey obstruction $H^2(G_A,\mathbb{T})$, then the converse is also true.
\end{cor}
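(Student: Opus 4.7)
The plan is to reduce the corollary directly to Theorem \ref{thm:main} together with Example \ref{ex:partial-cohomology-2}, by establishing that when $A$ is prime the quasi-stabilizer $G_I$ is independent of the pseudo-Glimm ideal $I$ and in fact equals $G_A$. Once this is in hand, the projection onto $\Sub(G)$ of any $(A,G)$-covering subset $X \subseteq \Glimm_p(A) \times \Sub(G)$ automatically lies in $\Sub(G_A)$, so replacing the ambient subgroup space $\Sub(G)$ by $\Sub(G_A)$ does not change the collection of amenable $(A,G)$-covering subsets, and the hypothesis of the corollary matches the hypothesis of Theorem \ref{thm:main} verbatim.

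To establish $G_I = G_A$, I would first invoke \cite{H1981}*{Theorem 7.1} together with \cite{H1982b} (the same facts cited in Example \ref{ex:partial-cohomology-2}) to conclude that the injective hull $I(A)$ has trivial center whenever $A$ is prime. Since the invariant projection $p_s \in I(A)$ singled out by Proposition \ref{prop:largest-projection-inner-outer} is central, it must equal $0$ or $1$, corresponding respectively to $\alpha_s$ being properly outer or quasi-inner. Therefore $1-p_s \in \{0,1\}$, and the condition characterizing membership in $G_I$, namely that $1-p_s$ lie in every proper primal ideal $J$ of $I(A)$ with $A \cap J = I$, collapses to $p_s = 1$, i.e.\ $s \in G_A$. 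Moreover, Theorem \ref{thm:prop-outer-implies-ess-prop-outer} gives $I(\alpha)_t(p_s) = p_{tst^{-1}}$, so $G_A$ is normal in $G$ and $\Sub(G_A)$ is a closed $G$-invariant subset of $\Sub(G)$, as needed to make sense of the action on $\Glimm_p(A) \times \Sub(G_A)$.

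Given this identification, condition (2) of Definition \ref{defn:uniformly-recurrent-subset} forces $\pr_2(X) \subseteq \Sub(G_A)$ for every $(A,G)$-covering set $X$, so the amenable $(A,G)$-covering subsets of $\Glimm_p(A) \times \Sub(G)$ are literally the amenable $(A,G)$-covering subsets of $\Glimm_p(A) \times \Sub(G_A)$, and similarly for the trivial ones. The forward implication of the corollary is then an immediate consequence of the forward implication of Theorem \ref{thm:main}. For the converse, Example \ref{ex:partial-cohomology-2} asserts that when $A$ is prime, $(A,G,\alpha)$ has vanishing obstruction if and only if the Mackey obstruction $H^2(G_A,\mathbb{T})$ vanishes; applying the converse direction of Theorem \ref{thm:main} finishes the argument. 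There is no real obstacle here beyond verifying that the reduction $\Sub(G) \rightsquigarrow \Sub(G_A)$ is tautological under primeness, which is precisely what the constancy $G_I = G_A$ provides.
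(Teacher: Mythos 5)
Your proposal is correct and follows essentially the same route as the paper: identify $G_I = G_A$ for all pseudo-Glimm ideals $I$ using the triviality of $Z(I(A))$ for prime $A$ (so that each central projection $p_s$ is $0$ or $1$), note via Example \ref{ex:partial-cohomology-2} that vanishing obstruction is equivalent to vanishing of the Mackey obstruction $H^2(G_A,\mathbb{T})$, and apply Theorem \ref{thm:main}. You in fact supply more detail than the paper does on why $G_I = G_A$ and why $\Sub(G_A)$ is a closed invariant subset of $\Sub(G)$.
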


\begin{proof}
By assumption, for every pseudo-Glimm ideal $I \in \Glimm_p(A)$, $G_I = G_A$. Since $A$ is prime, it follows as in Example \ref{ex:partial-cohomology-2} that $(A,G)$ has vanishing obstruction if and only if the Mackey obstruction $H^2(G_A,\mathbb{T})$ vanishes. Hence the result follows from Theorem \ref{thm:main}. 
\end{proof}

\subsection{Intrinsic characterization in the minimal case}

The analysis required to apply Theorem \ref{thm:main} is greatly simplified when $(A,G)$ is minimal, i.e. when $A$ has no non-trivial invariant ideals. In this case, $(A,G)$ has the intersection property if and only if the reduced crossed product $A \times_r G$ is simple.

\begin{thm} \label{thm:main-minimal}
Let $(A,G)$ be a minimal C*-dynamical system. Suppose that for every pseudo-Glimm ideal $I \in \Glimm_p(A)$ and every amenable subgroup $H \leq G_I$, there is a net $(s_i)$ in $G$ such that $\lim s_i H s_i^{-1} = \{e\}$ in the Chabauty topology. Then the reduced crossed product $A \times_r G$ is simple. If $(A,G)$ has vanishing obstruction, then the converse is also true.
\end{thm}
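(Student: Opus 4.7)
The strategy is to derive Theorem~\ref{thm:main-minimal} from Theorem~\ref{thm:main}: minimality of $(A,G,\alpha)$ forces the intersection property to coincide with simplicity of $A \times_r G$, so it is enough to verify that, in the minimal setting, the hypothesis stated here is equivalent to the covering-subset condition appearing in Theorem~\ref{thm:main}.

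For the forward direction, I would let $X \subseteq \Glimm_p(A) \times \Sub(G)$ be any amenable $(A,G,\alpha)$-covering subset and construct a trivial covering subset inside $X$. Given $(I,H) \in X$, the subgroup $H$ is amenable and contained in $G_I$, so by hypothesis there is a net $(s_i)$ in $G$ with $s_i H s_i^{-1} \to \{e\}$. Compactness of $\Glimm_p(A) \times \Sub(G)$, together with closedness and $G$-invariance of $X$, allows a subnet to converge to some $(I',\{e\}) \in X$. Hence $X' := X \cap (\Glimm_p(A) \times \{\{e\}\})$ is a non-empty, closed, $G$-invariant subset whose second projection is $\{\{e\}\}$. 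The remaining issue is to verify that $\pr_1(X')$ covers $A$: the intersection $\bigcap_{I \in \pr_1(X')} I$ is a proper $G$-invariant ideal of $A$, hence zero by minimality, and this vanishing intersection then lifts to the genuine covering condition by passing through the continuous $G$-equivariant map $z \mapsto A \cap z$ from $\Glimm(I_G(A))$ to $\Glimm_p(A)$, using the $G$-equivariant essentiality of $A \subseteq I_G(A)$ together with Proposition~\ref{prop:quasi-glimm-ideals-covering}. Theorem~\ref{thm:main} then delivers the intersection property.

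For the converse, assume $(A,G,\alpha)$ has vanishing obstruction and $A \times_r G$ is simple, and argue by contrapositive. Should the hypothesis of Theorem~\ref{thm:main-minimal} fail, there would exist $I_0 \in \Glimm_p(A)$ and an amenable $H_0 \leq G_{I_0}$ such that $\{e\} \notin \overline{\{s H_0 s^{-1} : s \in G\}}$ in the Chabauty topology. Take $X := \overline{G \cdot (I_0, H_0)} \subseteq \Glimm_p(A) \times \Sub(G)$. This is a closed, $G$-invariant, amenable subset (since $\Sub_a(G)$ is Chabauty-closed), and the same minimality plus Proposition~\ref{prop:quasi-glimm-ideals-covering} argument as in the forward direction shows that $X$ is $(A,G,\alpha)$-covering. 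By the choice of $H_0$, no point of $X$ has second coordinate $\{e\}$, so $X$ contains no trivial covering subset. Theorem~\ref{thm:main} then produces a failure of the intersection property, contradicting the assumed simplicity.

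The main obstacle in both directions is the step that promotes the scalar identity $\bigcap_{I \in \pr_1(X')} I = 0$ (an immediate consequence of minimality) to the set-theoretic covering condition $\bigcup_{I \in \pr_1(X')} \hull(I) = \Prim(A)$. This upgrade is exactly where the pseudo-Glimm machinery developed in Section~\ref{sec:injectivity-and-essentiality}, and in particular Proposition~\ref{prop:quasi-glimm-ideals-covering} paired with the $G$-equivariant essentiality of $A \subseteq I_G(A)$, pays off; without it, minimality alone only yields a dense family of hulls in $\Prim(A)$, which is insufficient for covering.
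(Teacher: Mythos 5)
Your overall strategy --- reduce both directions to Theorem~\ref{thm:main} after noting that minimality makes the intersection property equivalent to simplicity --- is exactly the paper's, and your forward direction matches the paper's in outline (the paper takes the orbit closure of a single $(I',\{e\})$ where you take all of $X\cap(\Glimm_p(A)\times\{\{e\}\})$; either works modulo the covering issue below). However, there are two genuine gaps.

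The serious one is in the converse. You set $X=\overline{G\cdot(I_0,H_0)}\subseteq\Glimm_p(A)\times\Sub(G)$ and assert it is $(A,G)$-covering, but you never verify condition (2) of Definition~\ref{defn:uniformly-recurrent-subset}, namely that $H'\leq G_{I'}$ for every \emph{limit point} $(I',H')$ of the orbit. This is not automatic. If $(s_iI_0,s_iH_0s_i^{-1})\to(I',H')$ and $s\in H'$, then $s$ eventually lies in $G_{s_iI_0}$, and what survives the limit is only the strongly closed condition $I'\supseteq A\cap I(A)(1-p_s)$. To conclude $s\in G_{I'}$ one needs $1-p_s\in J'$ for \emph{every} proper primal ideal $J'$ of $I(A)$ with $A\cap J'=I'$, and this can fail exactly when $I'$ also contains $A\cap I(A)p_s$ --- the exceptional set analysed in Lemma~\ref{lem:glimm-map-empty-interior}, which is closed with empty interior but in general nonempty. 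In other words, the quasi-stabilizer map is only generically continuous on the relevant space (Proposition~\ref{prop:continuous-dense-g-delta}), so orbit closures in $\Glimm_p(A)\times\Sub(G)$ need not respect condition (2). The paper sidesteps this by lifting to $(I(A),G)$: it chooses a proper primal ideal $J$ of $I(A)$ with $A\cap J=I_0$, forms the orbit closure of $(J,H_0)$ in $\Glimm_p(I(A))\times\Sub(G)$, where the condition $1-p_s\in J$ \emph{is} closed in the strong topology, and applies Theorem~\ref{thm:main} to $(I(A),G)$, which is minimal by \cite{H1985}*{Proposition 6.4}, inherits the intersection property by Theorem~\ref{thm:intersection-property-iff-extensions-have-it}, and has vanishing obstruction since $I(I(A))=I(A)$. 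Your argument needs this detour or a substitute for it.

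The second gap is the step you flag yourself: promoting $\bigcap_{I\in\pr_1(X')}I=0$ to the covering condition $\bigcup_{I\in\pr_1(X')}\hull(I)=\Prim(A)$. Proposition~\ref{prop:quasi-glimm-ideals-covering} is not the right tool: it concerns closed invariant subsets of $\Glimm(I_G(A))$ and concludes only that such a subset is all of $\Glimm(I_G(A))$; it says nothing about hulls in $\Prim(A)$, and the ideals in $\pr_1(X')$ are arbitrary pseudo-Glimm ideals, not of the form $A\cap z$ for $z\in\Glimm(I_G(A))$. What is actually needed is that a \emph{strongly compact} family $Y$ of ideals with zero intersection covers $A$: if a primitive ideal $P$ contained no member of $Y$, one could pick for each $I\in Y$ a positive $a_I\in I$ with $\|a_I+P\|=1$, extract a finite subcover of $Y$ by the open sets $\{J:\|a_I+J\|<1/2\}$, and use the elements $(a_{I_k}-1/2)_+$ to produce finitely many ideals, none contained in $P$, whose product lies in every member of $Y$ and hence is zero, contradicting primeness of $P$. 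Compactness of the orbit closure is essential here; zero intersection alone only gives a dense union of hulls, as you correctly note.
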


\begin{proof}
($\Rightarrow$)
Suppose that for every $I \in \Glimm_p(A)$ and amenable subgroup $H \leq G_I$, there is a net $(s_i)$ in $G$ such that $\lim s_i H s_i^{-1} = \{e\}$. Let $X \subseteq \Glimm_p(A) \times \Sub(G)$ be an amenable $(A,G)$-covering subset. Fix $(I,H) \in X$. By assumption there is a net $(s_i)$ in $G$ such that $\lim s_i H s_i^{-1} = \{e\}$. By compactness, the net $(s_n I, s_n H s_n^{-1})$ has a convergent subnet. Thus there is $(I',\{e\}) \in X$. Since $I'$ is proper, it follows from the minimality of $(A,G)$ that the closure of the subset $\{(sI',\{e\}) : s \in G\} \subseteq X$ is covering. Hence every amenable covering subset of $\Glimm_p(A) \times \Sub(G)$ contains a trivial $(A,G)$-covering subset. By Theorem \ref{thm:main}, $(A,G)$ has the intersection property.

($\Leftarrow$)
Suppose $(A,G)$ has vanishing obstruction and the intersection property. Then it follows from Theorem \ref{thm:vanishing-obstruction-intersection-property-iff-properly-outer} that $(I(A),G)$ also has the intersection property. For $I \in \Glimm_p(A)$ and an amenable subgroup $H \leq G_I$, let $J$ be a proper primal ideal in $I(A)$ such that $I = A \cap J$. Let $Y \subseteq \Glimm_p(I(A)) \times \Sub(G)$ denote the closure of the orbit of $(J,H)$. Since $s \in G_I$, $s \in G_J$, so for $(J',H') \in Y$, an argument similar to the proof of Proposition \ref{prop:continuous-dense-g-delta} implies that $H' \leq G_{J'}$. Furthermore, since $(A,G)$ is minimal, \cite{H1985}*{Proposition 6.4} implies that $(I(A),G)$ is also minimal. Hence $Y$ is an amenable $(I(A),G)$-covering subset. Theorem \ref{thm:main} now implies that $Y$ contains a trivial $(I(A),G)$-covering subset. It follows that there is a net $(s_i)$ in $G$ such that $\lim s_i H s_i^{-1} = \{e\}$.
\end{proof}

\begin{cor}
Let $(A,G)$ be a minimal C*-dynamical system such that $A$ is prime and let $G_A = \{s \in G : \alpha_s \text{ is quasi-inner} \}$. If for every amenable subgroup $H \leq G_A$, there is a net $(s_i)$ in $G$ such that $\lim s_i H s_i^{-1} = \{e\}$ in the Chabauty topology, then the reduced crossed product $A \times_r G$ is simple. If $G_A$ has vanishing Mackey obstruction $H^2(G_A,\mathbb{T})$, then the converse is also true.
\end{cor}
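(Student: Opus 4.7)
The plan is to deduce this corollary directly from Theorem~\ref{thm:main-minimal} by exploiting primeness to collapse all the quasi-stabilizer data to the single subgroup $G_A$.

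The key preliminary observation is that when $A$ is prime, the injective envelope $I(A)$ has trivial center, as recorded in Example~\ref{ex:partial-cohomology-2} via the results of Hamana. Consequently, for each $s \in G$ the central projection $p_s \in Z(I(A))$ described in Proposition~\ref{prop:largest-projection-inner-outer} must be either $0$ or $1$, so by Theorem~\ref{thm:inj-env-characterization-quasi-inner-properly-outer} every $\alpha_s$ is either properly outer or quasi-inner. If $s \in G_A$ then $p_s = 1$ and $1-p_s = 0$ lies in every ideal of $I(A)$; if $s \notin G_A$ then $p_s = 0$ and $1-p_s = 1$ lies in no proper ideal of $I(A)$. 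Combined with the definition of the quasi-stabilizer, this gives $G_I = G_A$ for every pseudo-Glimm ideal $I \in \Glimm_p(A)$.

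With this identification in hand, both directions follow immediately from Theorem~\ref{thm:main-minimal}. For the forward direction, suppose that for every amenable subgroup $H \leq G_A$ there is a net $(s_i)$ in $G$ with $\lim s_i H s_i^{-1} = \{e\}$. Since $G_I = G_A$ for every $I \in \Glimm_p(A)$, the hypothesis of Theorem~\ref{thm:main-minimal} is automatically satisfied, and hence $A \times_r G$ is simple. For the converse, assume $G_A$ has vanishing Mackey obstruction $H^2(G_A,\mathbb{T})$; by Example~\ref{ex:partial-cohomology-2}, $(A,G)$ has vanishing obstruction (the partial cohomology group collapses to the Mackey obstruction because $Z(I(A)) = \mathbb{C}$). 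If $A \times_r G$ is simple, then $(A,G)$ has the intersection property, and Theorem~\ref{thm:main-minimal} provides, for every pseudo-Glimm ideal $I$ and every amenable $H \leq G_I$, a net $(s_i)$ in $G$ with $\lim s_i H s_i^{-1} = \{e\}$. Taking any fixed $I \in \Glimm_p(A)$ (which exists because the unique Glimm ideal $0$ of $A$ is contained in some pseudo-Glimm ideal by Lemma~\ref{lem:inclusion-pseudo-glimm-ideals}), the equality $G_I = G_A$ yields the desired net for every amenable $H \leq G_A$.

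There is no genuinely hard step here: the corollary is essentially a repackaging of Theorem~\ref{thm:main-minimal} under the primeness assumption. The only substantive piece is the reduction $G_I = G_A$, which rests entirely on the fact that $Z(I(A)) = \mathbb{C}$ for prime $A$ and thus $p_s \in \{0,1\}$ for every $s \in G$.
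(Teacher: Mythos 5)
Your proof is correct and follows the same route as the paper: reduce to Theorem~\ref{thm:main-minimal} by observing that primeness forces $Z(I(A))=\bC$, hence $p_s\in\{0,1\}$, hence $G_I=G_A$ for every pseudo-Glimm ideal, and that the vanishing-obstruction hypothesis collapses to the Mackey obstruction as in Example~\ref{ex:partial-cohomology-2}. The paper's own proof is a two-line citation of that example and theorem; you have merely spelled out the $G_I=G_A$ reduction that the paper leaves implicit.
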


\begin{proof}
Since $A$ is prime, it follows as in Example \ref{ex:partial-cohomology-2} that $(A,G)$ has vanishing obstruction if and only if the Mackey obstruction $H^2(G_A,\mathbb{T})$ vanishes. Hence the result follows from Theorem \ref{thm:main-minimal}. 
\end{proof}


\end{document}